\numberwithin{equation}{section}
\newtheorem{theorem}{Theorem}[section]
\newtheorem{cor}[theorem]{Corollary}
\newtheorem{lemma}[theorem]{Lemma}
\newtheorem{prop}[theorem]{Proposition}
\newtheorem{defi}[theorem]{Definition}
\newtheorem{ex}[theorem]{Example}
\newcommand{\N}{{\mathbb N}}
\newcommand{\X}{{\mathcal X}}
\newcommand{\E}{{\mathcal E}}
\renewcommand{\P}{{\mathcal P}}
\begin{document}

\title[Stick-breaking, clumping, and Markov chains]
{Stick-breaking processes, clumping, and Markov chain occupation laws}

\author{Zach Dietz, William Lippitt, Sunder Sethuraman}
\date{}

\address{\noindent Zach Dietz:
\newline
e-mail:  \rm \texttt{zedietz@yahoo.com}
}

\address{\noindent William Lippitt: Department of Mathematics, University of Arizona,
  Tucson, AZ  85721
\newline
e-mail:  \rm \texttt{wlippitt@math.arizona.edu}
}

\address{\noindent Sunder Sethuraman: Department of Mathematics, University of Arizona,
  Tucson, AZ  85721
\newline
e-mail:  \rm \texttt{sethuram@math.arizona.edu}
}

\begin{abstract}
 We consider the connections among `clumped' residual allocation models (RAMs), a general class of stick-breaking processes including Dirichlet processes,
and the occupation laws of certain discrete space time-inhomogeneous Markov chains related to simulated annealing and other applications.  An intermediate structure is introduced in a given RAM, where proportions between successive indices in a list are added or clumped together to form another RAM.  
In particular, when the initial RAM is a Griffiths-Engen-McCloskey (GEM) sequence and the indices are given by the random times that an auxiliary Markov chain jumps away from its current state, 
the joint law of the intermediate RAM and the locations visited in the sojourns is given in terms of a `disordered' GEM sequence, and an induced Markov chain.  
Through this joint law, we identify a large class of `stick breaking' processes as the limits of empirical occupation measures for associated time-inhomogeneous Markov chains.
\end{abstract}

\subjclass[2010]{60G57, 60E99, 60J10}

\keywords{residual allocation model, RAM, GEM, Dirichlet, inhomogeneous, Markov, stick breaking, occupation, empirical, clumping}

\maketitle

\section{Introduction and summary}

In this article, we introduce an intermediate `clumped' structure in residual allocation models of apportionment of a resource, such as Griffiths-Engen-McCloskey (GEM) models.  Although this intermediate structure is perhaps of its own interest, through it, we identify the empirical occupation law limits in a class of time-inhomogeneous discrete space Markov chains, associated with simulated annealing and other applications, as new types of stick-breaking processes built from Markovian samples, including Dirichlet processes.  On the one hand, GEM models and Dirichlet processes have wide application in population genetics, ecology, combinatorial stochastic processes, and Bayesian nonparametric statistics; see books and surveys \cite{Crane1}, \cite{Crane2},  \cite{Ghosal_vandeVaart}, \cite{Ghosh_Rama}, \cite{Hjort}, \cite{Pitman} and references therein. On the other hand, the time-inhomogeneous Markov chains that we consider are stylized models of simulated annealing and Gibbs samplers or types of mRNA dynamics; see \cite{Bouguet}, \cite{DS}, \cite{Englander}, \cite{Gantert}, \cite{Herbach},  \cite{Winkler}.  In a sense, one purpose of the paper is to observe a perhaps unexpected connection between these apriori different objects.

We now discuss some of the relevant background on GEM and Dirichlet measures, and time-inhomogeneous Markov chains, before turning to an informal discussion of our results on the intermediate structure in GEM sequences and their connections with the occupation laws of the Markov chains.

\subsection
{GEM and Dirichlet measures} Consider the infinite-dimensional simplex $\Delta_\infty$
of all
all discrete (probability) distributions on $\N=\{1,2,\ldots\}$. 
A residual allocation model (RAM) is a distribution on $\Delta_\infty$, introduced in the 1940's \cite{Halmos} as a means to address problems of apportionment:
Let $\{X_n\}_{n\geq 1}$ be independent $[0,1]$-valued random variables, called `residual fractions'.
Consider the associated process $\langle P_n: n\geq 1\rangle \in [0,1]^{\N}$, given by $P_1=X_1$ and
$$P_n = \left(1 -
\sum_{j=1}^{n-1}P_j\right)X_n = (1-X_1)\cdots (1-X_{n-1})X_n\ \ {\rm for \ }n\geq 2;$$
see Lemma \ref{oneidentity} for the induction leading to the last equality. If $\sum_{n\geq 1}P_n \stackrel{a.s.}{=} 1$, the distribution $\langle P_n: n\geq 1\rangle \in \Delta_\infty$ is the associated RAM.  In general, $\langle P_n: n\geq 1\rangle$ need not sum to $1$ for a given realization.  We note a simple condition equivalent to $\sum_{n\geq 1}P_n\stackrel{a.s.}{=}1$ is that $\prod_{j=1}^\infty(1-X_j)\stackrel{a.s.}{=}0$, the case for nontrivial, independent, identically distributed (iid) fractions (cf. Lemma \ref{oneidentity}).

The RAM when the fractions $\{X_n\}_{n\geq 1}$ are iid Beta$(1,\theta)$ random variables is the well-known Griffiths-Engen-McCloskey GEM$(\theta)$ model. 
There are many characterizations and studies of the GEM sequence and its variants in recent years.  For instance, the GEM model is the unique RAM with iid fractions that is invariant in law under size-biased permutation.   Also, the GEM sequence is the unique invariant measure of `split and merge' dynamics.  In addition, there are important connections with Poisson-Dirichlet models.  See for instance, among others, \cite{Arratia}, \cite{Arratia2}, \cite{Diaconis}, \cite{Engen},  \cite{Gnedin}, \cite{Hoppe}, \cite{Kingman1}, \cite{Kingman2}, \cite{McCloskey}, \cite{Patil_Taillie}, \cite{Pitman1}, \cite{Pitman2}, \cite{Pitman_Y}, and references therein.

Moreover, the GEM 
sequence is a fundamental building block of Dirichlet processes, which often serve as a measure on priors in Bayesian nonparametric statistics \cite{Ghosal_vandeVaart},  \cite{Ghosh_Rama}.  With respect to a measurable space $(\mathcal{X}, \mathcal{B})$, consider the space of probability measures $\mathbb{P}_{\mathcal{X}}$ endowed with $\sigma$-field generated by the sets $\{P: P(A)<r\}$ for $A\in \mathcal{B}$ and $r>0$.  We say that $D$ is a random probability sample from the Dirichlet process, with `parameters' $\theta>0$ and probability measure $\mu$ on $\mathcal{X}$, if for any finite partition $\{A_i\}_{i=1}^m$ that the vector $\langle D(A_1),\ldots, D(A_m)\rangle$ has the Dirichlet distribution with parameters $\langle\theta\mu(A_i): 1\leq i\leq m\rangle$.  

The `stick breaking' representation of the Dirichlet process with parameters $(\theta, \mu)$, in terms of a GEM$(\theta)$ sequence $\langle P_i: i\geq 1\rangle$, and an independent sequence of iid random variables $\{Z_i\}_{i\geq 1}$ with common distribution $\mu$, is given by
\begin{equation}
\label{Dirichlet_intro}
D(\cdot; \theta, \mu)= \sum_{i=1}^\infty P_i \delta_{Z_i}(\cdot).
\end{equation}
There is a large literature on Dirichlet processes stemming from the seminal works \cite{Blackwell_MacQueen}, \cite{Ferguson}.  See \cite{Pitman2}, \cite{JSethuraman} with respect to the `stick breaking' construction, and books \cite{Ghosal_vandeVaart}, \cite{Ghosh_Rama}, \cite{Muller}, \cite{Pitman} for more on their history, other representations including that with respect to the `Chinese restaurant process', and their use in practice.

In this article, we will concentrate on discrete spaces $\mathcal{X}\subset \N$, that is those composed of either a finite or a countably infinite number of elements.   We note, when $\mathcal{X}=\{1,\ldots k\}$ is finite, $\mu = \langle \mu(1),\ldots, \mu(k)\rangle$ and and $A_i = \{i\}$ for $1\leq i\leq k$, the property that $\langle D(A_1),\ldots, D(A_k)\rangle$ is given by a Dirichlet distribution was first stated in a population genetics context in \cite{Donnelly_Tavare}; see also 
\cite{Hirth}.

\subsection{Time-inhomogeneous Markov chains} Let $G$ be a generator kernel on $\mathcal{X}$, that is $G_{i,j} \geq 0$ for $i\neq j\in\mathcal{X}$, and $G_{i,i} = -\sum_{j\neq i} G_{i,j}$.  Suppose the entries of $G$ are suitably bounded so that the kernel 
\begin{equation}
\label{Q_n}
K_n = I + \frac{G}{n}
\end{equation}
 is a stochastic kernel for all $n$ large enough, and set $K_n=I$ otherwise.  Let $\{T_n\}_{n\geq 1}$ be the time-inhomogeneous Markov chain on the discrete space $\mathcal{X}$ associated to kernels $\{K_n\}_{n\geq 1}$.  Consider $G$ without zero rows.  Then, every point in $\mathcal{X}$ represents a valley from which the chain rarely but almost surely exits to enter another point valley.  In this way, a certain `landscape' is explored.  The chain can be considered as a simplified model of simulated annealing or metastability (cf. \cite{Bovier}, \cite{Gantert}, \cite{Landim}, \cite{OV}, \cite{Winkler}).  From another view, continuous-time variants of such inhomogeneous chains have been used in the modeling of certain mRNA dynamics \cite{Herbach}.

Interestingly, for finite $\mathcal{X}$, it was noted in \cite{Gantert} and \cite{Winkler} that the sample means of these chains do not converge a.s. or in probability, as would be the case for a homogeneous Markov chain.  For generators $G$ without zero entries, weak convergence to an empirical occupation law
\begin{equation}
\label{DS_limits}
\nu = \lim_{n\rightarrow\infty}  \left\langle \frac{1}{n}\sum_{j=1}^n \delta_{T_j}(i): i\in \mathcal{X}\right\rangle,
\end{equation}
was identified by computing its moments in \cite{DS}.  Curiously, when $G$ is of the form $G= \theta(Q - I)$ for $\theta>0$ and $Q$ a stochastic matrix with constant rows $\mu$,  it was also shown that $\nu$ is a Dirichlet distribution with parameters $\{\theta \mu(i)\}_{i=1}^k$ by matching the moments.    
Similar occupation laws were also derived in the continuous-time mRNA model in \cite{Herbach} 
as the stationary distributions of a promoter process on $k$ states, influencing levels of mRNA production.  

In this context, part of our motivation is to understand this limit and its generalizations more constructively (Theorem \ref{occ cor}).

\subsection{Clumped structure and generalized `stick-breaking' processes}  We now describe a class of generalized stick-breaking processes.  
Let $\langle P_i: i\geq 1\rangle$ be a GEM$(\theta)$ sequence and, to be focused, let $\{T_i'\}_{i\ge 1}$ be an independent Markov chain with irreducible, recurrent transition kernel $Q$ on a discrete space $\mathcal{X}$ with initial distribution $\pi$, although we also consider more general Markov chains, not necessarily irreducible or composed only of recurrent states, in several of our results.

Another motivation of ours is to understand the random measures
\begin{equation}
\label{generalF}
\nu(\cdot; \theta, \mu, Q) = \sum_{i=1}^\infty P_i \delta_{T_i'}(\cdot),
\end{equation}
seen as a natural generalization of stick-breaking representation of the Dirichlet process, with respect to Markovian samples $\{T'_i\}_{i\geq 1}$ instead of the iid ones in \eqref{Dirichlet_intro}.  

In general, $\nu$ is not exchangeable in the sense that the GEM sequence $\langle P_i: i\geq 1\rangle$ may not be replaced by an arbitrary permutation without changing the measure.  In contrast, when $\{T_i'\}_{i\geq 1}$ is iid and $\nu$ is the Dirichlet process, such an exchangeability property holds; for example, the Poisson-Dirichlet order statistics $\langle \hat P_i: i\geq 1\rangle$ of $\langle P_i: i\geq 1\rangle$ may be used instead without changing the Dirichlet process (cf. \cite{Pitman2}). 
We also note that other generalizations of Dirichlet processes have been considered, among them, Polya tree \cite{Lavine}, Pitman-Yor \cite{Pitman2}, \cite{Pitman_Yor}, and Beta processes \cite{BJP}.

We now introduce a clumped intermediate structure which will help analyze $\nu$.  Suppose $\{V_i\}_{i\geq 1}$ are the times when the Markov chain jumps to a different state with the convention $V_1=1$.  In particular, `skip-repetition' is allowed:  The chain can begin in state $x$, jump to $y\neq x$ at time $V_2$, and then may jump back at time $V_3$ into state $x$.  We note that these times are not only those times when a state is observed for the first time, as used in the definition of size-biased permutations.

Consider $R_i = \sum_{j=V_i}^{V_{i+1}-1} P_j$ for $i\geq 1$.   We show that (cf. Theorems \ref{RAM} and \ref{Gem to MCcGEM}), conditional on the locations $\{Y_i = T_{V_i}'\}_{i\geq 1}$, the sequence $\langle R_i: i\geq 1\rangle$ is a RAM where the associated fractions are Beta$\big(1,\theta (1-Q_{Y_i,Y_i})\big)$ for $i\geq 1$, a sort of `disordered' GEM.  Also, the law of $\{Y_i\}_{i\geq 1}$ can be computed as another Markov chain on $\mathcal{X}$ with a transition kernel found in terms of $Q$.  We will call the joint law of $\big(\langle R_i: i\geq 1\rangle, \{Y_i\}_{i\geq 1}\big)$ as a type of Markov Chain conditional GEM, or `MCcGEM' distribution.

In terms of the clumped intermediate structure, we see that 
\begin{equation}
\label{clumpedF}
\nu(\cdot) = \sum_{i=1}^\infty R_i \delta_{Y_i}(\cdot).
\end{equation}
This representation will allow us to identify $\nu$ as the limit of occupation laws of a matched time-inhomogeneous Markov chain (Theorems \ref{occ cor}, \ref{occ cor2}).

  We will also see that $\nu$ satisfies a `self-similarity' equation (cf. Theorem \ref{selfsimgem}), uniquely characterizing its distribution.  This equation is reminiscent of the regenerative structure present in `stick-breaking'  \cite{JSethuraman}, in integral constructions of the Dirichlet processs \cite{Last}, \cite{SL}, and in other related settings \cite{Gnedin_Pitman1}, \cite{Gnedin_Pitman2}.  
  
  Moreover, when $\X$ is finite, we discuss the joint moments of the distribution in Theorem \ref{moments_thm}.  Although a formula for the moments is given in \cite{DS}, the description in Theorem \ref{moments_thm} is more detailed, allowing identification of the marginal distributions as Beta products (cf. Theorem \ref{momentchain} and Corollary \ref{marginalmom}).

\subsection{Occupation laws of time-inhomogeneous Markov chains}\label{tikpic}
With respect to the time-inhomogeneous Markov chain ${\bf T}=\{T_n\}_{n\geq 1}$ with kernels $\{K_n\}_{n\geq 1}$ \eqref{Q_n}, starting from initial distribution $\mu$, consider the random empirical occupation measure on $\mathcal{X}$,
$$\nu_n(\cdot) = \frac{1}{n}\sum_{j=1}^n \delta_{T_i}(\cdot).$$
To connect with the intermediate clumping structure from the previous section, we will again implement a clumping procedure, this time to investigate local occupations, or clumped occupations, of the empirical measure of ${\bf T}$ up to time $n$. 

However, in a Markov chain with kernels $\{K_n\}_{n\geq1}$, later clumps of the chain are typically larger than earlier clumps. To keep the clump sizes from tending to zero after normalization, we consider the clumps in reverse chronological order, starting from time $n$, so that the clumped occupations converge nontrivially in distribution.

Formally, let $1=V_1<V_2<\cdots$ be the successive times when the Markov chain changes state, and let $N_n=\min\{i: V_i>n\}$.  Going backwards from time $n$, let $\tau_{n,1}$ be the length $n+1-V_{N_n-1}$ of the last visit to state $Y_{n,1}= T_{V_{N_n-1}}$, $\tau_{n,2}$ be the length $V_{N_n-1}- V_{N_n -2}$ of the visit to state $Y_{n,2} = T_{V_{N_n -2}}$, and $\tau_{n,k}$ be the length $V_{N_n -(k-1)} - V_{N_n -k}$ of the visit to $Y_{n,k} =T_{V_{N_n - k}}$ for $1< k< N_n$.  Let also $\tau_{n,k}=0$ and $Y_{n,k}=T_1$ for $k\geq N_n$. In addition, define $P_{n,k} = \tau_{n,k}/n$ for $k\geq 1$.

The figure below depicts, in a realization, the clumping boundaries $V_j$ marked in forward times, and the lengths of local occupations $\tau_{n,j}=nP_{n,j}$ given backwards in time starting from time $n$.
\begin{center}
\begin{tikzpicture}
\draw[->] (0,0) -- (10,0);
\node at (3.3,.4){...};
\node at (3.3,-.7){...};
\draw (0,-2pt) -- (0,2pt)node[anchor=south] {1};
\draw (4.5,-2pt) -- (4.5,2pt) node[anchor=south]{$V_{N_n-3}$};
\draw (6,-2pt) -- (6,2pt) node[anchor=south]{$V_{N_n-2}$};
\draw (7.5,-2pt) -- (7.5,2pt) node[anchor=south]{$V_{N_n-1}$};
\draw (9,-2pt) -- (9,2pt) node[anchor=south]{$n$};
\draw [decorate,decoration={brace,amplitude=10pt}]
(9,0) -- (7.5,0) node [midway,yshift=-0.7cm]{$\tau_{n,1}$};
\draw [decorate,decoration={brace,amplitude=10pt}]
(7.5,0) -- (6,0) node [midway,yshift=-0.7cm]{$\tau_{n,2}$};
\draw [decorate,decoration={brace,amplitude=10pt}]
(6,0) -- (4.5,0) node [midway,yshift=-0.7cm]{$\tau_{n,3}$};
\end{tikzpicture}
\end{center}

Then, $\nu_n$ is written as
$$\nu_n(\cdot) = \sum_{j=1}^{N_n-1} P_{n,j} \delta_{Y_{n,j}}(\cdot) = \sum_{j=1}^\infty P_{n,j} \delta_{Y_{n,j}}(\cdot).$$

We show (cf. Theorem \ref{inhom to MCcGEM}), for generators $G$ satisfying natural conditions, conditionally on the values $\{Y_{n,j}\}_{j\geq 1}$, that the distributions of $\langle P_{n,j}: j\geq 1\rangle$ converge, as $n\rightarrow\infty$, to a disordered GEM $\langle P^+_j: j\geq 1\rangle$ with parameters given in terms of $G$ and $\mu$.  Also, $\{Y_{n,j}\}_{j\geq 1}$ converges, as $n\rightarrow\infty$, to a homogeneous Markov chain $\{Y_j\}_{j\geq 1}$, with transition kernel in terms of $G$ and $\mu$.  In particular, the joint law of $\langle P_{n,j}: j\geq 1\rangle$ and $\{Y_{n,j}\}_{j\geq 1}$ converges, as $n\rightarrow\infty$, to a Markov Chain conditional GEM distribution, denoted as the MCcGEM$(G)$ distribution with respect to $\mu$. 

In Theorem \ref{occ cor}, we will then be able to show that $\nu_n$ converges to a random measure $\nu$ given in terms of $\langle P^+_j: j\geq 1\rangle$ and $\{Y_j\}_{j\geq 1}$ either in `stick-breaking' or `clumped' forms \eqref{generalF}, \eqref{clumpedF}.
In particular, when $G = \theta(Q-I)$ where $Q$ is a constant stochastic matrix with identical rows $\mu$, the associated sequences $\langle P^+_j: j\geq 1\rangle$ and $\{Y_j\}_{j\geq 1}$ simplify, and the limit $\nu$ is identified in Subsection \ref{dirichlet_subsection} as a Dirichlet process.
Returning to one of our motivations, we comment that when $\mathcal{X}$ is finite 
these results represent a more constructive view of the limits \eqref{DS_limits} found in \cite{DS}.

\medskip
{\it Organization of the paper.}
We develop notions, make remarks, and state the main results, Theorems \ref{RAM}, \ref{Gem to MCcGEM}, \ref{inhom to MCcGEM}, \ref{occ cor}, \ref{occ cor2}, \ref{selfsimgem}, \ref{momentchain}, and \ref{moments_thm}, in this order, in Section \ref{results_sect}.  Proofs are then given in Section \ref{proofs_sect}.

\section{Statement of results}
\label{results_sect}
We now formalize notation and state our main results, and related remarks about them, in several subsections.  Throughout, we will use the convention that empty sums equal $0$, and empty products are $1$.  Also, $1/0=\infty$, $0/0=0$, and $0^0=1$.  The notation $v^t$ signifies that the vector $v$ is in row form.

\subsection{RAMs, GEMs and MCcGEM laws}
A residual allocation model (RAM) is a way of defining a random probability measure on $\mathbb{N}$ by iteratively assigning a random portion of the unassigned probability remaining to the next integer.

\begin{defi}[Residual Allocation Model - RAM] Let ${\bf X}=\{X_j\}_{j\geq 1}$ be a collection of independent $[0,1]$-valued random variables.  Define
\begin{equation}
\label{RAM_eq1}
P_1 = X_1 \ \ {\rm and \ \ }P_j=X_j\left(1-\sum_{i=1}^{j-1}P_i\right)
 \ \ {\rm for \ \ } j\geq 2.
 \end{equation}
Then, if ${\bf P}=\langle P_j: j\geq 1\rangle$ is a.s. a probability measure on $\N$, that is if $\sum_{j=1}^\infty P_j\stackrel{a.s.}=1$,
we say ${\bf P}$ is a RAM. If ${\bf X}$ consists of iid fractions, and the associated ${\bf P}$ is a RAM, we say
${\bf P}$ is a self-similar RAM.\end{defi}

 Consider now the following identity, verified in Lemma \ref{oneidentity}:  For an arbitrary sequence of numbers $\{a_j\}_{j\geq 1}$ and $k\geq 1$,
\begin{equation}
\label{fraction_identity}
\prod_{j=1}^k (1-a_j) + \sum_{j=1}^k a_j\prod_{i=1}^{j-1} (1-a_i) = 1.
\end{equation}
Then, the sequence in \eqref{RAM_eq1} satisfies $P_j =  X_j\prod_{i=1}^{j-1}(1-X_i)$ for $j\geq 1$ (cf. Proposition \ref{oneidentity_prop}).  Accordingly, we have the useful observation that
${\bf P}$ is a RAM exactly when $\prod_{j\geq 1}(1-X_j)\stackrel{a.s.}=0$.

A specific, well-known example of a RAM is the Griffiths-Engen-McCloskey (GEM) sequence.
 \begin{defi}[GEM] Fix $\theta>0$. Let ${\bf X}=\{X_j\}_{j\geq 1}$ be a sequence of iid variables with common distribution Beta$(1,\theta)$. Then, the self-similar RAM ${\bf P}$, constructed from ${\bf X}$, is said to be a GEM$(\theta)$ distribution.
 
Also, consider a sequence $\{\theta_j\}_{j\geq 1}$ of positive numbers, and let ${\bf X}$ be a sequence of independent random variables where $X_j\sim {\rm Beta}(1,\theta_j)$ for $j\geq 1$.  When the measure ${\bf P}$, found in terms of ${\bf X}$, is a RAM, we will say it is a disordered GEM sequence with parameters $\{\theta_j\}_{j\geq 1}$.
\end{defi}

Now, in a RAM ${\bf P}$, one can clump adjacent probabilities with respect to an increasing sequence ${\bf u}$, marking boundaries of clumps, to form a new probability measure ${\bf P^u}$ on $\mathbb{N}$.  
 
\begin{defi}[Clumped measure]
Let ${\bf u}=\{u_j\}_{j\geq 1}$ be an increasing sequence in $\mathbb{N}\cup\{\infty\}$ with $u_1=1$ and $\lim_{j\rightarrow\infty}u_j=\infty$, and let ${\bf P}$ be a RAM. We clump ${\bf P}$ according to ${\bf u}$ to construct a new probability measure ${\bf P^u}=\langle P^u_j:j\geq 1\rangle$ on $\mathbb{N}$ where, for $j\geq 1$,
\begin{align*}
P^u_j =\left\{\begin{array}{rl}
\sum_{i=u_j}^{u_{j+1}-1}P_i & {\rm if \ } u_j<\infty\\
 0 & {\rm if \ } u_j=\infty.
\end{array}\right.
\end{align*}
\end{defi}

We remark, when ${\bf u}$ takes the value infinity at an entry $u_{j+1}$ in the sequence, necessarily ${\bf P^u}$ is a distribution supported on $\{1,2,\ldots, j\}$.

An immediate question now is when ${\bf P^u}$ is also a RAM. We will show that ${\bf P^u}$ is always a RAM as long as ${\bf u}$ is deterministic.  However, the situation is more involved when a random sequence is used for the clumping.

Specifically, we will be interested in two types of random clumping sequences constructed from a Markov chain ${\bf T} = \{T_i\}_{i\geq 1}$ on the discrete space $\mathcal{X}$. The first sequence ${\bf V}$ comes from considering clumps of repeated values in ${\bf T}$; that is, ${\bf V}$ will keep track of the times when ${\bf T}$ switches values. The second sequence ${\bf W}$ arises in considering the times when ${\bf T}$ returns to its initial value $T_1$.

For example, if ${\bf T}=(1,1,2,2,2,2,4,1,1,5,\ldots)$ is observed, we define ${\bf V} = (1,3,7,8,10, \ldots)$ and ${\bf W}= (1,2,8,9,\ldots)$.  More formally,
Let $V_1=W_1=1$ and, for $j\geq 1$, set
\begin{equation}
\label{V_def}
V_{j+1} =\inf\left\{v>V_j:T_v\neq T_{v-1}\right\} \ \ {\rm and \ \ } 
W_{j+1} =\inf\left\{w>W_j:T_w=T_1\right\}.
\end{equation}
In the case that ${\bf T}$ reaches an absorbing state, denoted $T_\infty$, the chain is eventually constant and ${\bf V}$ is eventually infinite. In the case that $T_1$ is a transient state, the chain returns to the first state finitely many times and ${\bf W}$ eventually takes the value infinity.

Define now ${\bf Y} = \{Y_j\}_{j\geq 1}$ by $Y_j =T_{V_j}$ for $j\geq 1$.  When ${\bf T}$ does not reach an absorbing state, we think of ${\bf Y}$ as the sequence of values taken by ${\bf T}$ without repetition. If however ${\bf T}$ meets an absorbing state $T_\infty$, ${\bf Y}$ will eventually be constant at value $T_\infty$.  

In the following theorem, a reader may like to focus on first pass on the case when ${\bf T}$ possesses no absorbing states and formulas simplify.

In what follows, we will say that a sequence ${\bf z}$ is a `possible' sequence for a Markov chain ${\bf Z}$ on $\mathcal{X}$ if the event $\{Z_i = z_i: 1\leq i\leq n\}$ has positive probability for each $n\geq 1$.

\begin{theorem}[Clumped RAMs]
\label{RAM}
Let ${\bf P}$ be a RAM. Fix an increasing sequence ${\bf u}=\{u_j\}_{j\geq 1}$ in $\mathbb{N}\cup\{\infty\}$ with $u_1=1$ and $\lim_{j\rightarrow\infty}u_j=\infty$. Then,
\begin{itemize}
\item [(1)] ${\bf P^u}$ is a RAM with respect to fractions
${\bf X^u}=\{X^u_j\}_{j\geq 1}$ where
\begin{align*}
X^u_j = \left\{\begin{array}{cl}
\sum_{i=u_j}^{u_{j+1}-1}X_i\prod_{l=u_j}^{i-1}(1-X_l)&\\
\ \ \ \ \ \ \ \ 
 = 1-\prod_{i=u_j}^{u_{j+1}-1}(1-X_i) 
& {\rm if \ } u_j<\infty \\
   1 & {\rm if \ } u_j=\infty.
\end{array}
\right.
\end{align*} 

\end{itemize}

Let now ${\bf T}=\{T_j\}_{j\geq 1}$ be a Markov chain, independent of ${\bf P}$ and with homogeneous transition kernel $Q$. 
\begin{itemize}
\item[(2)] Then, the sequence ${\bf Y} = \{T_{V_j}\}_{j\geq 1}$ is a Markov chain with homogeneous transition kernel $K$ given by
\begin{align*}
K(z,w)  = \left\{\begin{array}{rl}
\frac{Q_{z,w}}{1-Q_{z,z}}  & {\rm for \ } z\neq w;\ Q_{z,z}\neq 1\\
 1  & {\rm for \ } z=w;\ Q_{z,z}=1\\
 0  & {\rm otherwise. \ }
\end{array}\right.
\end{align*}
\end{itemize}

Let ${\bf t}$ be a possible sequence in $\mathcal{X}$ with respect to ${\bf T}$. Let ${\bf y}$ be a possible sequence in $\mathcal{X}$ with respect to ${\bf Y}$.

\begin{itemize}
\item[(3)] Then, ${\bf P^V}\bigr|{\bf T}={\bf t}$ and ${\bf P^W}\bigr|{\bf T}={\bf t}$ are RAMs.
\item[(4)] Also, if ${\bf P}$ is self-similar, ${\bf P^V}\bigr|{\bf Y}={\bf y}$ is a RAM and, when $t_1$ is a recurrent state with respect to ${\bf T}$, ${\bf P^W}\bigr|T_1=t_1$ is a self-similar RAM.
\end{itemize}
\end{theorem}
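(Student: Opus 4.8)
The plan is to prove (1) and (2) by direct computation and then deduce (3) and (4) by conditioning, the point of (4) being that it requires the extra hypothesis of self-similarity. For (1), I would begin from the product form $P_j = X_j\prod_{i=1}^{j-1}(1-X_i)$ (Proposition~\ref{oneidentity_prop}) and, for an index with $u_j<\infty$, factor $\prod_{i=1}^{u_j-1}(1-X_i)$ out of $P^u_j=\sum_{i=u_j}^{u_{j+1}-1}P_i$; the remaining inner sum equals $1-\prod_{i=u_j}^{u_{j+1}-1}(1-X_i)$ by the identity \eqref{fraction_identity} applied to $a_l=X_{u_j+l-1}$, i.e.\ it equals $X^u_j$. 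Grouping the outer factors blockwise gives $\prod_{i=1}^{u_j-1}(1-X_i)=\prod_{m=1}^{j-1}(1-X^u_m)$, hence $P^u_j=X^u_j\prod_{m=1}^{j-1}(1-X^u_m)$, which is the RAM form with fractions $X^u_j$; these are independent, being functions of disjoint blocks of the independent family ${\bf X}$, and clearly $[0,1]$-valued. Summability $\sum_j P^u_j=1$ a.s.\ then follows from the telescoping identity $\prod_{j=1}^{J}(1-X^u_j)=\prod_{i=1}^{u_{J+1}-1}(1-X_i)\to 0$ a.s., valid since $u_{J+1}\to\infty$ and ${\bf P}$ is a RAM; a coordinate $u_j=\infty$ forces $X^u_j=1$ and is checked directly.

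For (2), this is the embedded jump-chain computation. Writing $\tau_j=V_{j+1}-V_j$, the Markov property of ${\bf T}$ makes the joint law of $(Y_1,\tau_1,Y_2,\tau_2,\dots,Y_j)$ factor, the contribution of the $i$-th visited state splitting as a geometric weight $Q_{y_i,y_i}^{\,\tau_i-1}(1-Q_{y_i,y_i})$ times $K(y_i,y_{i+1})$; summing out the holding times yields $\mathbb{P}(Y_{j+1}=w\mid Y_1,\dots,Y_j)=K(Y_j,w)$, so ${\bf Y}$ is a homogeneous Markov chain with kernel $K$, the absorbing case $Q_{z,z}=1$ being consistent with the conventions (${\bf Y}$ eventually constant at $T_\infty$). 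For (3), since ${\bf T}$ is independent of ${\bf P}$, conditioning on $\{{\bf T}={\bf t}\}$ leaves the law of ${\bf X}$ unchanged while freezing ${\bf V}$ and ${\bf W}$ to the deterministic switch-time, resp.\ return-time, sequences of the fixed path ${\bf t}$ — both of which start at $1$, are strictly increasing until possibly reaching $\infty$, and tend to $\infty$ — so part (1) applies verbatim.

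The substance is (4), where conditioning on the coarser data ${\bf Y}$ (resp.\ $T_1$) leaves the clumping sequence random. Conditioning on $\{{\bf Y}={\bf y}\}$, the factorization from (2) shows the holding times $\tau_j$ are then conditionally independent, geometric of parameter $1-Q_{y_j,y_j}$ on $\{1,2,\dots\}$, and still independent of the iid family ${\bf X}$. Conditioning further on $\{\boldsymbol\tau={\bf m}\}$ makes ${\bf V}$ deterministic, so by (1) the clumped fractions $X^V_j$ are conditionally independent with $X^V_j\stackrel{d}{=}1-\prod_{i=1}^{m_j}(1-X_i)$; it is here that self-similarity of ${\bf P}$ is used, so that this conditional law depends on $m_j$ alone. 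Averaging over $\boldsymbol\tau$, the pairs $(\tau_j,X^V_j)$ are independent given $\{{\bf Y}={\bf y}\}$, so in particular $(X^V_j)_{j\ge1}$ are independent $[0,1]$-valued, and $\prod_{j\le J}(1-X^V_j)=\prod_{i<V_{J+1}}(1-X_i)\to 0$ a.s.\ gives the RAM property (the absorbing-tail case, where $X^V_{j_0}=1$ a.s.\ absorbs the remaining mass, is checked separately). For ${\bf P^W}\mid T_1=t_1$ with $t_1$ recurrent, the strong Markov property at the successive return times $W_j$ makes the excursion lengths $\sigma_j=W_{j+1}-W_j$ iid and a.s.\ finite — so $W_j\to\infty$ a.s.\ — and still independent of ${\bf X}$; rerunning the same argument with $\boldsymbol\sigma$ in place of $\boldsymbol\tau$ now produces iid pairs $(\sigma_j,X^W_j)$, hence iid fractions $X^W_j$, i.e.\ a self-similar RAM.

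I expect the main obstacle to be precisely this last step: showing that after averaging over the random block lengths the clumped fractions remain \emph{independent} (resp.\ iid), not merely exchangeable. The way around it is the conditional-independence bookkeeping above — condition on the block lengths, invoke (1) together with self-similarity so the fraction laws depend only on those lengths, then note the (length, fraction) pairs inherit the independence (resp.\ iid) structure of the lengths; summability is in every case routine via the telescoping product from (1).
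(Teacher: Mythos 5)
Your proposal is correct and follows essentially the same route as the paper: product form and blockwise factorization for (1); the embedded jump-chain computation with geometric holding times for (2); freezing ${\bf V},{\bf W}$ by conditioning on ${\bf T}={\bf t}$ for (3); and for (4), conditioning first on ${\bf Y}={\bf y}$ to get independent geometric holding times, then on $\boldsymbol\tau$ to apply (1), using self-similarity so the law of $X^V_j$ depends only on the block length, and finally averaging to obtain the factorized product (with the analogous iid-cycle argument at return times for ${\bf P^W}$). Your identification of where self-similarity is indispensable --- making the conditional fraction law depend only on the block length, not its starting index, so that the average over the random lengths still factorizes --- matches exactly the mechanism in the paper's proof.
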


We remark that the specifications of the fractions and their distributions in items (4) are given in the proof of Theorem \ref{RAM}.   These specifications, in the case when ${\bf P}$ is a GEM$(\theta)$ sequence, are part of Theorem \ref{Gem to MCcGEM}.

Also, in item (4) above, we note that the self-similarity of ${\bf P}$ is important to deduce in full generality that ${\bf P^V}\bigr|{\bf Y}$ is a RAM.  Later, in Example \ref{GEM_example1}, we see that ${\bf P^V}\bigr|{\bf Y}$ may not be a RAM if ${\bf P}$ is not a self-similar RAM.

In addition, we observe that in item (4), when $t_1$ is a transient state, the sequence ${\bf X^W}|T_1=t_1$ eventually takes constant value $1$ since $t_1$ is visited only a finitely many times a.s.  Given $X^W_1|T_1=t_1$ is a nontrivial variable, ${\bf X^W}\bigr|T_1=t_1$ cannot be iid.  However, one may consider an iid sequence $\{Z_i\}_{i\geq 1}$, say on a different probability space, where $Z_1\stackrel{d}{=}X^W_1\bigr|T_1=t_1$, and check that the self-similar RAM formed from fractions $\{Z_i\}_{i\geq 1}$ has the same distribution as ${\bf P^W}\bigr|T_1=t_1$.

We now consider the clumping procedures with respect to a GEM distribution ${\bf P}$.  
It will be convenient to define the notion of a generator kernel or matrix, these terms used interchangeably.

\begin{defi}[Generator kernel]
\label{generator_def}
Let $G=\{G_{i,j}: i,j\in \mathcal{X}\}$ be a square matrix on $\mathcal{X}$.
We say that $G$ is a generator kernel if it satisfies $G_{i,j}\geq0$ for $i\neq j$ and $G_{i,i}=-\sum_{j\neq i}G_{i,j}$.  In addition, we will assume a boundedness condition, $\sup_i |G_{i,i}|<\infty$.\end{defi}

Every matrix of the form $G=\theta(Q-I)$, where $\theta>0$ and $Q$ is a stochastic kernel on $\mathcal{X}$, is a generator matrix.  Moreover, we claim that every generator matrix can be (non-uniquely) decomposed in this fashion:  The final condition in Definition \ref{generator_def} ensures that all entries are bounded, $\sup_{l,k}|G_{l,k}| \leq \sup_i |G_{i,i}|<\infty$, so that a normalizing $\theta$ can be found.  

We also observe that a generator matrix $G$ has a zero row, that is $G_{i,i}=0$ for some $i\geq 1$, exactly when $i$ is an absorbing state for a corresponding $Q$.  In particular, when $G$ does not have zero rows, any corresponding $Q$ does not have absorbing states.

We now formally define the notion of a Markov Chain conditional GEM (MCcGEM) joint distribution on the space $[0,1]^{\N}\times \mathcal{X}^{\N}$, endowed with the product topology and product $\sigma$-field formed in terms of the Borel $\sigma$-fields on $[0,1]$ and $\mathcal{X}$.  This topology is discussed more in Subsection \ref{proof_occ_cor1}.
 By convention, we will say that a Beta$(1,0)$ random variable equals $1$ a.s.
 
\begin{defi}[MCcGEM distribution] With respect to a generator matrix $G$,
let ${\bf Y}$ be a homogeneous Markov chain with initial distribution $\mu$ and transition kernel $K_G$ on $\mathcal{X}$ given by
\begin{align}
K_G(w,z)  = \left\{\begin{array}{rl}
\frac{G_{w,z}}{-G_{w,w}}  & {\rm if }\ \ w\neq z;\ G_{w,w}\neq0
\label{MCcGEM_kernel}\\
 1  & {\rm if }\ \ w=z;\ G_{w,w}=0\\
 0  & {\rm otherwise.}
\end{array}\right.
\end{align}

Consider variables ${\bf X}=\{X_j\}_{j\geq 1}$, on the same probability space as ${\bf Y}$, such that $X_j\bigr|{\bf Y}={\bf y}\sim$ Beta$(1,-G_{y_j,y_j})$ and $\{X_j\bigr|{\bf Y}={\bf y}\}_{j\geq 1}$ are independent.  Define ${\bf P}$ where $P_j=X_j\prod_{i=1}^{j-1}\left(1-X_i\right)$ for $j\geq 1$, and observe that ${\bf P}\bigr |{\bf Y} = {\bf y}$ is a disordered GEM with parameters $\{-G_{y_j,y_j}\}_{j\geq 1}$ (see below).

We say that the pair $({\bf P}, {\bf Y})$ has MCcGEM$(G)$ distribution with respect to $\mu$.  
\end{defi}

To see that ${\bf P}\bigr|{\bf Y}={\bf y}$ is a disordered GEM, we need only observe that ${\bf P}\bigr |{\bf Y}={\bf y}$ is a probability distribution on $\mathbb{N}$.  Here, $\prod_{n\geq 1} (1-X_n)\bigr| \big({\bf Y}={\bf y}\big)=0$ a.s. exactly when $\sum_{n\geq 1} X_n \bigr|{\bf Y}={\bf y}$ diverges a.s.  As the tail $\sigma$-field is trivial, the opposite is the summability $\sum_{n\geq 1}  X_n \bigr|\big({\bf Y}={\bf y}\big)<\infty$ a.s.  By Kolmogorov's $3$-series theorem, and that ${\bf X}|{\bf Y}={\bf y}$ is composed of Beta random variables on $[0,1]$ with means $\{(1-G_{y_j,y_j})^{-1}\}_{j\geq 1}$ and variances dominated by the means, almost sure summability holds exactly
when $\sum_{j\geq 1} |G^{-1}_{y_j,y_j}| <\infty$.  For a generator matrix $G$, this is never the case as the terms $\{|G_{x,x}|\}_{x\in \X}$ are uniformly bounded above.

We now describe a relation between GEM distributions and MCcGEM laws through clumping with respect to a homogeneous Markov chain.

\begin{theorem}[GEM to MCcGEM]
\label{Gem to MCcGEM} 
Let $\theta>0$ and ${\bf P}$ be GEM$(\theta)$ distribution.  Let also ${\bf T}=\{T_j\}_{j\geq 1}$ be an independent homogeneous Markov chain with kernel $Q$ and initial distribution $\mu$.
Recall the associated switch times ${\bf V}$, the clumped distribution ${\bf P^V}$, and the Markov chain ${\bf Y}$ near \eqref{V_def}. 

Then, ${\bf Y}$ is a homogeneous Markov chain with kernel $K_{\theta(Q-I)}$ and ${\bf P^V}|{\bf Y=y}$ is a disordered GEM with parameters $\{\theta(1-Q_{y_j,y_j})\}_{j\geq 1}$, that is
$({\bf P^V},{\bf Y})$ has MCcGEM$(\theta(Q-I))$ distribution with respect to $\mu$. 
\end{theorem}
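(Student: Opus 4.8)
The plan is to reduce everything to Theorem \ref{RAM} together with one explicit distributional identity about geometric compounds of $\mathrm{Beta}(1,\theta)$ fractions. First, the assertion that ${\bf Y}$ is a homogeneous Markov chain with kernel $K_{\theta(Q-I)}$ is immediate from Theorem \ref{RAM}(2): writing $G=\theta(Q-I)$ one has $G_{w,z}=\theta Q_{w,z}$ for $w\neq z$ and $-G_{w,w}=\theta(1-Q_{w,w})$, so the kernel $K$ of Theorem \ref{RAM}(2) agrees entry-by-entry with $K_G$ of \eqref{MCcGEM_kernel}, including the absorbing case $Q_{w,w}=1$. Likewise, since a GEM$(\theta)$ sequence is self-similar, Theorem \ref{RAM}(4) already guarantees that ${\bf P^V}\bigl|{\bf Y}={\bf y}$ is a RAM for every possible ${\bf y}$ (consistently with the boundedness argument used in the definition of the MCcGEM law). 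Hence the only remaining content is to identify the conditional distributions of the residual fractions of ${\bf P^V}\bigl|{\bf Y}={\bf y}$ as independent $\mathrm{Beta}\bigl(1,\theta(1-Q_{y_j,y_j})\bigr)$.

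The key computation is: if $N$ is geometric on $\{1,2,\dots\}$ with success parameter $p\in(0,1]$ and $\{X_i\}_{i\ge1}$ are iid $\mathrm{Beta}(1,\theta)$ independent of $N$, then $1-\prod_{i=1}^N(1-X_i)\sim\mathrm{Beta}(1,\theta p)$. To see this, note $-\log(1-X_i)\sim\mathrm{Exp}(\theta)$, so $-\log\prod_{i=1}^N(1-X_i)=\sum_{i=1}^N E_i$ with $E_i$ iid $\mathrm{Exp}(\theta)$; a one-line Laplace-transform computation shows a geometric$(p)$ sum of iid $\mathrm{Exp}(\theta)$'s is $\mathrm{Exp}(\theta p)$, and exponentiating back gives $\prod_{i=1}^N(1-X_i)\sim\mathrm{Beta}(\theta p,1)$, i.e.\ the claim. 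The degenerate case $p=0$, namely $Q_{y_j,y_j}=1$, produces the infinite product $0$ and hence the value $1$, consistent both with the $\mathrm{Beta}(1,0)$ convention and with the $u_j=\infty$ clause of Theorem \ref{RAM}(1).

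To assemble these, fix a possible ${\bf t}$ for ${\bf T}$: conditioning on ${\bf T}={\bf t}$ makes ${\bf V}$ deterministic while, by independence, ${\bf X}$ stays iid $\mathrm{Beta}(1,\theta)$, so Theorem \ref{RAM}(1) gives that ${\bf P^V}\bigl|{\bf T}={\bf t}$ is a RAM with independent fractions $X^V_j=1-\prod_{i=V_j}^{V_{j+1}-1}(1-X_i)$, the product ranging over the $j$-th sojourn block of length $L_j:=V_{j+1}-V_j$. Now fix a possible ${\bf y}$ for ${\bf Y}$. Computing $P({\bf T}={\bf t})$ for ${\bf t}$ obtained by repeating $y_j$ exactly $\ell_j$ times shows that, conditionally on ${\bf Y}={\bf y}$, the sojourn lengths $(L_j)_{j\ge1}$ are independent with $L_j$ geometric on $\{1,2,\dots\}$ of parameter $1-Q_{y_j,y_j}$ (the usual jump-chain/holding-time split), and, since ${\bf X}\perp{\bf T}$, that ${\bf X}$ remains iid $\mathrm{Beta}(1,\theta)$ and independent of $(L_j)_{j\ge1}$ under this conditioning. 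Given $\bigl({\bf Y},(L_j)\bigr)$ the $X^V_j$ are independent, with $X^V_j$ a function only of its own block of $L_j$ iid $\mathrm{Beta}(1,\theta)$ fractions; a tower-property argument using the conditional independence of the $L_j$ given ${\bf Y}={\bf y}$ then yields that the $X^V_j$ are independent given ${\bf Y}={\bf y}$ with $X^V_j\bigl|{\bf Y}={\bf y}\stackrel{d}{=}1-\prod_{i=1}^{N_j}(1-X_i)$ for an independent geometric$(1-Q_{y_j,y_j})$ count $N_j$. By the key computation this is $\mathrm{Beta}\bigl(1,\theta(1-Q_{y_j,y_j})\bigr)$, which, with the Markov property of ${\bf Y}$, is exactly the MCcGEM$(\theta(Q-I))$ distribution with respect to $\mu$.

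The main obstacle is the bookkeeping in the last step: the index blocks over which the products $X^V_j$ are formed are themselves random (determined by $L_1,\dots,L_j$), so one must carefully peel off the conditioning on $(L_j)$ while preserving independence of the $X^V_j$ given only ${\bf Y}={\bf y}$, and handle the absorbing/$V_j=\infty$ boundary cases. The geometric-compound identity for $\mathrm{Beta}(1,\theta)$ fractions is the conceptual heart; everything else is conditioning and the already-available Theorem \ref{RAM}.
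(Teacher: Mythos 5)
Your proposal is correct and follows the paper's structure exactly: invoke Theorem \ref{RAM}(2) to identify the kernel of ${\bf Y}$, invoke Theorem \ref{RAM}(4) to get that the fractions ${\bf X^V}\bigl|{\bf Y}={\bf y}$ are conditionally independent, and then identify each conditional fraction law. The only divergence is in the last step: you prove the geometric-compound identity $1-\prod_{i=1}^N(1-X_i)\sim\mathrm{Beta}(1,\theta p)$ by passing to $-\log(1-X_i)\sim\mathrm{Exp}(\theta)$ and computing the Laplace transform of a $\mathrm{Geom}(p)$ sum of exponentials, whereas the paper computes the conditional integer moments $\E[(1-X^V_j)^m\mid{\bf Y}={\bf y}]=\theta(1-Q_{y_j,y_j})/(\theta(1-Q_{y_j,y_j})+m)$ and matches them to $\mathrm{Beta}(1,\theta(1-Q_{y_j,y_j}))$. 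These two computations are algebraically the same geometric series (replace $m$ by a continuous $s$); your exponential/Laplace framing is a somewhat cleaner conceptual packaging of the same summation, and it makes the degenerate $p=0$ boundary case transparent, but the argument is not a genuinely different route. No gaps.
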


Some cases of interest are developed in the following examples.

\begin{ex}\label{GEM_example}
\rm
Suppose $\bf{P}\sim$ GEM$(\theta)$ and that $\bf{T}$ is a homogeneous Markov chain with stochastic kernel $Q$ where $Q$ has constant diagonal entries, $Q_{i,i}=q$ for $i\in \X$. By Theorem \ref{Gem to MCcGEM}, ${\bf P^V}\bigr |{\bf Y}$ is a disordered GEM sequence with parameters $\{\theta(1-Q_{y_i,y_i})\}_{i\geq 1}$.  However, since $Q_{y_i,y_i}\equiv q$, we conclude ${\bf P^V}\bigr |{\bf Y}={\bf P^V}$ does not depend on ${\bf Y}$ and is actually a GEM$(\theta(1-q))$ sequence.  In this case, the pair $({\bf P^V}, {\bf Y})$ consists of independent sequences.    

More generally, suppose $\bf{P}$ is any random distribution on $\mathbb{N}$.  Then, indeed, with respect to this Markov chain $\bf{T}$, by the proof of Part (4) of Theorem \ref{RAM} (cf. \eqref{fractions X V}), the fractions ${\bf X^V}$ do not depend on ${\bf Y}$, and so $\bf{P^V}\bigr |{\bf Y}=\bf{P^V}$.
\end{ex}

\begin{ex}\label{GEM_example1}
\rm We now consider a RAM $\bf{P}$ constructed from independent fractions $X_j\sim {\rm Beta}(1/2, 1+j/2)$ for $j\geq 1$. Such a RAM is a member of the well-known 2-parameter GEM$(\alpha,\theta)$ family, here with ${\bf P}\sim$ GEM$(1/2,1)$.  Let $\bf{T}$ be a sequence of iid Bernoulli$(1/2)$ variables.  Thought of as a Markov chain on the $2$-state space $\X=\{1,2\}$, every entry of the stochastic kernel $Q$ of ${\bf T}$ equals $1/2$.  By the discussion in Example \ref{GEM_example}, as the diagonal entries of $Q$ are the constant $q=1/2$, we have ${\bf P^V}\bigr |{\bf Y} = {\bf P^V}$. 

We now observe that ${\bf P^V}$ is not a RAM:  If it were a RAM, consider the associated non-atomic fractions ${\bf X^V}$ (cf. Part (1) of Theorem \ref{RAM}).  Compute
\begin{align*}
\E\left[1-X_1^V|V_2-V_1=m,V_3-V_2=n\right]&=\E\left[\prod_{j=1}^m(1-X_j)\right]\\ & =\prod_{j=1}^m\frac{2+j}{3+j}=\frac3{3+m}\\
\E\left[1-X_2^V|V_2-V_1=m,V_3-V_2=n\right] & =\frac{3+m}{3+m+n}\\
\E\left[(1-X_1^V)(1-X_2^V)|V_2-V_1=m,V_3-V_2=n\right] & =\frac3{3+m+n}.
\end{align*}
Then, 
$\E\left[1-X_1^V\right] =\sum_{m\geq1}\frac3{3+m}(.5)^m$, 
$\E\left[1-X_2^V\right]  =\sum_{n,m\geq1}\frac{3+m}{3+m+n}(.5)^{m+n}$, and
$\E\left[(1-X_1^V)(1-X_2^V)\right]  =\sum_{m,n\geq1}\frac3{3+m+n}(.5)^{m+n}$.  Hence,
$\text{Cov}[1-X_1^V,1-X_2^V]  \approx-.005391$, and so the non-atomic fractions are not independent, and ${\bf P^V}$ cannot be a RAM.
\end{ex}

\subsection{Clumping and time-inhomogeneous Markov chains}
Of course, the notion of clumping can be applied to random probability measures on $\mathbb{N}$, which are not RAMs. 
In particular, to capture the empirical occupation law limit of a Markov chain, we study its local occupations, or clumps of the sequence indexed in time, as it explores the space ${\mathcal X}$.  As noted in the introduction, we will look at these local occupations in reverse order.

Let ${\bf  T}=\{ T_j\}_{j\geq 1}$ be a Markov chain on the discrete space ${\mathcal X}$, without absorbing states. Recall the definition of the switching times ${\bf V}$ (cf. \eqref{V_def}), and let 
$N_n = \min\{i: V_i>n\}$
index the first switch after time $n$.
For $1<k< N_n\leq i$ and $j\geq 1$, define
$$
\tau_{n,1}  =n+1-V_{N_n-1}, \ \  \tau_{n,k} =V_{N_n-(k-1)}-V_{N_n-k}, \ \ {\rm and \ \ }\tau_{n,i}=0.$$
Also, set
\begin{equation}
\label{Y_eqn}
 Y_{n,1} = T_n=T_{V_{N_n-1}}, \ \ Y_{n,k} = T_{V_{N_n-k}}, \ \ {\rm and \ \ } Y_{n,i}  = T_1,
 \end{equation}
 and $P_{n,j}=\tau_{n,j}/n$.
Consider the sequences ${\bf P}_n = \langle P_{n,j}: j\geq 1\rangle$ and ${\bf Y}_n = \{Y_{n,j}\}_{j\geq 1}$.

As a concrete example, consider an observation ${\bf T}=(1,1,1,6,6,1,3,3,3,5,\ldots)$. Then for $n=4$, the local occupations are summarized by eventually constant sequences ${\bf P}_4=(1/4,3/4,0,0,0,0,\ldots)$ and ${\bf Y}_4=(6,1,1,1,1,1,\ldots)$. Similarly, when $n=7$, we have ${\bf P}_7=(1/7,1/7,2/7,3/7,0,0,0,\ldots)$ and ${\bf Y}_7=(3,1,6,1,1,1,1,\ldots)$. For a more general depiction, please refer to the figure in Section \ref{tikpic}.

Hence, for $l\in \mathcal{X}$,
we have generally that
$$\nu_n(l):=\frac1n\sum_{j=1}^n\delta_{T_j}(l) = \sum_{j=1}^\infty P_{n,j}\delta_{Y_{n,j}}(l).$$
In the middle of the display, we see the average Markov chain ${\bf T}$ occupation of state $l$ in the first $n$ steps. On the right-hand side, the sum is over local occupations, or clumps, of state $l$, seen in the chain ${\bf  T}$ through $n$ steps.  The notion suggested by this relation, part of the genesis of this article, is that we may study the limit average occupation law of ${\bf T}$ by investigating the limit of the pair $({\bf P}_n, {\bf Y}_n)$ describing local occupations.

We now focus on a class of time-inhomogeneous Markov chains for which the limits of $({\bf P_n}, {\bf Y_n})$ have succinct representation. Specifically, we consider inhomogeneous Markov chains ${\bf T}$ with transition kernels $\{I+G/n\}$, where $G$ is a generator matrix with no zero entries on the diagonal. A finite space ${\mathcal X}$ case where $G$ was taken to have no zero entries at all was studied in \cite{DS}; see also \cite{Englander}, \cite{Bouguet} for related developments.

In these chains, the clump lengths $V_{k}- V_{k-1}$ are typically growing with $k$, unlike for homogeneous Markov chains.  In particular, rather than an ergodic theorem, it was shown in \cite{DS} (cf. \eqref{DS_limits}) that the occupation laws converge weakly to a nontrivial distribution. Here, we consider a countable space generalization, allowing for reducibility and transient states,  and formulate a characterization of these occupation limits through the reversed clumping device described above.

In the following statement, we say that a matrix is non-negative if all its entries are non-negative. Additionally, weak convergences here are in the sense of finite-dimensional distributions, the natural sense associated to the product space $[0,1]^\N\times \mathcal{X}^\N$ endowed with the product topology.

\begin{theorem}[Time-inhomogenous MC to MCcGEM]
\label{inhom to MCcGEM} Let $G$ be a generator matrix on $\mathcal{X}$ without zero rows. Let $\theta>0$ and $M\in\mathbb{N}$ be such that both $M, \theta >\inf\{r\in\mathbb{R}^+:I+r^{-1}G\text{ is non-negative}\}$, and define $Q=I+G/\theta$.  Let also $\pi$ be a stochastic vector and $\mu$ be a stationary distribution of $Q$ so that 
entry-wise,
\begin{equation}
\label{convergence_condition} 
\pi^tQ^n\rightarrow \mu^t \ {\rm as \ } n\rightarrow\infty.
\end{equation}

 Define kernels $\{K_n\}_{n\geq 1}$ by 
\begin{equation}
K_n=I+\frac Gn\mathbbm{1}(n>M),
\label{K_n_eqn}
\end{equation}
and let ${\bf T}$ be the inhomogeneous Markov chain with transition kernels $\{K_n\}_{n\geq 1}$ and initial distribution $\pi$.  Define $({\bf P}_n,{\bf Y}_n)$ as above with respect to ${\bf T}$, and also define the generator matrix $G'$ by
\begin{equation}
 G'_{ij}=\frac{\mu_j}{\mu_i}G_{ji}\mathbbm{1}(\mu_i\neq 0).
\label{G'_eqn}
\end{equation}

Then, ${\bf Y}_n$ converges weakly to the homogeneous Markov chain ${\bf Y}'$ with kernel $K_{G'}$ and initial distribution $\mu$.  Also, for a possible sequence ${\bf y}$ of ${\bf Y}'$, we have ${\bf P}_n\bigr | {\bf Y}_n ={\bf y}$ converges weakly to a disordered GEM sequence ${\bf P}'$ with parameters $\{-G'_{y_n,y_n}\}_{n\geq 1}$.  Therefore, the associated pairs $({\bf P}_n, {\bf Y}_n)$ converge weakly to $({\bf P}', {\bf Y}')$ with MCcGEM$(G')$ distribution with respect to $\mu$.
\end{theorem}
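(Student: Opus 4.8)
\emph{The plan} is to prove the joint finite-dimensional convergence of $(Y_{n,1},\dots,Y_{n,k};P_{n,1},\dots,P_{n,k})$ to the $k$-coordinate marginal of the MCcGEM$(G')$ law with respect to $\mu$, for each fixed $k$; the convergence ${\bf Y}_n\Rightarrow{\bf Y}'$ is then its $\mathcal X^k$-marginal, and ${\bf P}_n\,|\,{\bf Y}_n={\bf y}\Rightarrow{\bf P}'$ follows by disintegration since $\mathbb{P}(Y_{n,j}=y_j,\,j\le k)$ has a strictly positive limit for ${\bf y}$ possible for ${\bf Y}'$. Because $G$ has no zero rows, ${\bf T}$ switches states infinitely often and $N_n\to\infty$ in probability, so off an event of vanishing probability the first $k$ coordinates of $({\bf P}_n,{\bf Y}_n)$ are carried by the last $k$ sojourns, delimited by the backward switch times $n\ge S_1>\cdots>S_k$, $S_j:=V_{N_n-j}$. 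Writing $g_x:=-G_{x,x}>0$ and $s_0:=n$, the event $\{Y_{n,j}=y_j,\,S_j=s_j:1\le j\le k\}$ — which forces the $y_j$ to be a possible path for ${\bf Y}'$, since $y_j\ne y_{j+1}$ — is precisely the event that ${\bf T}$ is constant $\equiv y_j$ on $[s_j,s_{j-1}-1]$ (on $[s_1,n]$ for $j=1$) and $T_{s_k-1}\ne y_k$; using $K_m(x,x)=1-g_x/m$ and $K_m(x,z)=G_{x,z}/m$ for $m>M$ and factoring at the clump boundaries, its probability equals, up to ratios tending to $1$,
\[
\Big(\sum_{w\ne y_k}\mathbb{P}(T_{s_k-1}=w)\,\frac{G_{w,y_k}}{s_k}\Big)\ \prod_{j=1}^{k-1}\frac{G_{y_{j+1},y_j}}{s_j}\ \prod_{j=1}^{k}\ \prod_{m=s_j+1}^{s_{j-1}}\Big(1-\frac{g_{y_j}}{m}\Big).
\]

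Next I would analyze the asymptotics on the set where $\alpha_j:=s_j/n\to\alpha_j^\circ$ with $1=\alpha_0^\circ>\alpha_1^\circ>\cdots>\alpha_k^\circ>0$ (the complementary region contributing vanishing mass, as $S_j/n$ converges in law to a positive limit for each fixed $j$). Three inputs: (i) $\prod_{m=s_j+1}^{s_{j-1}}(1-g_{y_j}/m)=\exp\!\big(-g_{y_j}\log(s_{j-1}/s_j)+o(1)\big)\to(\alpha_j^\circ/\alpha_{j-1}^\circ)^{g_{y_j}}$; (ii) $s_k\to\infty$ and $\mathbb{P}(T_{s_k-1}=\cdot)$, being $\pi^t$ times a partial product of the kernels $\prod(I+G/m)$, converges to $\mu^t$ — this is where hypothesis \eqref{convergence_condition} enters, together with $\mu^tG=0$ (from $\mu^tQ=\mu^t$); (iii) hence, since $\sum_w\mu_wG_{w,y_k}=0$, the bracketed factor times $s_k$ tends to $-\mu_{y_k}G_{y_k,y_k}=\mu_{y_k}g_{y_k}$. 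Summing the displayed probability over lattice points ${\bf s}$ with $(s_j/n)_{j\le k}$ in a box $A$, the $k$ factors $1/s_j\sim(n\alpha_j^\circ)^{-1}$ combine with the $\sim n^k$ lattice points per unit $\boldsymbol\alpha$-volume, so the sum is a Riemann sum converging to
\[
\mu_{y_k}\,g_{y_k}\prod_{j=1}^{k-1}G_{y_{j+1},y_j}\ \int_A\ \prod_{j=1}^{k}\frac{\alpha_j^{\,g_{y_j}-1}}{\alpha_{j-1}^{\,g_{y_j}}}\ d\alpha_1\cdots d\alpha_k .
\]
Since $P_{n,1}=(n+1-S_1)/n$ and $P_{n,j}=(S_{j-1}-S_j)/n$, the substitution $\alpha_j=R_j:=1-(p_1+\cdots+p_j)$, $R_0=1$, has unit Jacobian and rewrites this as a joint density in $(p_1,\dots,p_k;y_1,\dots,y_k)$ equal to $\mu_{y_k}g_{y_k}\prod_{j<k}G_{y_{j+1},y_j}\prod_{j=1}^kR_j^{\,g_{y_j}-1}R_{j-1}^{-g_{y_j}}$.

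It then remains to match this with the MCcGEM$(G')$ marginal. Since $K_{G'}(w,z)=G'_{w,z}/g_w=\mu_zG_{z,w}/(\mu_wg_w)$, telescoping the $\mu$'s gives $\mathbb{P}(Y'_j=y_j,\,j\le k)=\mu_{y_1}\prod_{j<k}K_{G'}(y_j,y_{j+1})=\mu_{y_k}\prod_{j<k}G_{y_{j+1},y_j}/g_{y_j}$; and, given ${\bf Y}'={\bf y}$, the independent $X'_j\sim\mathrm{Beta}(1,g_{y_j})$ with $P'_j=X'_j\prod_{i<j}(1-X'_i)$ transform — via $1-X'_j=R_j/R_{j-1}$ and the lower-triangular Jacobian $\prod_j R_{j-1}^{-1}$ — into the density $\prod_{j=1}^k g_{y_j}R_j^{\,g_{y_j}-1}R_{j-1}^{-g_{y_j}}$ for $(P'_1,\dots,P'_k)$. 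The product of these two equals the limiting density found above, so the finite-dimensional distributions coincide; one checks the total masses agree from the elementary identity $g_{y_k}\int_{1>\alpha_1>\cdots>\alpha_k>0}\prod_j\alpha_j^{g_{y_j}-1}\alpha_{j-1}^{-g_{y_j}}\,d\boldsymbol\alpha=\prod_{j<k}g_{y_j}^{-1}$, obtained by integrating out $\alpha_k,\alpha_{k-1},\dots$ successively.

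I expect the main obstacle to be input (ii): the convergence $\pi^t\prod_{M<m\le N}(I+G/m)\to\mu^t$ as $N\to\infty$, which over a countable state space is not a direct quotation of an ergodic theorem. I would isolate it as a separate lemma, writing $\pi^t=\mu^t+\rho^t$ with $\rho^t\mathbbm{1}=0$, noting that \eqref{convergence_condition} confines $\rho^t$ to the part of the spectrum of $Q$ strictly inside the unit disc (equivalently of $G$ with negative real part), and showing that the partial products contract there while fixing $\mu^t$ (as $\mu^t(I+G/m)=\mu^t$). The remaining work — routine but fiddly — is to promote the pointwise convergence of the lattice summands to genuine weak convergence: uniform control of the Riemann sums, and verifying that the boundary region, where some of the last $k$ sojourns stay of bounded length (a lattice artifact absent from the continuous limit), carries mass tending to $0$. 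I note that the appearance of the \emph{reversed} generator $G'$ is to be expected: seen backward from time $n$, ${\bf T}$ behaves like the $\mu$-reversal of $Q$, whose generator is precisely $G'$.
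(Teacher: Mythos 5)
Your overall plan is the same as the paper's: compute the finite-dimensional law of $(\mathbf{P}_n,\mathbf{Y}_n)$ by factoring over the last $k$ reversed switch times, send the product terms $\prod_m(1-g_y/m)$ to power-law limits by Stirling-type asymptotics, sum over the lattice of switch times to get the MCcGEM marginal, and extract the two conditional/marginal statements by disintegration. The paper works with CDF-type events $\{0<X_{n,j}\le\beta_j\}$ and a nested-sum asymptotic (Proposition \ref{f2}) rather than a Riemann-sum argument for densities, but that is only a change of parameterization.

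Two points, however, are genuine gaps in the proposal, both tied to $\mathcal X$ being allowed countable.

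First, your input (ii), the convergence $\mathbb{P}(T_{s}=\cdot)=\pi^t\prod_{M<m\le s}(I+G/m)\to\mu^t$. You correctly flag that this is not a direct ergodic theorem, but the spectral fix you sketch --- writing $\pi^t=\mu^t+\rho^t$, ``confining $\rho^t$ to the part of the spectrum of $Q$ strictly inside the unit disc,'' and contracting there --- is a finite-dimensional argument. For countable $\mathcal X$ the spectrum of $Q$ on $\ell^1(\mathcal X)$ need not be discrete, need not decompose, and ``strictly inside the unit disc'' is not a gap condition one can assume. The paper's Lemma \ref{lem_weak} avoids spectral theory entirely: it expands $\prod_{j=m+1}^n K_j$ as a polynomial $\sum_i [Q^i]_n\,Q^i$ with nonnegative coefficients summing to $1$, shows each fixed-degree coefficient $[Q^i]_n\to 0$, and then bounds $|\mu^n_l-\mu_l|$ by a finite chunk $\sum_{i\le R}[Q^i]_n$ plus $\max_{r>R}|(\mu^m-\mu)^t Q^r e_l|$, both of which vanish. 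That argument is what actually works here and should replace your spectral step.

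Second, your input (iii) is not a consequence of (ii) alone. You need $\sum_{w\ne y_k}\mathbb{P}(T_{s_k-1}=w)\,G_{w,y_k}\to\sum_{w\ne y_k}\mu_w G_{w,y_k}$, i.e.\ the convergence $(\mu^{s})^t Q e_{y_k}\to\mu^t e_{y_k}$, which for infinite $\mathcal X$ requires interchanging an infinite sum with a limit. Entry-wise convergence $\mathbb{P}(T_s=w)\to\mu_w$ gives only a Fatou lower bound. The paper isolates this as part (b) of Lemma \ref{lem_weak} and proves it by a Fatou-plus-stochasticity squeeze (Step 4 of that lemma's proof). Your write-up skips this step. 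Once you import both parts of that lemma, the rest of your argument --- the power-law limits, the Riemann sums, and the density matching with $\mu_{y_k}g_{y_k}\prod G_{y_{j+1},y_j}\prod R_j^{g_{y_j}-1}R_{j-1}^{-g_{y_j}}$ against the MCcGEM$(G')$ marginal --- lines up with the paper.
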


\begin{ex}
\rm
In the context of Example \ref{GEM_example}, suppose $G$ has constant diagonal entries $g$.  Then, the local occupations of the inhomogeneous Markov chain ${\bf P}_n$ would converge to a GEM$(-g)$ distribution, not just conditionally in terms of a MCcGEM distribution.
\end{ex}

We now characterize the limit occupation law of ${\bf T}$ in a `stick-breaking' form with respect to either a MCcGEM distribution, or a paired GEM distribution and homogeneous Markov chain.  In the following, weak convergence of $\nu_n$ is with respect to the discrete topology on $\Delta_\X$, the space of probability measures on $\X$.

\begin{theorem}[Occupation laws to MCcGEM and stick-breaking measures]
\label{occ cor} 
Consider the setting and assumptions of Theorem \ref{inhom to MCcGEM}.  Observe that $\mu$ is a stationary distribution of $Q'= I+ G'/\theta$, and let ${\bf T}'$ be the homogeneous and stationary Markov chain with kernel $Q'$ and initial distribution $\mu$.  Let ${\bf P}^+$ be a GEM$(\theta)$ sequence independent of ${\bf T}'$.

Then, $\nu_n = \left\langle\frac1n\sum_{j=1}^n\delta_{T_j}(l): l\in \mathcal{X}\right\rangle\xrightarrow{d} \nu$, where
\begin{equation}
\label{occ_cor_eqn}
\nu\stackrel d=\left\langle\sum_{j=1}^\infty P'_j\delta_{Y'_j}(l): l\in \mathcal{X}\right\rangle \stackrel d=\left\langle\sum_{j=1}^\infty P^+_j\delta_{T'_j}(l): l\in \mathcal{X}\right\rangle .
\end{equation}
\end{theorem}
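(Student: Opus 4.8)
The plan is to combine the weak convergence of the reversed local-occupation pairs from Theorem \ref{inhom to MCcGEM} with a continuity argument, and then to verify the second distributional identity in \eqref{occ_cor_eqn} by reconciling the MCcGEM$(G')$ description with a GEM$(\theta)$ sequence attached to a stationary chain with kernel $Q'$. First I would observe that the map taking a pair $(\mathbf p, \mathbf y) \in [0,1]^{\N} \times \mathcal X^{\N}$ (with $\sum_j p_j = 1$) to the measure $\sum_{j\ge1} p_j \delta_{y_j} \in \Delta_{\mathcal X}$ is, on the relevant set of pairs, continuous for the product topology on the domain and the discrete topology on $\Delta_{\mathcal X}$, at least when restricted to pairs for which the tail mass $\sum_{j > m} p_j$ is uniformly controllable. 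Since $\nu_n = \sum_{j\ge1} P_{n,j}\delta_{Y_{n,j}}$ exactly (this is the displayed identity preceding the theorem), and since $(\mathbf P_n, \mathbf Y_n) \xrightarrow{d} (\mathbf P', \mathbf Y')$ with $(\mathbf P', \mathbf Y')$ an MCcGEM$(G')$ pair by Theorem \ref{inhom to MCcGEM}, an application of the continuous mapping theorem — after first establishing tightness of the tail masses so that the limit measure is genuinely a probability measure on $\mathcal X$ and no mass escapes — gives $\nu_n \xrightarrow{d} \sum_{j\ge1} P'_j \delta_{Y'_j}$, which is the first identity in \eqref{occ_cor_eqn}.

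The tail-mass control is where I expect the main obstacle to lie: finite-dimensional convergence of $(\mathbf P_n, \mathbf Y_n)$ alone does not immediately rule out the possibility that $\nu_n$ loses mass in the limit along some states, so I would need a uniform integrability / tightness estimate showing $\limsup_n \mathbb E\big[\sum_{j>m} P_{n,j}\big] \to 0$ as $m\to\infty$. This should follow from the structure of the reversed clumps: going backward from time $n$, the normalized clump lengths $P_{n,j}$ behave asymptotically like the terms of a GEM$(\theta)$-type stick-breaking sequence (this is precisely the content of the disordered-GEM limit in Theorem \ref{inhom to MCcGEM}, whose parameters $-G'_{y_j,y_j}$ are uniformly bounded since $G'$ is a generator matrix), and for such sequences the expected residual mass $\mathbb E\big[\prod_{i\le m}(1-X_i)\big]$ decays geometrically. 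I would make this quantitative by comparing with the homogeneous case and using the boundedness $\sup_x |G'_{x,x}| < \infty$ to get a uniform-in-$n$ bound on the tail, then pass to the limit.

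For the second distributional identity, I would argue directly at the level of the random measures rather than the sequences, since the measure $\sum_j P'_j \delta_{Y'_j}$ forgets the ordering. The MCcGEM$(G')$ pair clumps a GEM$(\theta)$ sequence along the switch times of a Markov chain with some kernel whose diagonal-deleted normalization is $K_{G'}$; by Theorem \ref{Gem to MCcGEM} (applied with $Q' = I + G'/\theta$, noting $\mu$ is stationary for $Q'$), clumping a GEM$(\theta)$ sequence $\mathbf P^+$ along the switch times $\mathbf V'$ of the stationary chain $\mathbf T'$ produces exactly an MCcGEM$(\theta(Q'-I))$ = MCcGEM$(G')$ pair $(\mathbf P^{+,\mathbf V'}, \mathbf Y')$ with respect to $\mu$. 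Hence $(\mathbf P', \mathbf Y') \stackrel d= (\mathbf P^{+,\mathbf V'}, \mathbf Y')$, and therefore
\[
\sum_{j\ge1} P'_j \delta_{Y'_j} \;\stackrel d=\; \sum_{j\ge1} P^{+,\mathbf V'}_j \delta_{Y'_j} \;=\; \sum_{i\ge1} P^+_i \delta_{T'_i},
\]
where the last equality is the clumping identity \eqref{clumpedF}: each clumped block $P^{+,\mathbf V'}_j = \sum_{i=V'_j}^{V'_{j+1}-1} P^+_i$ sits at location $Y'_j = T'_{V'_j} = T'_i$ for every $i$ in that block, so the two sums assign identical mass to each state $l \in \mathcal X$. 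This closes the chain of identities in \eqref{occ_cor_eqn}. The only care needed is that the stationary chain $\mathbf T'$ (initial law $\mu$, kernel $Q'$) has no absorbing states — guaranteed because $G'$ has no zero rows on the support of $\mu$, and states outside the support are irrelevant since $\mu$ is stationary — so that the switch times $\mathbf V'$ are a.s. finite and the clumping device applies verbatim.
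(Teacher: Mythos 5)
Your approach is essentially the paper's: apply the continuous mapping theorem to the map $g\colon (p,y)\mapsto \sum_j p_j\delta_{y_j}$ via Theorem~\ref{inhom to MCcGEM}, and establish the second distributional identity by clumping the GEM$(\theta)$ sequence along the switch times of $\mathbf{T}'$ via Theorem~\ref{Gem to MCcGEM} followed by ``unclumping''; that second half of your argument matches the paper step for step.

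The one place you over-engineer is the first identity. You worry that finite-dimensional convergence of $(\mathbf{P}_n,\mathbf{Y}_n)$ might allow mass to escape and propose to establish a quantitative, uniform-in-$n$ bound of the form $\limsup_n \mathbb{E}\bigl[\sum_{j>m}P_{n,j}\bigr]\to 0$. No such separate estimate is needed, and the paper proves none. The key point is that $g$ is continuous at \emph{every} point of $\Delta_\infty\times\X^\N$ in the subspace topology: if $(p^n,y^n)\to(p,y)$ with all terms summing to one, then $\sum_{j\geq A}p^n_j = 1-\sum_{j<A}p^n_j \to 1-\sum_{j<A}p_j = \sum_{j\geq A}p_j$, which is made small by choosing $A$ large because $p\in\Delta_\infty$; this is a pointwise statement that requires no uniform-in-$n$ moment bound. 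Since $(\mathbf{P}_n,\mathbf{Y}_n)$ lies in $\Delta_\infty\times\X^\N$ surely (each $\nu_n$ is an honest empirical probability measure) and $(\mathbf{P}',\mathbf{Y}')$ lies there almost surely (because $\mathbf{P}'$ is a disordered GEM with uniformly bounded parameters, hence a RAM, so $\sum_j P'_j=1$ a.s.), the continuous mapping theorem applies directly: the discontinuity set of (any measurable extension of) $g$ is $\bigl([0,1]^\N\setminus\Delta_\infty\bigr)\times\X^\N$, which has limit-measure zero. Your ``hedge'' that continuity holds ``at least when restricted to pairs for which the tail mass is uniformly controllable'' is unnecessary --- continuity holds at every point of $\Delta_\infty\times\X^\N$ --- and the comparison-to-homogeneous-case quantitative estimate you sketch would be extra work with no payoff here.
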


In a sense, reversing the procedure, starting from the stick-breaking process $\sum_{j\geq 1} P^+_j \delta_{T'_j}$, we may identify it as the limit of the occupation measure of a matched time-inhomogeneous Markov chain, almost a corollary of Theorem \ref{occ cor}.

\begin{theorem}[Stick-breaking measures to Occupation laws]\label{occ cor2} 
Let $\theta>0$ and ${\bf P}^+$ is a GEM$(\theta)$ sequence.  Let also $\tilde Q$ be a stochastic matrix without absorbing states and with stationary distribution $\mu$.
Suppose ${\bf T}'$ is an independent homogeneous Markov chain with kernel $\tilde Q$ starting from $\mu$.  
 
 Then,
\begin{equation*}
\left\langle \sum_{j=1}^\infty P^+_j \delta_{T'_j}(l): l\in \X\right\rangle \stackrel d = \nu,
\end{equation*}
where $\nu\stackrel d = \lim_{n\rightarrow\infty}\nu_n$ is the occupation law defined with respect to an inhomogeneous Markov chain ${\bf T}$, as in the setting of Theorem \ref{inhom to MCcGEM}, with respect to generator matrix $\tilde G'$, starting from any distribution $\pi$ satisfying $\pi^t(\tilde Q')^n\rightarrow\mu^t$ entry-wise.  Here, $\tilde G'$ and $\tilde Q'$ are given by
 $\tilde G_{ij}' = \big(\mu_j/\mu_i\big)\tilde G_{j,i}{\mathbbm 1}(\mu_i\neq 0)$ where $\tilde G = \theta(\tilde Q-I)$, and $\tilde Q' = I + \tilde G'/\theta$. \end{theorem}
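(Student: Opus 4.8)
The plan is to read this statement as Theorem~\ref{occ cor} run in reverse: rather than feeding Theorem~\ref{occ cor} the generator $\tilde G$, I would feed it the twisted generator $\tilde G'$, and then check that the matrix Theorem~\ref{occ cor} produces from it — the matrix it calls $G'$, built via the recipe \eqref{G'_eqn} — is again $\tilde G$. Granting that, the homogeneous stationary chain appearing in \eqref{occ_cor_eqn} has kernel $I+\tilde G/\theta=\tilde Q$ started from $\mu$, the accompanying GEM parameter is $\theta$, and so the right-hand side of \eqref{occ_cor_eqn} is exactly $\langle\sum_{j\ge 1}P^+_j\delta_{T'_j}(l):l\in\X\rangle$, which is the claim.

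First I would cut down to the state space $\X_0:=\mathrm{supp}(\mu)$. Since $\mu$ is $\tilde Q$-stationary, $\tilde Q$ restricted to $\X_0$ is again stochastic (if $i\in\X_0$ and $\tilde Q_{ij}>0$ then $\mu_j\ge\mu_i\tilde Q_{ij}>0$), so ${\bf T}'$ started from $\mu$ stays in $\X_0$ almost surely. Also, every entry $\tilde G'_{ij}$ carries a factor $\mu_j$, so the rows of $\tilde G'$ indexed by $\X_0$ are supported on $\X_0$, while every state outside $\X_0$ is absorbing for $\tilde Q'=I+\tilde G'/\theta$; hence $\pi^t(\tilde Q')^n\to\mu^t$ forces $\pi$ to be supported on $\X_0$, and the inhomogeneous chain ${\bf T}$ driven by $I+\tilde G'/n$ also remains in $\X_0$. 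Thus both random measures in the statement live on $\X_0$, and it suffices to argue there, where $\tilde G'_{ii}=\tilde G_{ii}=\theta(\tilde Q_{ii}-1)<0$ (no absorbing states), so $\tilde G'$ has no zero rows.

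Next I would verify that $G:=\tilde G'$ on $\X_0$, together with the given $\theta$, a large enough $M$, and $Q:=\tilde Q'=I+\tilde G'/\theta$, meets the hypotheses of Theorem~\ref{inhom to MCcGEM} (hence of Theorem~\ref{occ cor}): $\tilde G'$ is a generator with uniformly bounded diagonal — a two-line check using $\mu^t\tilde G=0$, which holds because $\mu$ is $\tilde Q$-stationary; $\tilde Q'$ is a genuine stochastic matrix, its off-diagonal entries being $(\mu_j/\mu_i)\tilde G_{ji}/\theta\ge 0$ and its diagonal entries $\tilde Q_{ii}\in[0,1)$; $\mu$ is $\tilde Q'$-stationary; and $\pi^t(\tilde Q')^n\to\mu^t$ by hypothesis. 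The one point needing attention is the strict inequality $\theta>\inf\{r:I+r^{-1}\tilde G'\ \text{non-negative}\}$, which is the non-degeneracy requirement already built into Theorem~\ref{inhom to MCcGEM} and which I would simply carry over (it is automatic when $\X$ is finite).

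The algebraic heart is then the involution: applying \eqref{G'_eqn} a second time, the generator Theorem~\ref{inhom to MCcGEM} associates to $\tilde G'$ has $(i,j)$ entry $(\mu_j/\mu_i)\tilde G'_{ji}\mathbbm 1(\mu_i\neq 0)=(\mu_j/\mu_i)(\mu_i/\mu_j)\tilde G_{ij}\mathbbm 1(\mu_i\neq 0,\mu_j\neq 0)=\tilde G_{ij}$ on $\X_0$, so its ``$Q'$'' is $I+\tilde G/\theta=\tilde Q$ and its driving chain is the homogeneous stationary chain with kernel $\tilde Q$ from $\mu$. Feeding this into Theorem~\ref{occ cor} gives $\nu_n\xrightarrow{d}\nu$ with $\nu\stackrel d=\langle\sum_{j\ge 1}P^+_j\delta_{T'_j}(l):l\in\X\rangle$ for ${\bf P}^+\sim$ GEM$(\theta)$ independent of ${\bf T}'$, which is what we want. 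I expect the only genuine work to be the bookkeeping in the first two steps — the reduction to $\mathrm{supp}(\mu)$ and the confirmation that reducibility and transience of $\tilde Q$ do no harm — since the analytic content is entirely inside Theorems~\ref{inhom to MCcGEM} and \ref{occ cor}, and the remaining content is just the double application of \eqref{G'_eqn}.
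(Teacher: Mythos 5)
Your proposal is correct and follows essentially the same route as the paper's proof: apply Theorem~\ref{occ cor} with $G=\tilde G'$, compute via the double application of \eqref{G'_eqn} that $(\tilde G')'_{ij}=\tilde G_{ij}\mathbbm 1(\mu_i\neq 0)\mathbbm 1(\mu_j\neq 0)$, and observe that a $\tilde Q$-chain and a $(\tilde Q')'$-chain started from $\mu$ coincide in law because $\mu$ is supported on (and confined to) states where the two kernels agree. Your somewhat longer bookkeeping about restricting to $\mathrm{supp}(\mu)$ is simply an expansion of the paper's one-sentence observation about recurrence classes, and the concern you flag about the strict inequality $\theta>\inf\{r:I+r^{-1}\tilde G'\text{ non-negative}\}$ is a hypothesis the paper also carries over implicitly rather than re-verifying.
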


In the next two subsections, we discuss remarks on Theorems \ref{inhom to MCcGEM} and \ref{occ cor},
and a case when the random measure $\nu$ is a Dirichlet process.
 
\subsubsection{Remarks}
\label{remarks subsection}
We now make several comments on Theorems \ref{inhom to MCcGEM} and \ref{occ cor}.
\vskip .1cm

{\bf 1.} Although we have specified that $G$ has no zero rows in Theorems \ref{inhom to MCcGEM} and \ref{occ cor}, and therefore no absorbing states for ${\bf T}$, one can extend some of the statements trivially to the case when there are absorbing states.  In particular, when the limit $\mu$ is the unit point mass at an absorbing state $z$ of $Q$, we have $G'_{z,z}=G_{z,z}=0$ and $K_n(z,z)=K_{G'}(z,z)=1$.  Then, the state $z$ is also an absorbing state for the inhomogeneous Markov chain ${\bf T}$, reached in finite time a.s. starting from $\pi$.  Also, the chain ${\bf T}'$, starting from $\mu$, is the constant sequence of $z$'s.  In addition, the limit of $Y_{n,1}$ is $z$, and $P_{n,1}$ tends to $1$ a.s.    
We conclude that ${\bf P}_n$ converges weakly to ${\bf P}'=\langle 1, 0,\ldots\rangle$, a GEM with constant fractions $1={\rm Beta}(1, 0)$.  Moreover, the empirical distribution $\nu_n$ of the chain ${\bf T}$ converges weakly to $\delta_z$.  We also observe that $\sum_{j\geq 1} P'_j\delta_{Y_j'}$, and also $\sum_{j\geq 1}P_j^+\delta_{T'_j}$ both equal $\delta_z$ in distribution.

\vskip .1cm
{\bf 2.} 
There is a degree of freedom in picking a pair $(\theta,Q)$. However, when specifying a MCcGEM distribution, each valid pair corresponds to the same generator matrix $G$ in this context. On the other hand, this family of pairs $({\bf P}^+,{\bf T}')$ of a GEM distribution and Markov chain, indexed in $\theta$, will have different joint distributions, although they all correspond to a single measure $\nu$.  We explore this notion in the case of Dirichlet processes in Subsection \ref{dirichlet_subsection} below.

\vskip .1cm

{\bf 3.} The convergence \eqref{convergence_condition} is a condition on the structure of positive recurrent states of the homogeneous Markov chain ${\bf T^Q}$ run with kernel $Q=I+G/\theta$.  Since the limit $\mu$ is a stationary distribution with respect to $Q$, the chain must have a positive recurrent state, and $\mu$ is positive only on such states.   The initial distribution $\pi$ must be such that observation of a positive recurrent state occurs with probability 1.

 In general, $\mu$ depends on $\pi$ when there is more than one irreducible class of positive recurrent states.  We note, along with positive recurrent states, there may also be null recurrent and transient states associated with $Q$.

In the case that $Q$ has a single class of positive recurrent states, then $\mu$ will be the unique stationary distribution associated with $Q$ and will not depend on $\pi$.
 
 It could be that $Q$ has an infinite number of null recurrent or transient states, in addition to positive recurrent states.   But, the requirement that $\mu$ be stochastic means that the chain ${\bf T^Q}$ cannot visit a null recurrent state or remain indefinitely on transient states a.s.  This reflects that the limit of $({\bf P_n}, {\bf Y_n})$ corresponds to the long time average occupations of states in ${\mathcal X}$.

\vskip .1cm

{\bf 4.}
Any null recurrent or transient state of the chain run with $Q$ corresponds to a zero row of $G'$ or in other words an absorbing state for the chains ${\bf T}'$ and ${\bf Y}'$.
However, such absorbing states are never visited by ${\bf T}'$:
The initial distribution $\mu$ is a stationary distribution of $Q$, which vanishes on these states.  Moreover, as $\mu$ is also a stationary distribution of $Q'$, the chain ${\bf T}'$ can only move on the positive recurrent states of ${\bf T^Q}$, the states $\{i\in \mathcal{X}: \mu_i>0\}$.

Similarly, starting from $\mu$, the chain ${\bf Y}'$ moves only on states $\{i\in \mathcal{X}: \mu_i>0\}$, given that $G'_{w,z}=K_{G'}(w,z)=0$ when either $\mu_z=0$ or $\mu_w=0$ and $w\neq z$.
  
Also, we comment that the chain ${\bf T}'$ run with $Q'$ is a form of time-reversal of ${\bf T^Q}$ with respect to stationary distribution $\mu$, reflecting the reverse chronological construction of the ${\bf Y}_n$ sequences.

\subsubsection{Dirichlet process limits}
\label{dirichlet_subsection}
In a particular case of Theorem \ref{occ cor}, we observe that we may recover Dirichlet processes.  Suppose $\mu(i)>0$ for all $i\in \X$.
When 
$Q$ has constant rows equal to $\mu^t$, the Markov chain ${\bf T'}$ has transition kernel $Q'=Q$, and therefore ${\bf T}'$ is an iid sequence with common distribution $\mu$. Then, $\nu= \sum_{j\geq 1} P^+_j \delta_{T'_j}$, formed from a GEM$(\theta)$ sequence ${\bf P}^+$ and an independent sequence of iid random variables ${\bf T}'$, is the `stick-breaking' representation of a Dirichlet process with parameters $\theta$ and measure $\mu$ on the discrete space $\mathcal{X}$ (cf. \cite{JSethuraman}). Specifically, as noted in the introduction, when $\mathcal{X}$ is finite we have that $\nu$ is a Dirichlet distribution with parameters $\{\theta \mu_j\}_{j\in \mathcal{X}}$. (cf. \cite{Donnelly_Tavare}, \cite{Hirth}).

Moreover, since the distribution of $\nu$ is determined by $G$, there is a degree of freedom in specifying $G$ via a pair $(\theta,Q)$.  Write $G$ in two forms: (1) $G=\theta(Q-I)$ where $\theta>0$ and $Q$ is stochastic with constant rows $\mu^t$, and also (2) $G=\tilde\theta(\tilde Q-I)$ where $\tilde\theta>0$, $\theta\neq\tilde\theta$, and $\tilde Q$ is stochastic. Then again, $\tilde Q=\tilde Q'$ and via Theorem \ref{occ cor}, we recover a different stick-breaking representation, 
$\sum_{j=1}^\infty P_j^{\tilde \theta}\delta_{T^{\tilde Q}_j}$, of the Dirichlet process with parameters $\theta$ and $\mu$, in terms of GEM$(\tilde\theta)$ sequence $\mathbf P^{\tilde \theta}$ and an independent homogeneous Markov chain $\mathbf{T^{\tilde Q}}$ with $T^{\tilde Q}_1\sim\mu$ and kernel $\tilde Q$.  

Here, $\tilde Q=\frac\theta{\tilde\theta}Q+(1-\frac\theta{\tilde\theta})I$ is the weighted average of $Q$ and $I$.  Since $\tilde Q$ no longer has constant rows, $\mathbf{T^{\tilde Q}}$ no longer consists of iid variables. The chain ${\bf T^{\tilde Q}}$ is, in a sense, a more or less `sticky' version of an iid $\sim\mu$ sequence depending on the weight of $I$ in the weighted average relation for $\tilde Q$.

\subsection{Self-similarity of the occupation laws}
At this point, it is natural to ask for other ways to understand the laws in Theorem \ref{occ cor}.   Consider the general random measure
\begin{equation}
\label{nu_def}
\nu \stackrel d= \left\langle\sum_{j=1}^\infty P_j\delta_{T_j}(l): l\in \mathcal{X}\right\rangle,
\end{equation}
where ${\bf P}$ is a self-similar RAM composed of 
fractions ${\bf X}$,
and ${\bf T}$ is an independent homogeneous Markov chain with transition kernel $Q$ and initial distribution $\mu$, assigning zero probability to any transient state of $Q$.  
We remark that $\nu$ reduces to the measure in Theorem \ref{occ cor} when ${\bf P}\sim$ GEM$(\theta)$ and $\mu$ is a stationary vector of $Q$. 
We first discuss an example.

\begin{ex}\rm
As we have noted earlier, if ${\bf P}\sim$ GEM$(\theta)$ and ${\bf T}$ is an independent sequence of iid variables with distribution $\mu$, the measure $\nu$ is the `stick-breaking' representation of the Dirichlet process with parameters $\theta$ and measure $\mu$ on $\mathcal{X}$. Following \cite{JSethuraman}, a self-similarity relation can be deduced:
$$\nu \stackrel d= X_1\delta_{T_1} + (1-X_1)\tilde \nu,$$
where $\tilde\nu\stackrel{d}{=}\nu$ is another random measure, and $X_1\sim {\rm Beta}(1,\theta)$, $T_1\sim \mu$ and $\tilde \nu$ are independent.  From such an equation, the Dirichlet process characterization of $\nu$ with parameters $\theta$ and measure $\mu$ on $\mathcal{X}$ follows from classical considerations.  Moreover, this relation is central in calculation of a posterior distribution, given say $X_1$, when $\nu$ is thought of as a law on priors.  See also the recent work \cite{Last} and \cite{SL} on related integral characterizations.  
\end{ex}

We now define a more general notion of self-similarity.  This notion is well known (cf. \cite{Goldie} among other references).  With respect to a measurable space $(\mathcal{A}, \mathcal{B}_{\mathcal{A}})$, let $\mathbb{P}_{\mathcal{A}}$ be the space of probability measures on $(\mathcal{A}, \mathcal{B}_{\mathcal{A}})$.  Let $\mathbb{F}_{\mathcal{A}}$ be the smallest $\sigma$-field generated by sets of the form $\Big\{\{\chi: \chi(A)<r\}: A\in \mathcal{B}_{\mathcal{A}}, r\in [0,1]\Big\}$.

\begin{defi}[Self-similar random measure]\label{self} We say that the law of a random distribution $\chi$ on $(\mathbb{P}_{\mathcal{A}}, \mathbb{F}_{\mathcal{A}})$ is self-similar with respect to $(\eta,X)$ if it satisfies
\begin{equation}
\label{self_eqn}
 \chi(\cdot)\stackrel d=X\eta(\cdot )+(1-X) \tilde\chi( \cdot ),
 \end{equation}
where $X$ is a $[0,1]$-valued random variable, $\eta$ is a random distribution on $\mathbb{P}_{\mathcal{A}}$, and $\tilde \chi$ is random measure with the same distribution as $\chi$ and independent of $(\eta,X)$, defined on the space $[0,1]\times \mathbb{P}_{\mathcal{A}}\times \mathbb{P}_{\mathcal{A}}$.
\end{defi}

The key is that such self-similarity may uniquely identify a distribution.  The following is part of Lemma 3.3 in \cite{JSethuraman}; see also \cite{Goldie} for more involved statements.  For the convenience of the reader, a proof is given in Subsection \ref{self_similar}.

\begin{lemma}\label{unique}
There exists a unique in law self-similar random measure $\chi$ on $(\mathbb{P}_{\mathcal{A}}, \mathbb{F}_{\mathcal{A}})$ with respect to $(\eta,X)$ when $\mathcal P(X=0)<1$.
\end{lemma}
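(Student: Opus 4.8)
The plan is to prove uniqueness by iterating the self-similarity relation \eqref{self_eqn} and showing the ``remainder'' term vanishes. Suppose $\chi$ and $\chi'$ are both self-similar with respect to $(\eta, X)$, where $\mathcal{P}(X=0)<1$. First I would set up independent copies: let $(X_j)_{j\geq 1}$ be iid copies of $X$, let $(\eta_j)_{j\geq 1}$ be iid copies of $\eta$ (coupled with the $X_j$ so that $(X_j,\eta_j)$ has the joint law of $(X,\eta)$, these pairs independent across $j$), and let $\tilde\chi$ be a copy of $\chi$ independent of everything. Unfolding \eqref{self_eqn} exactly $n$ times gives, in distribution,
\begin{equation*}
\chi(\cdot) \stackrel{d}{=} \sum_{j=1}^{n} X_j\Big(\prod_{i=1}^{j-1}(1-X_i)\Big)\eta_j(\cdot) + \Big(\prod_{i=1}^{n}(1-X_i)\Big)\tilde\chi(\cdot),
\end{equation*}
and the identical identity holds for $\chi'$ with the same first sum and a remainder $\big(\prod_{i=1}^n(1-X_i)\big)\tilde\chi'$. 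The point is that the ``head'' is the same random object in both expansions, so the laws of $\chi$ and $\chi'$ differ by at most the contribution of the tail.

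The key step is to show $\prod_{i=1}^n (1-X_i)\to 0$ a.s. as $n\to\infty$. This is exactly the RAM summability criterion already recorded after \eqref{fraction_identity}: since $\mathcal{P}(X=0)<1$ and the $X_i$ are iid and nontrivial, $\E[\log(1-X_1)]<0$ (allowing $-\infty$), so by the strong law of large numbers $\frac1n\sum_{i=1}^n \log(1-X_i)\to \E[\log(1-X_1)]<0$, whence $\prod_{i=1}^n(1-X_i)\to 0$ a.s. Because $\tilde\chi$ and $\tilde\chi'$ are probability measures on $\mathcal A$, for any fixed $A\in\mathcal B_{\mathcal A}$ the tail terms $\big(\prod_{i=1}^n(1-X_i)\big)\tilde\chi(A)$ and $\big(\prod_{i=1}^n(1-X_i)\big)\tilde\chi'(A)$ are bounded by $\prod_{i=1}^n(1-X_i)\to 0$ a.s., hence $\to 0$ in probability. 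Meanwhile the head converges a.s. to $\sum_{j\geq 1} X_j\big(\prod_{i=1}^{j-1}(1-X_i)\big)\eta_j(A)$, a well-defined random variable in $[0,1]$ (the weights sum to $1$ a.s. by \eqref{fraction_identity} and the just-proved tail fact). Therefore $\chi(A)$ and $\chi'(A)$ both equal this same limit in distribution, so $\chi(A)\stackrel{d}{=}\chi'(A)$ for every $A$. Upgrading to equality of finite-dimensional distributions $\langle\chi(A_1),\ldots,\chi(A_m)\rangle$ is identical: the head is a single random vector and the tail vector is coordinatewise dominated by $\prod_{i=1}^n(1-X_i)\to 0$. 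Since $\mathbb F_{\mathcal A}$ is generated by the maps $\chi\mapsto \chi(A)$, agreement of all finite-dimensional distributions gives equality of laws on $(\mathbb P_{\mathcal A},\mathbb F_{\mathcal A})$.

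For existence, I would simply exhibit the candidate: define $\chi(\cdot) = \sum_{j\geq 1} X_j\big(\prod_{i=1}^{j-1}(1-X_i)\big)\eta_j(\cdot)$ with the iid structure above; this is a.s. a probability measure on $\mathcal A$ by \eqref{fraction_identity} together with $\prod_{i=1}^n(1-X_i)\to 0$ a.s., and it satisfies \eqref{self_eqn} by peeling off the $j=1$ term, since $\sum_{j\geq 2} X_j\big(\prod_{i=1}^{j-1}(1-X_i)\big)\eta_j = (1-X_1)\tilde\chi$ where $\tilde\chi \stackrel{d}{=}\chi$ is independent of $(X_1,\eta_1)$. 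The main obstacle is purely the a.s.\ vanishing of the tail product $\prod_{i=1}^n(1-X_i)$ and making sure the measurability/$\sigma$-field bookkeeping on $(\mathbb P_{\mathcal A},\mathbb F_{\mathcal A})$ is clean; everything else is a direct unfolding of the defining relation. (As the excerpt notes, this is essentially Lemma 3.3 of \cite{JSethuraman}.)
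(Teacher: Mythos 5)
Your proof is correct and takes essentially the same route as the paper: both construct the stick-breaking measure $\sum_{j\ge 1} X_j\prod_{i<j}(1-X_i)\,\eta_j$ for existence, and both reduce uniqueness to the a.s.\ vanishing of $\prod_{i\le n}(1-X_i)$. The only cosmetic difference is that the paper couples $\chi^a$ and $\chi^b$ through a single recursion driven by the same $\{(\eta_j,X_j)\}$ and controls $\|\chi^a_j-\chi^b_j\|$ in total variation, whereas you unfold the relation $n$ times and compare finite-dimensional distributions of evaluations (then invoke that $\mathbb F_{\mathcal A}$ is generated by the evaluation maps); the two arguments are equivalent.
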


\medskip
We now state that $\nu$ defined in \eqref{nu_def} is self-similar in a certain way. Let ${\bf X}=\{X_j\}_{j\geq 1}$ be the iid fractions from which ${\bf P}$ is constructed.  For each recurrent state $i$ of $Q$, let ${\bf T}^{i}$ be a Markov chain with transition kernel $Q$ and initial value $T^{i}_1=i$, independent of ${\bf X}$ and $(\nu,T_1)$. Define the finite cycle length and associated clumped residual fraction,
$$W^i:=\inf\{j>1:T_j^i=i\} \ \ {\rm and \ \ } X^i:=\sum_{j=1}^{W^i-1}X_j\prod_{l=1}^{j-1}(1-X_l).$$

Set
$$\eta^i:=\left(X^i\right)^{-1}\sum_{j=1}^{W^i-1}\left[X_j\prod_{l=1}^{j-1}(1-X_l)\right]\delta_{T_j^i} \ \ {\rm and \ \ } \nu^i := \nu\bigr | T_1=i.$$

\begin{theorem}[Type of self-similarity]
\label{selfsimgem}
The law of $(\nu,T_1)$ uniquely satisfies the following:   
Marginally, $T_1\sim \mu$ and, for each 
recurrent 
state $i$ of $Q$,
\begin{equation}
\label{self_similarity equation}
\nu^i \stackrel d= X^i\eta^i + (1-X^i)\tilde \nu^i,
\end{equation}
where $\tilde \nu^i$ is random measure with the same law as $\nu^i$, such that $\tilde \nu^i$ and $(\eta^i,X^i)$ are independent.
\end{theorem}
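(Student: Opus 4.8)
The plan is to establish two things: first that the displayed self-similarity relation \eqref{self_similarity equation} genuinely holds for the law of $(\nu, T_1)$, and second that it pins down that law uniquely. For the second part I would invoke Lemma \ref{unique}, applied not to $\nu$ itself but to the conditioned measures $\nu^i = \nu | T_1 = i$ over recurrent states $i$; once each $\nu^i$ is uniquely characterized as the self-similar measure with respect to $(\eta^i, X^i)$ — and one checks $\mathcal P(X^i = 0) < 1$, which follows because $X^i \geq X_1 \sim {\rm Beta}(1,\theta)$ is nontrivial and $W^i \geq 2$ — the marginal $T_1 \sim \mu$ then reassembles the joint law of $(\nu, T_1)$ uniquely. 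So the real work is verifying the distributional identity.

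For that, I would condition on $\{T_1 = i\}$ with $i$ recurrent, so that the cycle length $W^i = \inf\{j > 1 : T_j = i\}$ is a.s. finite. Decompose the Markov chain ${\bf T}$ at its first return to $i$: write ${\bf T} = (T_1, \ldots, T_{W^i - 1}, T_{W^i}, T_{W^i+1}, \ldots)$ where the post-return segment $(T_{W^i}, T_{W^i+1}, \ldots)$ is, by the strong Markov property, again a copy of ${\bf T}$ started from $i$, independent of the pre-return segment $(T_1, \ldots, T_{W^i - 1})$ and of $W^i$. Correspondingly, split the sum $\nu = \sum_{j \geq 1} P_j \delta_{T_j}$ into the first $W^i - 1$ terms and the rest. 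Using the RAM identity $P_j = X_j \prod_{l=1}^{j-1}(1 - X_l)$ and the self-similarity (iid-ness) of the fractions ${\bf X}$, the tail $\sum_{j \geq W^i} P_j \delta_{T_j}$ factors as $\big(\prod_{l=1}^{W^i - 1}(1 - X_l)\big) \cdot \tilde\nu^i$ where $\tilde\nu^i$ is built from the shifted fractions $\{X_{W^i + m}\}_{m \geq 0}$ and the shifted chain, hence $\tilde\nu^i \stackrel d= \nu^i$ and is independent of the pre-return data. Since $\prod_{l=1}^{W^i-1}(1-X_l) = 1 - X^i$ by the definition of $X^i$ together with identity \eqref{fraction_identity}, and the head $\sum_{j=1}^{W^i - 1} P_j \delta_{T_j}$ is exactly $X^i \eta^i$ by the definition of $\eta^i$, we obtain $\nu^i \stackrel d= X^i \eta^i + (1 - X^i)\tilde\nu^i$ with the required independence.

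The main obstacle I anticipate is handling the conditioning and the independence structure carefully: the quantities $X^i$, $\eta^i$, and $W^i$ all depend jointly on the pre-return fractions and pre-return chain, while $\tilde\nu^i$ must be shown independent of the triple $(\eta^i, X^i)$ — not merely of $X^i$ — and this requires being precise about what $\sigma$-algebra the strong Markov splitting produces and that the fractions ${\bf X}$ are split at the (random but fraction-independent) index $W^i$. A secondary technical point is the a.s. convergence of the tail series and the legitimacy of pulling the scalar $\prod_{l=1}^{W^i-1}(1-X_l)$ out of the infinite sum, which is routine given that ${\bf P}$ is a RAM (so the total mass is $1$ a.s.) but should be noted. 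Finally, one should remark that states $i$ with $\mu_i = 0$ (transient or null recurrent, which $\mu$ excludes) play no role, so restricting \eqref{self_similarity equation} to recurrent $i$ in the support of $\mu$ loses nothing.
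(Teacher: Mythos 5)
Your proposal matches the paper's proof essentially step for step: decompose $\nu^i$ at the first return time $W^i$, use the RAM product form $P_j = X_j\prod_{l<j}(1-X_l)$ together with the iid-ness of ${\bf X}$ and its independence from ${\bf T}$ to identify the rescaled tail $\sum_{l\geq W^i}\frac{P_l}{1-X^i}\delta_{T^i_l}$ as a copy of $\nu^i$ independent of $(X^i,\eta^i)$, and then invoke Lemma \ref{unique} on each $\nu^i$ together with $T_1\sim\mu$ for uniqueness. One small imprecision: the theorem applies to any self-similar RAM, so $X_1$ need not be ${\rm Beta}(1,\theta)$, but the needed condition $\mathcal P(X^i=0)<1$ still holds since $X^i\geq X_1$ and the fractions of a self-similar RAM are necessarily nontrivial.
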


If $\nu$ is thought of as a distribution on priors, the notion of a posterior distribution given a cycle of data $X^i$ might be considered from the self-similarity \eqref{self_similarity equation}. However, we remark that such a computation does not seem as tractable as in the case $\nu$ is a Dirichlet process (cf. \cite{JSethuraman}).

One might ask what happens when starting from a transient state $T_1 = i$.  In this case, there is positive chance that one will not return to $T_1$.  As above, one may write down a first `cycle' decomposition 
but, because $W^i$ may not be finite, the decomposition does not immediately lead to a `self-similarity' equation as in \eqref{self_similarity equation}. 
However, 
one might consider a stick-breaking construction, on a different probability space, which does lead to a `self-similarity' equation.  Indeed, following the discussion after Theorem \ref{RAM}, consider for transient states $T_1=i$ an iid sequence of pairs
$\{(Z_j,\eta_j)\}_{j\geq 1}$ with common distribution $(X^i,\eta^i)\bigr | T_1=i$, and form a stick-breaking construction, which after an exercise is seen to be equivalent-in-distribution to $\nu^i$: 
$$\nu^i\stackrel d=\left\langle\sum_{j=1}^\infty \eta_j(l)Z_j\prod_{r=1}^{j-1}(1-Z_r):l\in\mathcal{X}\right\rangle.$$
Then, $\nu^i \stackrel d= Z_1\eta_1 + (1-Z_1)\tilde\nu^i$ where $\tilde\nu^i\stackrel d= \nu^i$, and $\tilde\nu^i$ and $(\eta_1, Z_1)$ are independent.

\subsection{Moments of the occupation laws}

We first recall Theorems 1.3 and 1.4 in \cite{DS}: Suppose $G$ is a generator matrix on finite state space $\mathcal{X}=\{1,2,...,k\}$ with no 0 entries. By identification through its moments, the limiting occupation random variable $\nu=\langle\nu_1,...,\nu_k\rangle$ (cf. \eqref{occ_cor_eqn}) of an inhomogeneous Markov chain $\mathbf{T}$ with kernels of the form $K_n=I+\frac{G}{n}1(n>M)$ was found:  Let $\N_0 = \{0,1,2,\ldots\}$ be the whole numbers. For ${\bf m}=(m_1,\ldots, m_k)\in\N_0^k$ and $N=\sum_{j=1}^k m_j>0$, we have
\begin{equation}
\label{DS_moments}
\E\left(\prod_{j=1}^k \nu_j^{m_j}\right)={N\choose m_1,...,m_k}^{-1}\sum_{\sigma\in\mathbb{S}({\bf m})}\mu_{\sigma_1}\prod_{j=1}^{N-1}\left[\left(I-\frac Gj\right)^{-1}\right]_{\sigma_j,\sigma_{j+1}}
\end{equation}
where $\mu$ is the unique stochastic eigenvector of $G$, and $\mathbb{S}({\bf m})$ is the set of ${N\choose m_1,...,m_k}$ distinct permutations of the list of $N$ integers consisting of $m_1$ many 1's, $m_2$ many 2's, up through $m_k$ many $k$'s.

In particular, when $G$ can be written $\theta(Q-I)$ where $\theta>0$ and $Q$ is the stochastic matrix with constant rows $\mu$, the expectation reduces to the moments of the Dirichlet distribution with parameters $\theta\mu$: $\E\left[\prod_{j=1}^k\nu_j^{m_j}\right]=((\theta)_N)^{-1}\prod_{j=1}^k(\theta\mu_j)_{m_j}$ where 
$(a)_n={\Gamma(a+n)}/{\Gamma(a)}$ 
is the Pochhammer symbol, that is a rising factorial.  However, when $G$ is not of this form and $k\geq 3$, one can see that the moments may not describe a Dirichlet distribution.

In this context, 
we detail now some more descriptions of these laws.  Observe that the matrix $\big(I-G/j\big)^{-1}$ for $j\geq 1$ is a resolvent operator with respect to the transition function $\{\P^G_s: s\geq 0\}$ of a continuous time Markov chain with generator $G$.  In particular, it is standard to write
$$\left(I-\frac{G}{j}\right)^{-1}_{l,m} = \int_0^\infty je^{-js}\P^G_s(l,m)ds.$$
As a consequence, $\widetilde K_j:=\big(I-G/j\big)^{-1}$ itself is a stochastic kernel on $\X$.  

In the Dirichlet case $G=\theta(Q-I)$, where $\theta>0$ and each row of $Q$ is the stationary measure $\mu$, one can see by calculating via the backward equation 
$\frac{d}{ds}\P^G_s = G\P^G_s$ and $\P^G_0 = I$ that 
$$\widetilde K_j=I+\frac{G}{j+\theta}.$$

 More generally, let ${\bf Z}=\{Z_k\}_{k\geq 1}$ be the inhomogeneous Markov chain with initial distribution $\mu$ and transition kernels $\widetilde K_n$ for $n\geq 1$.

We first observe a type of `duality' relation between the moments of $\nu$ and ${\bf Z}$.

\begin{theorem}[Recasting moments I]
\label{momentchain}
Recall the setting of \cite{DS} given above.  Then $\widetilde K_n = K_n + O(n^{-2})$ and the measure $\nu$ with respect to ${\bf T}$ is also the occupation law with respect to ${\bf Z}$,
\begin{align}
\nu & \stackrel d=\lim_{n\rightarrow\infty}\frac1n\left\langle\sum_{j=1}^n \delta_{Z_j}(i)\right\rangle_{i\in\mathcal{X}}.
\label{newocc}
\end{align}
Moreover, the moments may be expressed in terms of ${\bf Z}$,
\begin{align}
\label{newmoments}
\E\left[\prod_{i=1}^k\nu_i^{m_i}\right] & ={N\choose m_1,...,m_k}^{-1} \P\left(\sum_{j=1}^N1_i(Z_j)=m_i:1\leq i\leq k\right),
\end{align}
and, in particular, $\E\left[\nu_i^N\right]=P(Z_1=...=Z_N=i)$.
\end{theorem}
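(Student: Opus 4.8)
The plan is to establish the three assertions in sequence, each reducing to the previous one. First I would verify the estimate $\widetilde K_n = K_n + O(n^{-2})$ by expanding the resolvent: since $\widetilde K_n = (I - G/n)^{-1} = \sum_{m\ge 0} (G/n)^m = I + G/n + O(n^{-2})$, the claim is immediate from the boundedness condition $\sup_i|G_{i,i}|<\infty$ in Definition \ref{generator_def}, which bounds all entries of $G$ and hence all entries of $G^2$ (on finite $\X$ this is automatic, but the estimate is entrywise uniform). This step is routine; the only care needed is that $K_n = I + (G/n)\mathbbm{1}(n>M)$ agrees with $I + G/n$ for $n > M$, so the $O(n^{-2})$ discrepancy holds for all large $n$.

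Second, for \eqref{newocc}, I would invoke the moment characterization of $\nu$ from \cite{DS} recalled in \eqref{DS_moments}: the limiting occupation law of an inhomogeneous chain with kernels $\{J_n\}$ is determined by, and depends only on, the family of products $\prod_j (J_j)_{\sigma_j,\sigma_{j+1}}$ appearing there, together with the initial distribution. Since $\nu$ is a bounded random vector on the simplex, its law is determined by its moments. The point is that formula \eqref{DS_moments}, when one traces through the proof in \cite{DS}, actually computes $\E\prod_j \nu_j^{m_j}$ as $\binom{N}{m_1,\dots,m_k}^{-1}$ times a sum over permutations $\sigma\in\mathbb S(\mathbf m)$ of $\mu_{\sigma_1}\prod_{j=1}^{N-1}(\widetilde K_j)_{\sigma_j,\sigma_{j+1}}$ — that is, the chain whose transition kernel at step $j$ is exactly $\widetilde K_j = (I-G/j)^{-1}$. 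So I would argue that running the inhomogeneous chain ${\bf Z}$ with kernels $\{\widetilde K_n\}$ from initial law $\mu$ produces an occupation law with the same moments as $\nu$, hence the same distribution; alternatively, one shows directly that replacing $K_n$ by $\widetilde K_n = K_n + O(n^{-2})$ cannot change the weak limit of the occupation measure, since the cumulative perturbation $\sum_n O(n^{-2})$ is summable and a coupling argument (or a second-moment comparison) shows the two empirical measures have the same limit — but the cleaner route is the moment identification, because \eqref{DS_moments} is literally the moment formula for ${\bf Z}$'s occupation law.

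Third, \eqref{newmoments} is then just the combinatorial reinterpretation of the right side of \eqref{DS_moments} once $\widetilde K_j$ is recognized as the step-$j$ transition kernel of ${\bf Z}$: the sum $\sum_{\sigma\in\mathbb S(\mathbf m)}\mu_{\sigma_1}\prod_{j=1}^{N-1}(\widetilde K_j)_{\sigma_j,\sigma_{j+1}}$ is, term by term, the probability that $(Z_1,\dots,Z_N)$ equals a given arrangement $\sigma$ of the multiset consisting of $m_i$ copies of $i$; summing over all $\binom{N}{m_1,\dots,m_k}$ such arrangements gives $\P(\sum_{j=1}^N \mathbbm 1_i(Z_j) = m_i,\ 1\le i\le k)$, which is \eqref{newmoments}. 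The special case $\mathbf m = N e_i$ leaves a single permutation, giving $\E[\nu_i^N] = \P(Z_1=\cdots=Z_N=i)$.

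The main obstacle I anticipate is the second step: making rigorous that the limiting occupation law is unchanged when $K_n$ is replaced by the resolvent $\widetilde K_n$. Quoting \eqref{DS_moments} as a formula that already has the resolvents in it sidesteps this, but one should check that the hypotheses under which \cite{DS} proves \eqref{DS_moments} match the present setting (finite $\X$, $G$ with no zero entries, kernels $I + \frac Gn\mathbbm 1(n>M)$) and that the moment problem is determinate — which it is, since $\nu$ is supported on the compact simplex. If instead one wants a self-contained perturbation argument, the work is in controlling the propagation of the $O(n^{-2})$ errors through the product $\prod_{j=1}^{n}(\cdot)_{\sigma_j,\sigma_{j+1}}$ uniformly in $n$, which is where I would spend the most effort.
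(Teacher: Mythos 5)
Your verification of $\widetilde K_n = K_n + O(n^{-2})$ and your reading of \eqref{newmoments} as a combinatorial restatement of \eqref{DS_moments} both match the paper and are correct: the right-hand side of \eqref{DS_moments} is exactly the normalized sum of path probabilities $\mu_{\sigma_1}\prod_{j}(\widetilde K_j)_{\sigma_j,\sigma_{j+1}}$ for the chain ${\bf Z}$, and the special case $\mathbf m = Ne_i$ leaves a single path.

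The gap is in \eqref{newocc}. Your preferred ``cleaner route'' is circular. You assert that ``\eqref{DS_moments} is literally the moment formula for ${\bf Z}$'s occupation law,'' but \eqref{DS_moments} is the moment formula for $\nu$, the weak limit of the occupations of ${\bf T}$, whose kernels are $K_n = I+G/n\,\mathbbm 1(n>M)$. The resolvents $\widetilde K_j = (I-G/j)^{-1}$ appear there as algebraic objects in the answer; nothing in \cite{DS} asserts that the occupations of the chain ${\bf Z}$ driven by $\{\widetilde K_n\}$ converge, let alone to a limit with those moments. Indeed $\widetilde K_n$ is not of the form $I + \tilde G/n$ for a generator $\tilde G$, so the hypotheses of \cite{DS} do not apply to ${\bf Z}$. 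Moment determinacy on the compact simplex only lets you identify two limit laws once you already know both sets of moments, and the moments of ${\bf Z}$'s limiting occupation are precisely what has to be established. So as written you have proved \eqref{newmoments} but not \eqref{newocc}.

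You do name the right fix --- a coupling argument using the summability of $\sum_n O(n^{-2})$ --- but you set it aside as harder and do not carry it out. The paper executes it directly via a Borel--Cantelli coupling that avoids the uniform error-propagation bound you anticipated. Set $A_j := \widetilde K_j - K_j$, which is $O(j^{-2})$ with zero row sums. Since $G$ has no zero entries, $K_j$ has strictly positive entries for $j>M$, so one may choose $a>0$ making $R_j := K_j + (j^2/a)A_j$ stochastic, and then $\widetilde K_j = (1-a/j^2)K_j + (a/j^2)R_j$. A chain ${\bf Z}'$ that at step $j$ independently applies $R_j$ with probability $a/j^2$ and $K_j$ otherwise has the same law as ${\bf Z}$; Borel--Cantelli implies the auxiliary marks fire only finitely often, so past a random finite time ${\bf Z}'$ evolves with kernels $\{K_j\}$. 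Irreducibility of $G$ makes the occupation limit of ${\bf T}$ insensitive to the initial distribution and initial segment, so ${\bf Z}$ and ${\bf T}$ share the limit $\nu$, proving \eqref{newocc}. After that \eqref{newmoments} follows as you described.
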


Alternatively, we now recast the moment result \eqref{DS_moments} in an algebraic form where it can be more easily exploited.   Let $p_{min}(\lambda)$ be the minimal polynomial of $G$ and $q(\lambda)=\sum_{k=0}^na_k\lambda^k$ be the polynomial such that $p_{min}(\lambda)=\lambda q(\lambda)$. Define, for $j\geq 0$,
\begin{equation}
\label{q equation}
p_j(\lambda):= \frac{p_{min}(\lambda)-p_{min}(j)}{\lambda-j} =\sum_{k=0}^n\lambda^k\sum_{l=k}^n a_l j^{l-k}.
\end{equation}

\begin{theorem}[Recasting moments II]
\label{moments_thm}
We have
$p_0(G)/q(0)$ is the matrix with constant rows $\mu$, and $p_j(G)/q(j) = \widetilde K_j$ for $j\geq 1$.  As a consequence,
 for ${\bf m}\in\mathbb{N}_0^k$ with $\sum _{i=1}^km_i=N>0$ and fixed constant $\sigma_0=1$,
\begin{equation}
\label{alg_mom}
\E\left[\prod_{i=1}^k\nu_i^{m_i}\right]={N\choose m_1,...,m_k}^{-1}\sum_{\sigma\in\mathbb{S}({\bf m})}\prod_{j=0}^{N-1}\left[\frac{p_j(G)}{q(j)}\right]_{\sigma_j\sigma_{j+1}}.
\end{equation}
\end{theorem}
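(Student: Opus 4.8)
The plan is to identify the rational-matrix expressions $p_j(G)/q(j)$ with the operators already appearing in the moment formula \eqref{DS_moments}, and then simply substitute. The starting point is the observation that $p_{min}(\lambda) = \lambda q(\lambda)$ means $0$ is a simple root of the minimal polynomial of $G$ (which holds precisely because $G$ has a one-dimensional kernel, spanned by the all-ones vector on the right and by $\mu$ on the left, when $G$ has no zero rows on a finite irreducible state space). First I would treat the case $j=0$: since $p_0(\lambda) = (p_{min}(\lambda)-p_{min}(0))/(\lambda - 0) = p_{min}(\lambda)/\lambda = q(\lambda)$, we get $p_0(G)/q(0) = q(G)/q(0)$, and because $q(G)$ annihilates every generalized eigenspace of $G$ for nonzero eigenvalues while acting as the scalar $q(0)\neq 0$ on $\ker G$, the matrix $q(G)/q(0)$ is exactly the spectral projection onto $\ker G$ along the complementary invariant subspace. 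On a finite irreducible generator this projection is the rank-one matrix $\mathbf{1}\mu^t$, i.e. the matrix with constant rows $\mu$; I would spell this out using the Jordan/spectral decomposition of $G$.

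For $j\geq 1$, the key identity is that $\widetilde K_j = (I - G/j)^{-1}$ can be written as a polynomial in $G$: indeed $(I-G/j)^{-1} = j(jI - G)^{-1}$, and since $p_{min}(G)=0$ one has $p_{min}(\lambda) - p_{min}(j) = (\lambda - j)p_j(\lambda)$ evaluated at $\lambda = G$, giving $-p_{min}(j) I = (G - jI)p_j(G)$, i.e. $(jI - G)p_j(G) = p_{min}(j) I = j\,q(j) I$. Hence $(jI-G)^{-1} = p_j(G)/(j\,q(j))$ provided $p_{min}(j)\neq 0$, which holds since $j>0$ and all nonzero eigenvalues of the generator $G$ have strictly negative real part, so $j$ is never a root of $p_{min}$. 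Multiplying by $j$ yields $\widetilde K_j = (I-G/j)^{-1} = p_j(G)/q(j)$, as claimed. I would also note the already-established fact from Theorem \ref{momentchain} (and the resolvent representation preceding it) that $\widetilde K_j$ is a genuine stochastic kernel, which makes the substitution into a product of kernel entries meaningful.

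With both identifications in hand, the formula \eqref{alg_mom} follows by substituting into \eqref{DS_moments}: the factor $\mu_{\sigma_1}$ becomes $[p_0(G)/q(0)]_{\sigma_0,\sigma_1}$ for the fixed dummy index $\sigma_0=1$ (using that the $\sigma_0$-th row of $\mathbf 1\mu^t$ is $\mu^t$ regardless of which index $\sigma_0$ is), and each factor $[(I-G/j)^{-1}]_{\sigma_j,\sigma_{j+1}}$ becomes $[p_j(G)/q(j)]_{\sigma_j,\sigma_{j+1}}$ for $1\leq j\leq N-1$; reindexing the product to run from $j=0$ to $N-1$ gives exactly the right-hand side of \eqref{alg_mom}. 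I expect the main obstacle to be purely bookkeeping rather than conceptual: one must be careful that $p_{min}(\lambda)=\lambda q(\lambda)$ genuinely holds (the simplicity of the eigenvalue $0$), that $q(0)\neq 0$ and $p_{min}(j)\neq 0$ for $j\geq 1$ so that the divisions are legitimate, and that the spectral-projection argument correctly yields constant rows $\mu$ (not $\mu^t$ columns) — essentially checking that the normalization constants match up. None of these steps is deep, but getting the left/right eigenvector conventions consistent with the row-vector convention $v^t$ used in the paper is where an error would most plausibly creep in.
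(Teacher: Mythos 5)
Your proposal is correct and follows essentially the same route as the paper: identify $p_j(G)/q(j)$ with $\widetilde K_j$ for $j\geq 1$ via the polynomial identity $(jI-G)p_j(G)=jq(j)I$, identify $p_0(G)/q(0)$ with the rank-one matrix $\mathbf 1\mu^t$, and substitute into \eqref{DS_moments}. The only noteworthy difference is in how you handle the $j=0$ case: you invoke the spectral projection onto $\ker G$ along $\mathrm{range}(G)$ (using Jordan structure and semisimplicity of the eigenvalue $0$), whereas the paper argues more elementarily that each row of $p_0(G)$ is a left-null vector of $G$ (hence proportional to $\mu^t$ by irreducibility) and then pins down the normalization by observing that every $p_j(G)/q(j)$ has constant row sums equal to $1$ because $G^k$ has zero row sums for $k\geq 1$; your spectral argument is cleaner conceptually, the paper's avoids any explicit spectral decomposition. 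One small imprecision to fix in your write-up: a one-dimensional right kernel alone does not force $0$ to be a simple root of the minimal polynomial (a $2\times2$ nilpotent block is a counterexample); what you actually need — and what the paper invokes — is that the algebraic multiplicity of $0$ is $1$, which holds by Perron--Frobenius for the irreducible finite-state generator $G$.
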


One can now recover the moments of the marginals.

\begin{cor}[Marginals] \label{marginalmom}
Let $\{\lambda_l\}_{l=1}^n$ be the non-zero roots of $q$, all of which are non-zero eigenvalues of $G$.  Let also $\{\gamma_{i,l}\}_{l=1}^n$ be the zeros of $[p_j(G)]_{ii}$ considered as a function of $j$.  Then,
\begin{align}
\E[\nu_i^N]=\prod_{l=1}^n\frac{\left(-\gamma_{i,l}\right)_N}{\left(-\lambda_l\right)_N}\label{margform}
\end{align}
\end{cor}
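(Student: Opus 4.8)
The plan is to read off $\E[\nu_i^N]$ from the algebraic moment formula \eqref{alg_mom} by specializing to ${\bf m}=N\mathbf e_i$, and then to recognize the two resulting products over $j$ as products of Pochhammer symbols. First I would observe that when ${\bf m}=N\mathbf e_i$ the index set $\mathbb S({\bf m})$ is a singleton --- the constant list $\sigma_1=\cdots=\sigma_N=i$ --- and the multinomial coefficient $\binom{N}{0,\dots,N,\dots,0}$ equals $1$, so \eqref{alg_mom} collapses to
\begin{equation*}
\E[\nu_i^N]=\prod_{j=0}^{N-1}\left[\frac{p_j(G)}{q(j)}\right]_{\sigma_j\sigma_{j+1}},\qquad \sigma_0=1,\ \sigma_j=i\ (j\ge1).
\end{equation*}
The $j=0$ factor is $\big[p_0(G)/q(0)\big]_{1,i}$, which equals $\mu_i$ because $p_0(G)/q(0)$ has constant rows $\mu$ (the first assertion of Theorem \ref{moments_thm}); since its $i$-th row is also $\mu$, that factor equals $\big[p_0(G)/q(0)\big]_{i,i}=[p_0(G)]_{ii}/q(0)$ as well. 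Thus every factor is an $(i,i)$-entry, and
\begin{equation}
\label{marginalratio}
\E[\nu_i^N]=\frac{\prod_{j=0}^{N-1}[p_j(G)]_{ii}}{\prod_{j=0}^{N-1}q(j)}.
\end{equation}
(Equivalently, the last identity of Theorem \ref{momentchain}, $\E[\nu_i^N]=\P(Z_1=\cdots=Z_N=i)=\mu_i\prod_{j=1}^{N-1}[\widetilde K_j]_{ii}$ together with $\widetilde K_j=p_j(G)/q(j)$, gives \eqref{marginalratio} directly.)

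Next I would factor the two polynomials in $j$ appearing in \eqref{marginalratio}. Taking $p_{min}$ monic, $q(\lambda)=\sum_{k=0}^n a_k\lambda^k$ has leading coefficient $a_n=1$; under the standing hypothesis of \cite{DS} that $G$ has no zero entries the generator $G$ is irreducible, hence $0$ is a simple eigenvalue, so $q(0)=a_0\neq0$, all $n$ roots $\lambda_1,\dots,\lambda_n$ of $q$ are non-zero, and $q(\lambda)=\prod_{l=1}^n(\lambda-\lambda_l)$. For the numerator, \eqref{q equation} gives $p_j(G)=\sum_{k=0}^n G^k\sum_{l=k}^n a_l j^{l-k}$, whose coefficient of $j^n$ is $a_nG^0=I$; therefore $[p_j(G)]_{ii}$ is, as a function of $j$, a monic polynomial of degree exactly $n$, and it factors as $\prod_{l=1}^n(j-\gamma_{i,l})$ with $\{\gamma_{i,l}\}$ its zeros. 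I would also record the non-degeneracy facts that keep every quantity finite and non-zero: eigenvalues of a generator matrix have non-positive real part, so $q(j)\neq0$ for every $j\ge1$; $\mu_i>0$ by irreducibility, so $[p_0(G)]_{ii}=q(0)\mu_i\neq0$ and hence no $\gamma_{i,l}$ vanishes; and although the $\lambda_l$ and $\gamma_{i,l}$ need not be real, they occur in complex-conjugate pairs, so all the products below are real.

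Finally I would apply the elementary identity $\prod_{j=0}^{N-1}(j-c)=(-1)^N c(c-1)\cdots(c-N+1)=(-c)_N$, valid for any $c\in\mathbb C$, with $c=\lambda_l$ and with $c=\gamma_{i,l}$. Interchanging the finite products yields $\prod_{j=0}^{N-1}q(j)=\prod_{l=1}^n(-\lambda_l)_N$ and $\prod_{j=0}^{N-1}[p_j(G)]_{ii}=\prod_{l=1}^n(-\gamma_{i,l})_N$, and substituting into \eqref{marginalratio} gives \eqref{margform}. There is no conceptual obstacle here; the step I would be most careful about is the degree and leading-coefficient bookkeeping for $q(\lambda)$ and for $[p_j(G)]_{ii}$ as a polynomial in $j$ --- ensuring both are monic of the common degree $n$ so that the factor-by-factor product over $j=0,\dots,N-1$ collapses exactly to the stated ratio of Pochhammer products --- together with checking that no denominator, and no factor $(-\lambda_l)_N$, vanishes.
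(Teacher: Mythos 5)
Your proposal is correct and follows essentially the same route as the paper: specialize the moment formula \eqref{alg_mom} to $\mathbf m=N\mathbf e_i$, observe the singleton index set, factor $q(j)$ and $[p_j(G)]_{ii}$ as monic (up to the common leading coefficient $a_n$, which cancels) degree-$n$ polynomials in $j$, and interchange the finite products to recognize Pochhammer symbols. Your extra care about the $\sigma_0=1$ boundary factor, which you resolve via the constant-row structure of $p_0(G)/q(0)$, is a small improvement on the paper's terser statement but does not change the argument.
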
 
 
Interestingly, when $\Lambda=\{\lambda_l\}_{l=1}^n$ and $\Gamma_i=\{\gamma_{i,l}\}_{l=1}^n$ are real and pairwise ordered $\lambda_j<\gamma_{i,j}<0$, we recognize these marginal moments as the product of the $N$th order moments of independent Beta$(-\gamma_{i,l},\gamma_{i,l}-\lambda_l)$ variables for $1\leq l\leq n$. 

In the Dirichlet case, when $G$ is of the form $\theta(Q-I)$ where $\theta>0$ and $Q$ is stochastic with constant rows $\mu$, we have $q(j)=j+\theta$ and $p_j(\lambda)=\lambda+j+\theta$. This corresponds to $[p_j(G)]_{ii}=j+\theta\mu_i$ and $\E[\nu_i^N]=\frac{(\theta\mu_i)_N}{(\theta)_N}$, the $N$th order moments of a Beta$(\theta\mu_i,\theta(1-\mu_i))$ variable or equivalently the $i$th marginal of a Dirichlet variable with parameters $\theta\mu$.

However, in general, $\Lambda$ and $\Gamma_i$ need not be sets of real numbers, and \eqref{margform} gives the moments of beta products in a sense with complex parameters.

The marginal density function $f_i$ of $\nu_i$ can be written in terms of Meijer G-functions, typically denoted $G^{M,N}_{P,Q}\left(\left.\begin{array}{c}\vec{a}\\ \vec{b}\end{array}\right|z\right)$ where $M\leq Q$ and $N\leq P$ are non-negative integers, $\vec{a}\in\mathbb{C}^P$, and $\vec{b}\in\mathbb{C}^Q$. Given $\Lambda$ and $\Gamma_i$, $f_i$ is given by
\begin{align*}
f_i(x) & =\left[\prod_{l=1}^n\frac{\Gamma(-\lambda_l)}{\Gamma(-\gamma_{i,l})}\right]x^{-1}G^{n,0}_{n,n}\left(\left.\begin{array}{c}-\Lambda\\-\Gamma_i\end{array}\right|x\right)\mathbbm{1}(0<x<1).
\end{align*}

The class of Meijer-G functions includes generalized hypergeometric functions, among others. For a thorough review of Meijer G-functions, their specification, and connection to Beta products via the Mellin transform, see \cite{MathaiSaxena}. See also \cite{Dufresne} for a discussion of the distributional properties of the product of two Beta variables with complex parameters with an application to risk theory.

\section{Proofs}
\label{proofs_sect}

We first note a standard algebraic identity which leads to useful formulas for RAMs.  Recall our conventions specified at the beginning of section \ref{results_sect}.

\begin{lemma}\label{oneidentity} For any sequence of numbers $a_j$ and integer $k\geq 1$, we have
\begin{equation}
\label{one_eqn}
\prod_{j=1}^k(1-a_j)+\sum_{j=1}^k a_j\prod_{i=1}^{j-1}(1-a_i)=1.
\end{equation}
\end{lemma}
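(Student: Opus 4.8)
The statement to prove is the algebraic identity
$$\prod_{j=1}^k(1-a_j)+\sum_{j=1}^k a_j\prod_{i=1}^{j-1}(1-a_i)=1$$
for any sequence of numbers $a_j$ and any integer $k\ge 1$.

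The plan is to prove this by induction on $k$. For the base case $k=1$, the left side is $(1-a_1)+a_1\prod_{i=1}^{0}(1-a_i)=(1-a_1)+a_1\cdot 1=1$, using the convention that the empty product equals $1$. For the inductive step, suppose the identity holds for some $k\ge 1$; I want to establish it for $k+1$. The key observation is that the $(k+1)$-term version differs from the $k$-term version in two places: the leading product gains a factor $(1-a_{k+1})$, and the sum gains one extra term $a_{k+1}\prod_{i=1}^{k}(1-a_i)$. So I would write
$$\prod_{j=1}^{k+1}(1-a_j)+\sum_{j=1}^{k+1}a_j\prod_{i=1}^{j-1}(1-a_i)
=(1-a_{k+1})\prod_{j=1}^{k}(1-a_j)+\sum_{j=1}^{k}a_j\prod_{i=1}^{j-1}(1-a_i)+a_{k+1}\prod_{i=1}^{k}(1-a_i).$$
The first and last terms on the right combine: $(1-a_{k+1})\prod_{j=1}^k(1-a_j)+a_{k+1}\prod_{i=1}^k(1-a_i)=\prod_{j=1}^k(1-a_j)$, since the $a_{k+1}$ contributions cancel. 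This leaves exactly $\prod_{j=1}^k(1-a_j)+\sum_{j=1}^k a_j\prod_{i=1}^{j-1}(1-a_i)$, which equals $1$ by the inductive hypothesis, completing the step.

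Alternatively, and perhaps more transparently, one can give a direct telescoping argument without induction: set $Q_j=\prod_{i=1}^{j}(1-a_i)$ with $Q_0=1$, and note that each summand satisfies $a_j\prod_{i=1}^{j-1}(1-a_i)=Q_{j-1}-Q_j$, since $Q_{j-1}-Q_j=Q_{j-1}\bigl(1-(1-a_j)\bigr)=a_j Q_{j-1}$. Then $\sum_{j=1}^k a_j\prod_{i=1}^{j-1}(1-a_i)=\sum_{j=1}^k(Q_{j-1}-Q_j)=Q_0-Q_k=1-\prod_{j=1}^k(1-a_j)$, and rearranging gives the claim.

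There is essentially no obstacle here — the identity is elementary. The only point requiring a little care is the bookkeeping of empty products, i.e. ensuring that the $j=1$ term $a_1\prod_{i=1}^0(1-a_i)$ is read as $a_1$ via the convention $\prod_{\emptyset}=1$ stated at the start of Section~\ref{results_sect}; this is what makes the base case and the telescoping both go through cleanly. I would present the telescoping proof as the main argument since it is the shortest and most self-contained.
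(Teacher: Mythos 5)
Both of your arguments are correct. The induction is essentially the paper's proof, differing only in that the paper indexes the step as $k-1 \to k$ rather than $k \to k+1$ and combines the reindexing and cancellation into a single displayed line. The telescoping argument is a genuine, and arguably cleaner, alternative: by writing each summand as $Q_{j-1}-Q_j$ with $Q_j=\prod_{i=1}^{j}(1-a_i)$ you obtain the identity in one stroke, and in fact you directly derive the rearranged form $\sum_{j=1}^k a_j\prod_{i=1}^{j-1}(1-a_i)=1-\prod_{j=1}^k(1-a_j)$, which is exactly the statement \eqref{P_eqn} that the paper extracts in Proposition \ref{oneidentity_prop}. What the telescoping buys is transparency and the elimination of the inductive scaffolding; what the induction buys is parallelism with the later inductive argument in the proof of Proposition \ref{oneidentity_prop}, which reuses the same mechanism. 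Either presentation is sound, and your care with the empty-product convention at $j=1$ is exactly the right thing to flag.
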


\noindent {\it Proof.}
We proceed by an induction. Equation \eqref{one_eqn} is trivially true for $k=1$: $(1-a_1)+a_1=1$. If it is true for $k-1$, then
the left-hand side of \eqref{one_eqn} equals
\begin{align*}
& \prod_{j=1}^{k-1}(1-a_j)-a_k\prod_{j=1}^{k-1}(1-a_j)+\sum_{j=1}^{k-1}a_j\prod_{i=1}^{j-1}(1-a_i)+a_k\prod_{j=1}^{k-1}(1-a_j)\\
= & \prod_{j=1}^{k-1}(1-a_j)+\sum_{j=1}^{k-1}a_j\prod_{i=1}^{j-1}(1-a_i)=1. \quad \quad \quad \quad \quad \quad \quad \quad \quad \quad \quad \quad \quad \quad \quad \quad \quad \quad \square
\end{align*}

\begin{prop}
\label{oneidentity_prop} 
Consider a distribution ${\bf P}=\langle P_j: j\geq1 \rangle$ on $\N$ and factors ${\bf X}=\{X_j\}_{j\geq 1}$ with 
$$X_j = \left\{\begin{array}{cl}
P_j\left(1-\sum_{i=1}^{j-1} P_i\right)^{-1}& \ {\rm  when \ }\sum_{i=1}^{j-1} P_i<1\\
1 & \ {\rm  otherwise.}
\end{array}\right.
$$  Then,  
$P_j = X_j\prod_{i=1}^{j-1}(1-X_i)$ for $j\geq 1$.

  In particular, if ${\bf P}$ is a RAM constructed from ${\bf X}=\{X_j\}_{j\geq 1}$, for $1\leq k\leq r$, we have
\begin{equation}
\label{P_eqn}
\sum_{j=1}^r P_j = 1- \prod_{j=1}^r (1-X_j), \ \ {\rm and \ \ } \sum_{j=k}^r P_j =\prod_{j=1}^{k-1}(1-X_j)\left[ 1-\prod_{j=k}^r(1-X_j)\right].
\end{equation}
\end{prop}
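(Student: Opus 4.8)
The plan is to prove Proposition \ref{oneidentity_prop} in two stages: first the pointwise identity $P_j = X_j\prod_{i=1}^{j-1}(1-X_i)$ for an arbitrary distribution $\mathbf P$ with the stated choice of fractions, and then the two telescoping formulas in \eqref{P_eqn} under the additional RAM hypothesis, where $\mathbf P$ is \emph{constructed} from $\mathbf X$ via \eqref{RAM_eq1}.

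\medskip

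\textbf{Stage 1 (the defining identity).} I would argue by induction on $j$. For $j=1$ the claim is $P_1 = X_1$, which holds since $\sum_{i=1}^{0}P_i = 0 < 1$ forces $X_1 = P_1(1-0)^{-1} = P_1$. For the inductive step, suppose $P_i = X_i\prod_{l=1}^{i-1}(1-X_l)$ for all $i<j$. Summing and applying Lemma \ref{oneidentity} to the numbers $a_l = X_l$ (with $k=j-1$) gives $\sum_{i=1}^{j-1}P_i = \sum_{i=1}^{j-1} X_i\prod_{l=1}^{i-1}(1-X_l) = 1 - \prod_{l=1}^{j-1}(1-X_l)$, so that $1 - \sum_{i=1}^{j-1}P_i = \prod_{l=1}^{j-1}(1-X_l)$. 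Now split into cases. If $\sum_{i=1}^{j-1}P_i < 1$, the product $\prod_{l=1}^{j-1}(1-X_l)$ is strictly positive, so each factor $1-X_l>0$, and by definition $X_j = P_j\big(1-\sum_{i=1}^{j-1}P_i\big)^{-1} = P_j\prod_{l=1}^{j-1}(1-X_l)^{-1}$, which rearranges to the desired $P_j = X_j\prod_{l=1}^{j-1}(1-X_l)$. If instead $\sum_{i=1}^{j-1}P_i = 1$, then $\prod_{l=1}^{j-1}(1-X_l)=0$, so the right-hand side $X_j\prod_{l=1}^{j-1}(1-X_l)=0$; and since $\mathbf P$ is a probability distribution with $\sum_{i=1}^{j-1}P_i = 1$ we must have $P_j = 0$, matching. (Here I use the conventions $1/0=\infty$, $0/0=0$ only implicitly; the case analysis avoids needing them.) This completes the induction.

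\medskip

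\textbf{Stage 2 (the telescoping sums).} Now assume $\mathbf P$ is a RAM built from $\mathbf X = \{X_j\}_{j\ge 1}$. By \eqref{RAM_eq1} together with the remark following it (which is exactly Stage 1 applied in the forward direction), $P_j = X_j\prod_{i=1}^{j-1}(1-X_i)$. For the first formula in \eqref{P_eqn}, apply Lemma \ref{oneidentity} with $a_j = X_j$ and the given $r$: $\prod_{j=1}^r(1-X_j) + \sum_{j=1}^r X_j\prod_{i=1}^{j-1}(1-X_i) = 1$, i.e. $\sum_{j=1}^r P_j = 1 - \prod_{j=1}^r(1-X_j)$. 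For the second, write $\sum_{j=k}^r P_j = \sum_{j=1}^r P_j - \sum_{j=1}^{k-1}P_j = \big(1 - \prod_{j=1}^r(1-X_j)\big) - \big(1 - \prod_{j=1}^{k-1}(1-X_j)\big) = \prod_{j=1}^{k-1}(1-X_j) - \prod_{j=1}^r(1-X_j)$, and factor out $\prod_{j=1}^{k-1}(1-X_j)$ to obtain $\prod_{j=1}^{k-1}(1-X_j)\big[1 - \prod_{j=k}^r(1-X_j)\big]$, as claimed.

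\medskip

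I do not anticipate a genuine obstacle here: the whole proposition is a bookkeeping consequence of Lemma \ref{oneidentity}. The only point requiring a little care is Stage 1 when $1-\sum_{i=1}^{j-1}P_i = 0$, where one must check consistency of the two branches of the definition of $X_j$ with the convention-laden arithmetic rather than blindly dividing; handling it via the probability-measure hypothesis ($P_j=0$ once the partial sums reach $1$) sidesteps any ambiguity.
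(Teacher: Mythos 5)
Your proof is correct and follows essentially the same route as the paper: an induction driven by Lemma \ref{oneidentity} for the identity $P_j = X_j\prod_{i<j}(1-X_i)$, then the two telescoping formulas as immediate consequences. The only difference is that you make explicit the edge case $\sum_{i<j}P_i = 1$, which the paper's proof passes over silently; this is a harmless and slightly more careful rendering of the same argument.
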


\begin{proof}
Part (I) follows from \eqref{one_eqn} by an induction:  Trivially, $P_1 = X_1$.  Suppose $P_{k} = X_{k}\prod_{i=1}^{k-1}(1-X_i)$ for $k\leq j$ and so, by \eqref{one_eqn}, we have $\prod_{k=1}^j(1-X_k) = 1- \sum_{k=1}^jP_k$.  Then, $P_{j+1} = X_{j+1}\left(1-\sum_{k=1}^{j}P_k\right) = X_{j+1}\prod_{k=1}^j (1-X_k)$.

For Part (II), the lines in \eqref{P_eqn} follow from Part (I) and \eqref{one_eqn}.
\end{proof}

\subsection{Proof of Theorem \ref{RAM}: Clumped RAMs}

Let ${\bf P}$ be a RAM, and let ${\bf X}=\{X_j\}_{j\geq 1}$ be the independent proportions from which ${\bf P}$ is constructed.  From Proposition \ref{oneidentity_prop}, for $j\geq 1$, we have $P_j=X_j\prod_{i=1}^{j-1}(1-X_i)$.

Let ${\bf u}=\{u_j\}_{j\geq 1}$ be an increasing sequence in $\N\cup\{\infty\}$ with $u_1=1$ and $\lim_{j\rightarrow\infty}u_j=\infty$.
Define new proportions ${\bf X^u} = \{X^u_j\}_{j\geq 1}$ from ${\bf X}$, using Proposition \ref{oneidentity_prop} again:  For $j\geq 1$,
\begin{align}\label{Ram_2}
X^u_j  =\left\{\begin{array}{cl}
\sum_{i=u_j}^{u_{j+1}-1}X_i\prod_{l=u_j}^{i-1}(1-X_l) & \\
\ \ \ \ \ \ \ =1-\prod_{i=u_j}^{u_{j+1}-1}(1-X_i) & {\rm if \ } u_j<\infty
\\
1 & {\rm if \ } u_j=\infty.\end{array}\right.
\end{align}
Recall, for $j\geq 1$, that $P^u_j = \sum_{i=u_j}^{u_{j+1}-1} P_i$ when $u_j<\infty$ and $P^u_j = 0$ otherwise, and ${\bf P^u}= \{P^u_j\}_{j\geq 1}$.

We now proceed to the proofs of Parts (1)-(4).

\subsubsection{Proof of Part (1)}  We now verify that ${\bf P^u}$ is a RAM with respect to fractions ${\bf X^u}$: Let $B=\sup\{j:u_j<\infty\}$.  For $1\leq j\leq B$, noting 
\eqref{Ram_2}, write
\begin{eqnarray*}
P_j^u  &=&\sum_{i=u_j}^{u_{j+1}-1}P_i \ = \ 
\sum_{i=u_j}^{u_{j+1}-1}X_i\prod_{l=1}^{i-1}(1-X_l)\\
&=& \left[\sum_{i=u_j}^{u_{j+1}-1}X_i\prod_{l=u_j}^{i-1}(1-X_l)\right] \prod_{l=1}^{u_j-1}(1-X_l)\ = \ 
 X_j^u\prod_{l=1}^{u_j-1}(1-X_l)\\
&=&  X^u_j \left[\prod_{l=u_1}^{u_2-1}(1-X_l)\right]\cdots \left[\prod_{l=u_{j-1}}^{u_j-1}(1-X_l)\right]\ = \ 
 X_j^u\prod_{i=1}^{j-1}(1-X^u_i).
\end{eqnarray*}

For $j>B$, note $P_j^u=0$ and
$\prod_{i=1}^B(1-X_i^u)=1-\sum_{i=1}^B P_i^u=1-\sum_{i=1}^\infty P_i=0$.
Then, $X_j^u\prod_{i=1}^{j-1}(1-X_i^u)=0=P_j^u$.

Since ${\bf X}=\{X_j\}_{j\geq 1}$ is composed of independent variables, so is ${\bf X^u}=\{X^u_j\}_{j\geq 1}$.  Hence, as $\sum_{j\geq 1}P^u_j=\sum_{j\geq 1} P_j\stackrel{a.s.}{=} 1$, by definition, ${\bf P^u}$ is a RAM constructed from independent proportions ${\bf X^u}$. \qed

\subsubsection{Proof of Part (2)}

Let ${\bf y}=\{y_i\}_{i\geq 1}$ be a possible sequence for ${\bf Y}$ in $\mathcal{X}$.  Define $J=\inf\{j\geq 1: y_j=y_{j+1}\}$.  Then, ${\bf y}$ is then either non-repeating and $J=\infty$, or ${\bf y}$ is non-repeating until reaching a finite time $J$, after which the sequence is constant.

For $1\leq n<J$, the event that $Y_i=y_i$ for $1\leq i\leq n$ means the chain ${\bf T}$ starts in $y_1$, staying there until time $V_2$, when it switches to $y_2$, remaining there until time $V_3$, and so on up to time $V_n$ when it moves into $y_n$.
Write for $n<J$ that
\begin{eqnarray}
 && \P\left(Y_i=y_i: 1\leq i\leq n\right)\nonumber\\
&&\ =  \ \sum_{l_1=1}^\infty \cdots \sum_{l_{n-1}=1}^\infty \P\left ( Y_i=y_i: 1\leq i\leq n, \ \ {\rm and \ \ } V_{i+1}-V_i=l_i: 1\leq i\leq n-1\right)\nonumber\\
&&\ = \  \sum_{l_1 = 1}^\infty \cdots \sum_{l_{n-1}=1}^\infty \P(T_1=y_1)\prod_{i=1}^{n-1}Q_{y_iy_i}^{l_i-1} Q_{y_iy_{i+1}}\nonumber \\
&&\ = \  \P(T_1=y_1)\prod_{i=1}^{n-1}\frac{Q_{y_iy_{i+1}}}{1-Q_{y_iy_i}}\ = \ 
    \P(T_1=y_1)\prod_{i=1}^{n-1}K(y_i,y_{i+1}).
\label{2_seq}
\end{eqnarray}

Suppose $J<\infty$.  Then, $y_J$ is an absorbing state of ${\bf T}$ and, for $i\geq J$, we have $Q_{y_i,y_i}=K(y_i,y_i)=K(y_i,y_{i+1})=1$. Define $l_J=\infty$ and write for $n\geq J$ that
\begin{eqnarray*}
 && \P\left(Y_i=y_i: 1\leq i\leq n\right)\\
&&\ =  \ \sum_{l_1=1}^\infty \cdots \sum_{l_{J-1}=1}^\infty \P\left ( Y_i=y_i: 1\leq i\leq J, \ \ {\rm and \ \ } V_{i+1}-V_i=l_i: 1\leq i\leq J\right)\\
&&\ = \  \sum_{l_1 = 1}^\infty \cdots \sum_{l_{J-1}=1}^\infty \P(T_1=y_1)\prod_{i=1}^{J-1}Q_{y_iy_i}^{l_i-1} Q_{y_iy_{i+1}}
\ = \ 
\P(T_1=y_1)\prod_{i=1}^{J-1}\frac{Q_{y_iy_{i+1}}}{1-Q_{y_iy_i}}\\
&&\ = \   \P(T_1=y_1)\prod_{i=1}^{J-1}K(y_i,y_{i+1})\ = \ 
    \P(T_1=y_1)\prod_{i=1}^{n-1}K(y_i,y_{i+1}).
\end{eqnarray*}

We conclude therefore that ${\bf Y}$ is a Markov chain with kernel $K$. \qed

\subsubsection{Proof of Part (3)}  Recall the definitions of the increasing random sequences ${\bf V}$ and ${\bf W}$ with $V_1=W_1=1$ (cf. \eqref{V_def}), and ${\bf P^V}$ and ${\bf P^W}$.  For each realization, ${\bf V}$ and ${\bf W}$ are functions of the Markov sequence ${\bf T}$.  Therefore, conditional on ${\bf T}$ given the possible trajectory ${\bf t}$ with respect to ${\bf T}$, it follows immediately from the proved Part (1) that ${\bf P^V}\bigr | {\bf T} = {\bf t}$ and ${\bf P^W}\bigr | {\bf T}={\bf t}$ are RAMs. \qed

\subsubsection{Proof of Part (4)}  If ${\bf P}$ is a RAM, we have $\sum_{i\geq 1} P^V_i = \sum_{i\geq 1}P_i = 1$ a.s. or $\sum_{i\geq 1}P^W_i = 1$ a.s. respectively. Hence, in the two situations, we need only show the associated fractions ${\bf X^V}$ or ${\bf X^W}$ are conditionally independent or iid to deduce, respectively, that ${\bf P^V}\bigr | {\bf Y}={\bf y}$ is a RAM or ${\bf P^W}\bigr | T_1=t_1$ is a self-similar RAM.  We consider first the claim for ${\bf P^V}$, before discussing the statement for ${\bf P^W}$ at the end.

Let ${\bf y}$ be a possible sequence with respect to ${\bf Y}$, and associate to ${\bf y}$ the time $J$ as in the proof of part (2).
With respect to fixed times $V_{i+1}-V_i = l_i\in {\mathbb N}$ for $1\leq i<J$, noting \eqref{2_seq}, we have for $m\leq n<J$ that
\begin{eqnarray}
\label{part4_eqn}
&&\P\left( Y_i = y_i: 1\leq i\leq n +1, \ \ {\rm and \ \ }  V_{i+1}-V_i = l_i: 1\leq i\leq m\right) \nonumber\\
&& = \P\left(T_1=y_1\right)\prod_{i=1}^{m} Q_{y_i,y_i}^{l_i -1}Q_{y_i,y_{i+1}}\prod_{i=m+1}^n \sum_{l_i=1}^\infty Q^{l_i-1}_{y_i,y_i}Q_{y_i,y_{i+1}}\nonumber\\
&&= \P\left(T_1=y_1\right)\prod_{i=1}^{n}\frac{Q_{y_i,y_{i+1}}}{1-Q_{y_i,y_i}} \prod_{i=1}^{m} Q_{y_i,y_i}^{l_i -1}(1-Q_{y_i,y_i})\nonumber\\
&& = \P\left(Y_i=y_i: 1\leq i\leq n+1\right) \prod_{i=1}^{m} Q_{y_i,y_i}^{l_i -1}(1-Q_{y_i,y_i}).
\end{eqnarray}

Suppose $J<\infty$, and define $l_J=\infty$. For $n\geq J$, noting the calculation after \eqref{2_seq}, write
\begin{eqnarray*}
&&\P\left( Y_i = y_i: 1\leq i\leq n +1, \ \ {\rm and \ \ }  V_{i+1}-V_i = l_i: 1\leq i\leq J\right) \nonumber\\
&&= \P\left( Y_i = y_i: 1\leq i\leq J, \ \ {\rm and \ \ }  V_{i+1}-V_i = l_i: 1\leq i\leq J\right) \nonumber \\
&& = \P\left(T_1=y_1\right)\prod_{i=1}^{J-1} Q_{y_i,y_i}^{l_i -1}Q_{y_i,y_{i+1}}\nonumber\\
&&= \P\left(T_1=y_1\right)\prod_{i=1}^{J-1}\frac{Q_{y_i,y_{i+1}}}{1-Q_{y_i,y_i}} \prod_{i=1}^{J-1} Q_{y_i,y_i}^{l_i -1}(1-Q_{y_i,y_i})\nonumber\\
&& = \P\left(Y_i=y_i: 1\leq i\leq n+1\right) \prod_{i=1}^{J-1} Q_{y_i,y_i}^{l_i -1}(1-Q_{y_i,y_i}).
\end{eqnarray*}

Recall \eqref{Ram_2}, and consider the variables ${\bf X^V}=\{X^V_j\}_{j\geq 1}$ where 
\begin{align}
\label{V_ram}
X^V_j  =\left\{\begin{array}{cl}
\sum_{i=V_j}^{V_{j+1}-1}X_i\prod_{l=V_j}^{i-1}(1-X_l)&\\
\ \ \ \ \ \ \ \ \ =1-\prod_{i=V_j}^{V_{j+1}-1}(1-X_i) & {\rm if \ } V_j<\infty\\
 1 & {\rm if \ } V_j=\infty.\end{array}\right.
\end{align}
When ${\bf X}$ is composed of iid variables, that is ${\bf P}$ is a self-similar RAM, we will argue now that the fractions
${\bf X^V}\bigr | {\bf Y}={\bf y}$ form a conditionally independent sequence, and therefore
${\bf P^V}\bigr|{\bf Y}={\bf y}$ is RAM. We split into subcases, $J=\infty$ versus $J<\infty$.

When $J=\infty$, let $r\geq n\geq 1$, and $\langle \alpha_i: 1\leq i\leq n\rangle\in (0,1)^n$. Write
\begin{align}
 & \P\left(1-X^V_j\leq\alpha_j: 1\leq j\leq n\biggr| Y_j = y_j: 1\leq j\leq r+1 \right)\nonumber\\
&= \sum_{l_1=1}^\infty \cdots \sum_{l_n=1}^\infty
\P\Bigr(1-X^V_j\leq\alpha_j: 1\leq j\leq n \biggr|\nonumber\\
& \ \ \ \ \ \ \ \ \ \ \ \ \ \ \ \ \ \ \ \  \left\{ V_{i+1}-V_i = l_i, 1\leq i\leq n\right\}
\cap \left\{ Y_i = y_i: 1\leq i\leq r+1 \right\}\Bigr)\nonumber\\
&\ \ \ \ \ \ \ \ \ \ \ \ \ \  \times \P\left(V_{i+1}-V_i = l_i, 1\leq i\leq n
\biggr|Y_j = y_j: 1\leq j\leq r+1 \right).
\label{1_seq}
\end{align}

Relative to $\{l_j\}_{j=1}^n$, define the sequence ${\bf u}=\{u_j\}_{j=1}^{n+1}$ where $u_1=1$ and $u_j = 1+\sum_{k=1}^{j-1} l_k$ for $2\leq j\leq n+1$, which marks the first $n$ times when ${\bf Y}$ changes states.  In particular, on the event $\big\{ V_{i+1}-V_i = l_i, 1\leq i\leq n\big\}$, we have
$V_j = u_j$ for $1\leq j\leq n+1$. Given this event, from \eqref{Ram_2}, the fractions $\{X^V_j\}_{j=1}^n$ satisfy
$1-X^V_j = \prod_{k=u_j}^{u_{j+1}-1} (1-X_k)$ for $1\leq j\leq n$ and are independent, no longer depending on ${\bf Y}$.  The last display \eqref{1_seq}, noting \eqref{part4_eqn}, equals
\begin{align}
& \sum_{l_1=1}^\infty \cdots \sum_{l_{n}=1}^\infty \P\left(\prod_{k=u_j}^{u_{j+1}-1}(1-X_k)\leq\alpha_j: 1\leq j\leq n\right)\left[\prod_{j=1}^nQ_{y_jy_j}^{l_j-1}(1-Q_{y_jy_j})\right]\nonumber\\
= & \sum_{l_1=1}^\infty \cdots \sum_{l_{n}=1}^\infty  \prod_{j=1}^n \P\left(\prod_{k=u_j}^{u_{j+1}-1} (1-X_k)\leq\alpha_j\right)Q_{y_jy_j}^{l_j-1}(1-Q_{y_jy_j})\nonumber\\
= & \sum_{l_1=1}^\infty \cdots \sum_{l_{n}=1}^\infty \prod_{j=1}^n \P\left(\prod_{k=1}^{l_j}(1-X_k)\leq\alpha_j\right)Q_{y_jy_j}^{l_j-1}(1-Q_{y_jy_j})\nonumber\\
=&  \prod_{j=1}^n \sum_{l_j=1}^\infty \P\left(\prod_{k=1}^{l_j}(1-X_k)\leq\alpha_j\right)Q_{y_jy_j}^{l_j-1}(1-Q_{y_jy_j}),
\label{fractions X V}
\end{align}
in factored form.  Therefore, the fractions ${\bf X^V}$ are conditionally independent as desired and ${\bf P^V}\bigr | {\bf Y}={\bf y}$ is a RAM in the case $J=\infty$.

When $J<\infty$, note that the collection $\{X^V_j|{\bf Y=y}\}_{j\geq J}$ is a deterministic sequence of 1s. Thus, we need only show that the proportions $\{X^V_j|{\bf Y=y}\}_{j=1}^{J-1}$ are independent. Define $l_J=\infty$ and for $n\geq J$, write that
\begin{align}
 & \P\left(1-X^V_j\leq\alpha_j: 1\leq j<J\biggr| Y_j = y_j: 1\leq j\leq n+1\right)\nonumber\\
= &\sum_{l_1=1}^\infty \cdots \sum_{l_{J-1}=1}^\infty
\P\Bigr(1-X^V_j\leq\alpha_j: 1\leq j<J \biggr|\nonumber\\
&\ \ \ \ \ \ \ \ \ \ \ \ \ \ \ \ \ \ \ 
\left\{ V_{i+1}-V_i = l_i, 1\leq i\leq J\right\}
\cap \left\{ Y_i = y_i: 1\leq i\leq n+1\right\}\Bigr)\nonumber\\
&\ \ \ \ \ \ \ \ \ \ \ \  \times \P\left(
 V_{i+1}-V_i = l_i, 1\leq i\leq J\biggr|Y_j = y_j: 1\leq j\leq n+1\right).
\label{1_seqprime}
\end{align}

Define for $j\leq J+1$ the sequence $u_j$ as before, and note $u_{J+1}=\infty$. One derives similarly, noting the calculation after \eqref{part4_eqn}, that the last display \eqref{1_seqprime} equals
$$\prod_{j=1}^{J-1} \sum_{l_j=1}^\infty \P\left(\prod_{k=1}^{l_j}(1-X_k)\leq\alpha_j\right)Q_{y_jy_j}^{l_j-1}(1-Q_{y_jy_j}),$$
in factored form. Therefore, the fractions ${\bf X^V}$ are conditionally independent as desired and ${\bf P^V}\bigr | {\bf Y}={\bf y}$ is a RAM also in the case $J<\infty$.

We now aim to show when ${\bf P}$ is a self-similar RAM and $t_1$ is a recurrent state for ${\bf T}$ that ${\bf P^W}|T_1=t_1$ is a self-similar RAM. As $t_1$ is a recurrent state with respect to ${\bf T}$, almost surely the sequence ${\bf W}$ does not take on the value $\infty$. Consider the variables ${\bf X^W}=\{X^W_j\}_{j\geq 1}$ where 
\begin{align*}
X^W_j  = \left\{\begin{array}{rl}\sum_{i=W_j}^{W_{j+1}-1}X_i\prod_{l=W_j}^{i-1}(1-X_l) & {\rm if \ } W_j<\infty\\
  1 & {\rm if \ } W_j=\infty. \end{array}\right.
\end{align*}
Then, noting \eqref{Ram_2}, almost surely, $X^W_j=\sum_{i=W_j}^{W_{j+1}-1}X_i\prod_{l=W_j}^{i-1}(1-X_l)$.

Following the above argument, with respect to ${\bf X^V}$ when $J=\infty$, we arrive at the equation
\begin{align*}
&\P\left(1-X^W_j\leq\alpha_j: 1\leq j\leq n\biggr| T_1=t_1\right)\\
 = &\sum_{l_1=1}^\infty \cdots \sum_{l_n=1}^\infty  \prod_{j=1}^n \P\Big(\prod_{k=1}^{l_j}(1-X_k)\leq\alpha_j\Big)\P\left(W_{j+1}-W_j = l_i: 1\leq i\leq n\bigr | T_1=t_1\right).
\end{align*}
But, given $T_1=t_1$, the variables $\{W_{j+1}-W_j\}_{j\geq 1}$ are iid cycle lengths of the Markov chain.  Hence,
the last display equals
$$\prod_{j=1}^n \sum_{l_j=1}^\infty \P\Big(\prod_{k=1}^{l_j}(1-X_k)\leq\alpha_j\Big) \P\left ( W_2 - W_1 = l_j\bigr | T_1=t_1\right),$$
indicating the fractions ${\bf X^W}$ are conditionally iid, and therefore
${\bf P^W}\bigr|T_1=t_1$ is a self-similar RAM.  
\qed

\subsection{Proof of Theorem \ref{Gem to MCcGEM}: GEM to MCcGEM}

Let ${\bf P}= \langle P_i: i\geq 1\rangle$ be a GEM$(\theta)$ sequence, with respect to corresponding iid Beta$(1,\theta)$ proportions ${\bf X}=\{X_j\}_{j\geq 1}$.  Also, let ${\bf T}$ be an independent Markov chain on $\mathcal{X}$, starting from distribution $\mu$, with homogeneous kernel $Q$.

In Part (2) of Theorem \ref{RAM}, we showed that the associated sequence ${\bf Y}$ is a Markov chain with transition kernel $K$ on $\X$ such that
\begin{align*}
K(z,w)  =\left\{\begin{array}{rl}
\frac{Q_{z,w}}{1-Q_{z,z}}  & {\rm when \ } z\neq w\ {\rm and \ } Q_{z,z}\neq 1\\
 1  & {\rm when \ } z=w\ {\rm and \ } Q_{z,z}=1\\
 0  & {\rm otherwise. \ }
\end{array}\right.
\end{align*}
   By inspection, the kernel $K = K_{G}$, in the definition of the MCcGEM distribution \eqref{MCcGEM_kernel}, where $G=\theta(Q-I)$.

Recall now the switch times ${\bf V}$ with respect to the chain ${\bf T}$ (cf. \eqref{V_def}).  In Part (4) of Theorem \ref{RAM}, as $P$ is a self-similar RAM, we proved that ${\bf P^V}$, conditional on ${\bf Y}$, is a RAM.  In particular, we showed that the associated fractions ${\bf X^V}=\{X^V_j\}_{j\geq 1}$, given ${\bf Y}$, are independent variables.  Hence, to identify the joint distribution of $({\bf P^V}, {\bf Y})$, we need only find the conditional distribution of each fraction $X^V_j\bigr |{\bf Y}$, for $j\geq 1$.
 
 To this end, let ${\bf y}$ be a possible sequence for ${\bf Y}$. Associate $J=\sup\{j:y_j\neq y_{j-1}\}$ to ${\bf y}$ as before. Recall from \eqref{V_ram} that $X^V_j = 1-\prod_{k=V_j}^{V_{j+1}-1} (1-X_k)$ for $j\leq J$. Write, for $j<\min\{J,n\}$ and $m\geq 1$,
\begin{align*}
&\E\left[(1-X^V_j)^m\biggr|Y_i=y_i:1\leq i\leq n\right] \\
=&\E\Bigg[\E\Bigg[\prod_{k=V_j}^{V_{j+1}-1}(1-X_k)^m\biggr|Y_i=y_i, V_{i+1}-V_i: 1\leq i\leq n\Bigg]\biggr|Y_i=y_i: 1\leq i\leq n\Bigg].
\end{align*}

  Note now, if $Z$ is a Beta$(1,\alpha)$ random variable, then E$[(1-Z)^m]=\frac\alpha{\alpha+m}$.
	Then, by the independence of ${\bf X}$ and ${\bf T}$, noting from \eqref{part4_eqn} that
	$\P(V_{j+1}-V_j=\ell|Y_i = y_i: 1\leq i\leq n) = Q_{y_i,y_i}^{\ell-1}(1-Q_{y_i,y_i})$, the above display equals
  \begin{align*}
 &\E\left[\left(\frac\theta{\theta+m}\right)^{V_{j+1}-V_j}\biggr|Y_i=y_i: 1\leq i\leq n\right]\\
 & \ \ =\sum_{l=1}^\infty\left(\frac\theta{\theta+m}\right)^lQ_{y_j,y_j}^{l-1}(1-Q_{y_j,y_j}) \ = \ \frac{\theta(1-Q_{y_j,y_j})}{\theta(1-Q_{y_j,y_j})+m}.
\end{align*}

Thus, we see that $X^V_j\biggr | {\bf Y}={\bf y}$ is a Beta$(1,\theta(1-Q_{y_j,y_j}))$ random variable when $j<J$. 

When $J\leq j<\infty$, recall that $y_j$ is an absorbing state, and so $Q_{y_j,y_j}=1$ and $X_j^V=1$. Thus $X_j^V|{\bf Y=y}\sim$ Beta$(1,0)=$ Beta$(1,\theta(1-Q_{y_j,y_j}))$. 

Then, for all $j\geq 1$, we see that $X^V_j\biggr | {\bf Y}={\bf y}$ is a Beta$(1,\theta(1-Q_{y_j,y_j}))$ random variable.  Hence ${\bf P^V}\bigr | {\bf  Y}={\bf y}$ is a disordered GEM with parameters $\theta(1-Q_{y_j,y_j}) = -G_{y_j,y_j}$ for $j\geq 1$.  Therefore, we conclude that $({\bf P^V}, {\bf Y})$ has 
 a MCcGEM$(\theta(Q-I))$ distribution with respect to $\mu$. \qed

\subsection{Proof of Theorem \ref{inhom to MCcGEM}:  Time inhomogeneous MC to MCcGEM}

We first specify certain asymptotics which will be helpful, before going to the main body of the proof in Subsection \ref{completion}.

\begin{lemma}\label{f1}
For $\gamma>0$ and integers $1\leq m\leq n$, let
$$f_m^n(\gamma)=\prod_{j=m+1}^n\left(1-\frac\gamma j\right).$$
 Then, for $0<a<b$ and integers $M\geq 0$, we have
 $$\lim\limits_{n\rightarrow\infty}f_M^n(\gamma)n^\gamma=\frac{\Gamma(M+1)}{\Gamma(M+1-\gamma)} \ \ \text{and} \ \ \lim\limits_{n\rightarrow\infty}f_{\lfloor an\rfloor}^{\lfloor bn\rfloor}(\gamma)\left ( \frac{b}{a}\right)^\gamma =1.$$
\end{lemma}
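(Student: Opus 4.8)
The plan is to take logarithms and compare the resulting sums with the divergent harmonic-type series whose asymptotics are classical. Write
\[
\log\big(f_M^n(\gamma)\,n^\gamma\big) = \gamma\log n + \sum_{j=M+1}^n \log\!\left(1-\frac{\gamma}{j}\right).
\]
For the first limit, I would use the expansion $\log(1-\gamma/j) = -\gamma/j + O(j^{-2})$, valid for $j$ large (say $j > \gamma$, which holds for all but finitely many terms). Then $\sum_{j=M+1}^n \log(1-\gamma/j) = -\gamma\sum_{j=M+1}^n j^{-1} + C + o(1)$ for a convergent constant $C$ coming from the $O(j^{-2})$ tail together with the finitely many small-$j$ corrections. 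Since $\sum_{j=1}^n j^{-1} = \log n + \varepsilon + o(1)$ with $\varepsilon$ the Euler--Mascheroni constant, the $\gamma\log n$ terms cancel and $f_M^n(\gamma)n^\gamma$ converges; to pin down the limiting value as $\Gamma(M+1)/\Gamma(M+1-\gamma)$ rather than merely "some constant", I would instead avoid ad hoc constants altogether and argue via the Gamma function directly. Namely, write
\[
f_M^n(\gamma) = \prod_{j=M+1}^n \frac{j-\gamma}{j} = \frac{\Gamma(n+1-\gamma)}{\Gamma(M+1-\gamma)}\cdot\frac{\Gamma(M+1)}{\Gamma(n+1)},
\]
using the telescoping identity $\prod_{j=M+1}^n (j-\gamma) = \Gamma(n+1-\gamma)/\Gamma(M+1-\gamma)$ and $\prod_{j=M+1}^n j = \Gamma(n+1)/\Gamma(M+1)$. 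Then multiply by $n^\gamma$ and invoke the standard asymptotic $\Gamma(n+1-\gamma)/\Gamma(n+1) \sim n^{-\gamma}$ as $n\to\infty$ (a consequence of Stirling's formula, or of $\Gamma(x+a)/\Gamma(x+b)\sim x^{a-b}$). This immediately yields $f_M^n(\gamma)n^\gamma \to \Gamma(M+1)/\Gamma(M+1-\gamma)$.

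For the second limit the same closed form does the work: with $m=\lfloor an\rfloor$ and $n'=\lfloor bn\rfloor$,
\[
f_{\lfloor an\rfloor}^{\lfloor bn\rfloor}(\gamma) = \frac{\Gamma(\lfloor bn\rfloor+1-\gamma)}{\Gamma(\lfloor bn\rfloor+1)}\cdot\frac{\Gamma(\lfloor an\rfloor+1)}{\Gamma(\lfloor an\rfloor+1-\gamma)} \sim \lfloor bn\rfloor^{-\gamma}\,\lfloor an\rfloor^{\gamma} \sim \left(\frac{a}{b}\right)^{\gamma},
\]
again by $\Gamma(x+1-\gamma)/\Gamma(x+1)\sim x^{-\gamma}$ applied at $x=\lfloor an\rfloor$ and $x=\lfloor bn\rfloor$, and using $\lfloor an\rfloor/(an)\to 1$, $\lfloor bn\rfloor/(bn)\to 1$. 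Multiplying by $(b/a)^\gamma$ gives the claimed limit $1$.

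The only mildly delicate point is making the ratio-of-Gammas asymptotic $\Gamma(x+\alpha)/\Gamma(x+\beta)\to x^{\alpha-\beta}$ precise and citing it cleanly (it is standard, e.g.\ from Stirling), and checking that for the second limit one needs $0<a<b$ only to ensure $m\le n'$ eventually and that both indices tend to infinity; the hypothesis $M\ge 0$ integer in the first part is just to keep the product well defined. I do not anticipate a genuine obstacle here — this is a routine asymptotics lemma — so the main task is simply to present the telescoping identity and the Gamma asymptotic in a self-contained way.
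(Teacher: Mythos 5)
Your proof is correct and takes essentially the same route as the paper: rewrite $f_m^n(\gamma)$ as a ratio of Gamma functions via telescoping, then apply the Stirling-type asymptotic $\Gamma(x+\alpha)/\Gamma(x+\beta)\sim x^{\alpha-\beta}$. The logarithmic expansion you sketch at the start is a harmless detour that you rightly abandon; the Gamma-function computation you settle on is exactly the paper's argument.
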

\begin{proof} Write
$$f_l^n(\gamma)=\prod_{j=l+1}^n\left(1-\frac\gamma j\right)=\frac{\prod_{j=l}^n(j-\gamma)}{\prod_{j=l}^n j}=\frac{\Gamma(n+1-\gamma)\Gamma(l+1)}{\Gamma(n+1)\Gamma(l+1-\gamma)}.$$
By Stirling's approximation, 
for $u,v\in \mathbb{R}$, we have 
$\frac{\Gamma(n+u)}{\Gamma(n+v)}n^{v-u}\rightarrow 1$ as $n\rightarrow\infty$,
from which the desired asymptotics follow immediately.
\end{proof}

\begin{prop}\label{f2} Let $r\geq 1$ be an integer. Let also $\{a_i\}_{j=1}^r$, $\{b_i\}_{i=1}^r$, and $\{\gamma_i\}_{i=1}^r$ be collections of positive numbers such that $a_j<b_j$ for $1\leq j\leq r$. Then,
$$\lim_{s_0\rightarrow\infty}\sum_{s_1=\lceil a_1s_0\rceil}^{\lceil b_1s_0\rceil-1}\cdots\sum_{s_r=\lceil a_rs_{r-1}\rceil}^{\lceil b_rs_{r-1}\rceil-1}
\left[\prod_{j=1}^r{s_j}^{-1}f_{s_j}^{\lfloor b_js_{j-1}\rfloor -1}(\gamma_j)\right] \ = \ \prod_{j=1}^r\gamma_j^{-1}\Big(1-\Big(\frac{a_j}{b_j}\Big)^{\gamma_j}\Big).$$
\end{prop}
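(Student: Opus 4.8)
The plan is to prove Proposition \ref{f2} by induction on $r$, peeling off the innermost sum $\sum_{s_r}$ first and then observing that the remaining expression is of the same form with $r$ replaced by $r-1$. The key analytic input is Lemma \ref{f1}, which tells us how the products $f_m^n(\gamma)$ behave when $m$ and $n$ both scale with a large parameter.

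For the base case $r=1$, I would write the single sum $\sum_{s_1=\lceil a_1 s_0\rceil}^{\lceil b_1 s_0\rceil - 1} s_1^{-1} f_{s_1}^{\lfloor b_1 s_0\rfloor - 1}(\gamma_1)$ and recognize it as a Riemann-type sum. Using the first limit in Lemma \ref{f1} in the normalized form $f_m^n(\gamma) \sim (n/m)^\gamma$ as $m,n\to\infty$ with $m/n$ bounded away from $0$ (which follows by combining the two displayed limits, or directly from the Stirling expression $f_l^n(\gamma) = \Gamma(n+1-\gamma)\Gamma(l+1)/(\Gamma(n+1)\Gamma(l+1-\gamma))$), each term is asymptotic to $s_1^{-1}(b_1 s_0/s_1)^{\gamma_1}$. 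Setting $s_1 = x s_0$, the sum $\sum s_1^{-1}(b_1 s_0 / s_1)^{\gamma_1}$ over $s_1 \in [a_1 s_0, b_1 s_0)$ converges to $\int_{a_1}^{b_1} x^{-1}(b_1/x)^{\gamma_1}\,dx = b_1^{\gamma_1}\int_{a_1}^{b_1} x^{-1-\gamma_1}\,dx = \gamma_1^{-1}(1 - (a_1/b_1)^{\gamma_1})$, as claimed. I would need a uniform estimate (e.g. $f_m^n(\gamma) \le (n/m)^\gamma$ exactly, or up to a constant) to justify dominated convergence of the sum against the integral.

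For the inductive step, I would fix $s_0$ and perform only the innermost sum over $s_r$. Writing $f_{s_r}^{\lfloor b_r s_{r-1}\rfloor - 1}(\gamma_r) \sim (b_r s_{r-1}/s_r)^{\gamma_r}$ and summing over $s_r \in [\lceil a_r s_{r-1}\rceil, \lceil b_r s_{r-1}\rceil)$ exactly as in the base case gives, in the limit $s_{r-1}\to\infty$, the factor $\gamma_r^{-1}(1 - (a_r/b_r)^{\gamma_r})$. The subtlety is that $s_{r-1}$ is itself a summation index ranging over $[a_{r-1}s_{r-2}, b_{r-1}s_{r-2})$, so as $s_0 \to \infty$ we have $s_{r-1}\to\infty$ throughout the range, and I would want this innermost convergence to hold uniformly enough in $s_{r-1}$ (again via the exact bound $f_m^n(\gamma)\le (n/m)^\gamma$ or a two-sided version) to pull it out of the outer sums. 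After extracting that factor, the remaining $(r-1)$-fold sum is literally the statement of the proposition at level $r-1$ (with the same $a_j, b_j, \gamma_j$ for $j \le r-1$), so the induction hypothesis finishes it, yielding $\prod_{j=1}^{r-1}\gamma_j^{-1}(1-(a_j/b_j)^{\gamma_j})$ times the extracted factor, which is the desired product.

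The main obstacle I anticipate is bookkeeping the uniformity: one must control the error in replacing $f_{s_j}^{\lfloor b_j s_{j-1}\rfloor-1}(\gamma_j)$ by $(b_j s_{j-1}/s_j)^{\gamma_j}$ simultaneously across all $r$ nested summation ranges, and argue that the nested Riemann sums converge to nested integrals. The cleanest route is probably to establish, from the Stirling estimate in Lemma \ref{f1}, two-sided bounds $c(n/m)^\gamma \le f_m^n(\gamma) \le C(n/m)^\gamma$ valid uniformly for $1 \le m \le n$ with $m/n$ bounded below, then run the induction with explicit $O(\cdot)$ error terms, or alternatively to first prove the result with the $f$'s replaced exactly by the ratios $(n/m)^\gamma$ (where the nested sums are exactly geometric-like and telescope to the integrals) and then show the substitution error vanishes. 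Either way, the combinatorial structure — peeling the innermost sum and recognizing the self-similar form — is what makes the induction go through.
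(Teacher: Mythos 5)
Your argument follows essentially the same route as the paper's: both hinge on the Stirling-based two-sided asymptotic $f_m^n(\gamma)\sim (n/m)^{\gamma}$ from Lemma \ref{f1}, convert the resulting sum into a Riemann integral of $s^{\gamma-1}$, and evaluate. The paper writes out only $r=1$ and declares the extension to $r>1$ straightforward; your inductive peeling of the innermost sum, together with the two-sided $(1\pm\epsilon)$-type bounds you propose to control uniformity across nested ranges, is precisely the squeeze the paper uses and is the natural way to make that extension explicit.
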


\begin{proof}
The argument follows by inputting the asymptotics in Lemma \ref{f1}.  We show only the case $r=1$, as the extension to $r>1$ is straightforward.

Again, by Stirling's approximation, for each $u,v\in\mathbb{R}$,
$\lim_{n\rightarrow\infty}\frac{\Gamma(n+u)}{\Gamma(n+v)}n^{v-u} =  1$.
Then, for $\epsilon>0$ and all large $n$, we have
$$(1-\epsilon)n^{u-v}\leq\frac{\Gamma(n+u)}{\Gamma(n+v)}\leq(1+\epsilon)n^{u-v}.$$
Hence, for $\epsilon, a,b,\gamma>0$ with $a<b$, and sufficiently large $n$, we estimate
\begin{align}
\label{f2_eq}
 & (1-\epsilon)^2\sum_{s=\lfloor an\rfloor}^{\lfloor bn\rfloor-1}\lfloor bn\rfloor^{-\gamma}s^{\gamma-1} \nonumber\\
 & \leq\sum_{s=\lfloor an\rfloor}^{\lfloor bn\rfloor-1}\frac{\Gamma(\lfloor bn\rfloor -\gamma)\Gamma(s)}{\Gamma(\lfloor bn\rfloor)\Gamma(s+1-\gamma)} \ =\ \sum_{s=\lfloor an\rfloor}^{\lfloor bn\rfloor-1}s^{-1}f_{s}^{\lfloor bn\rfloor -1}(\gamma) \nonumber\\
 & \leq \ (1+\epsilon)^2\sum_{s=\lfloor an\rfloor}^{\lfloor bn\rfloor-1}\lfloor bn\rfloor^{-\gamma}s^{\gamma-1}.
\end{align}

Now, by the monotonicity of $s^{\gamma-1}$, we have for $n>2/a$ that $\sum_{s=\lfloor an\rfloor}^{\lfloor bn\rfloor-1}s^{\gamma-1}$ is between the integrals $\int_{\lfloor an\rfloor-1}^{\lfloor bn\rfloor-1}s^{\gamma-1}ds$ and $\int_{\lfloor an\rfloor}^{\lfloor bn\rfloor}s^{\gamma-1}ds$.  We may compute
\begin{align*}
&\lim_{n\rightarrow\infty}\lfloor bn\rfloor^{-\gamma}\int_{\lfloor an\rfloor-1}^{\lfloor bn\rfloor-1}s^{\gamma-1}ds\\
&\ \ \ = \ \lim_{n\rightarrow\infty}\lfloor bn\rfloor^{-\gamma}\int_{\lfloor an\rfloor}^{\lfloor bn\rfloor}s^{\gamma-1}ds \ = \  
\frac1\gamma\left(1-\left(\frac ab\right)^\gamma\right).
\end{align*}
Then,
inserting into \eqref{f2_eq}, the proposition follows for $r=1$. 
\end{proof}

We now show a form of `weak ergodicity' for the Markov chain ${\bf T}$.
\begin{lemma}
\label{lem_weak}
For a generator matrix $G$, let $\theta>0$, and $M\geq 1$ be an integer, such that $Q:=I+G/\theta$ and $I+G/M$ are non-negative kernels on $\mathcal{X}$. Recall that $K_n = I +\frac{G}{n}{\mathbbm 1}(n>M)$ for $n\geq 1$ (cf. \eqref{K_n_eqn}).  Let $\pi$ be a stochastic vector and $\mu$ be a stationary distribution for $Q$ such that $\pi^tQ^n\rightarrow \mu^t$ entry-wise. Then, as $n\rightarrow\infty$, both (a) $\mu^n:= \pi^t\prod_{i=1}^nK_i\rightarrow \mu^t$, and (b) $\big(\mu^n\big)^tQ\rightarrow\mu^t$, hold entry-wise.
\end{lemma}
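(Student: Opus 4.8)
The plan is to establish (a) first and then deduce (b) as a quick consequence. For (a), the key observation is that $K_i = I + G/i$ for all $i > M$, and since the initial segment $\prod_{i=1}^{M} K_i$ is a fixed stochastic matrix, it suffices to control $\pi^t \prod_{i=1}^{M} K_i \prod_{i=M+1}^{n} (I + G/i)$, or in fact to show the general statement that $\rho^t \prod_{i=m+1}^n (I+G/i) \to \mu^t$ for any stochastic starting vector $\rho$ (applied with $\rho^t = \pi^t\prod_{i=1}^M K_i$ and $m=M$). The natural device is to compare the inhomogeneous product $\prod_{i=m+1}^n (I+G/i)$ with a power $Q^{k(n)} = (I+G/\theta)^{k(n)}$ for a suitable $k(n)\to\infty$, exploiting that all the factors $I+G/i$ for large $i$, as well as $Q$, are stochastic kernels that are ``close to the identity plus a multiple of $G$.'' Concretely, I would write $\log$-type or telescoping estimates: since $(I+G/i)$ and $(I + (1/\theta)G)^{1/i \cdot \theta}$ agree to first order, one expects $\prod_{i=m+1}^n (I+G/i) \approx Q^{\theta(\log n - \log m)}$ up to an error that is summable. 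Then the hypothesis $\pi^t Q^n \to \mu^t$ — which is exactly the assertion that the Cesàro/long-run behavior of the $Q$-chain concentrates on $\mu$ — transfers to the inhomogeneous product as $n\to\infty$ because $\theta(\log n - \log m)\to\infty$.

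More carefully, here is the order of steps I would carry out. First, reduce to bounded operators: all entries of $G$ are uniformly bounded by Definition \ref{generator_def}, and $\|\rho^t A\|_1 \le \|\rho\|_1$ for stochastic $\rho$ and stochastic $A$, so all the vectors $\mu^n$ live in the (weak-$*$ compact, if $\mathcal X$ is infinite, or simply compact) simplex $\Delta_\X$. Second, show that any subsequential limit point $\lambda^t$ of $(\mu^n)$ (in the sense of entrywise/finite-dimensional convergence) must satisfy $\lambda^t Q = \lambda^t$: this uses that $\mu^{n+1} = (\mu^n)^t K_{n+1} = (\mu^n)^t(I + G/(n+1))$ for $n\ge M$, so $(\mu^{n+1} - \mu^n)^t = (\mu^n)^t G/(n+1) \to 0$ entrywise while $(\mu^n)^t G = (n+1)(\mu^{n+1}-\mu^n)^t$; combined with boundedness this forces $\lambda^t G = 0$, i.e. $\lambda^t Q = \lambda^t$. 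Third — and this is the step I expect to be the main obstacle — I must pin down \emph{which} stationary vector $\lambda$ is obtained, since $Q$ may have several stationary distributions (Remark 3 in Subsection \ref{remarks subsection} explicitly allows multiple positive-recurrent classes, plus null-recurrent and transient states). The resolution is to feed in the hypothesis $\pi^t Q^n \to \mu^t$: one shows that the inhomogeneous product ``follows'' $\pi^t Q^{(\cdot)}$ closely enough that the limit must be $\mu$ and not some other stationary vector. I would make this precise by comparing, for each fixed large $N$, the vector $\mu^n$ with $\mu^N$ pushed forward: $\mu^n = (\mu^N)^t \prod_{i=N+1}^n(I+G/i)$, and then approximate $\prod_{i=N+1}^n (I+G/i)$ by $Q^{m(N,n)}$ with $m(N,n) \approx \theta(\log n - \log N)\to\infty$; since $\mu^N$ is itself within $\epsilon$ of the simplex-face spanned appropriately and $(\mu^N)^t Q^{m}\to$ the $\mu$-component of $\mu^N$, and since $(\mu^N) \to \mu$, one closes the loop. (Alternatively, and perhaps more cleanly: one shows directly that $\|\mu^n - \pi^t Q^{k(n)}\|$-type differences vanish for an appropriate $k(n)\to\infty$, using a Gronwall/telescoping bound on $\prod(I+G/i)$ versus $Q^{k}$; then $\pi^t Q^{k(n)}\to\mu^t$ by hypothesis finishes it.)

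For part (b), once (a) is known the argument is immediate: $(\mu^n)^t Q \to \mu^t Q = \mu^t$, where the first convergence is continuity of right-multiplication by the fixed bounded stochastic matrix $Q$ with respect to entrywise convergence on the compact simplex (for infinite $\mathcal X$ one checks entrywise convergence is preserved under multiplication by a stochastic kernel with uniformly summable rows, which holds since rows of $Q$ sum to $1$; dominated convergence applies), and the second is the stationarity $\mu^t Q = \mu^t$. I expect the genuinely delicate point to be the quantitative comparison of the inhomogeneous product $\prod_{i=M+1}^n(I+G/i)$ with a power of $Q$, in particular making the error estimates work uniformly when $\mathcal X$ is countably infinite; the asymptotics of the scalar products $f_m^n(\gamma)$ from Lemma \ref{f1} are exactly the tool suggesting the right ``number of steps'' $k(n)\asymp \theta\log(n/M)$ and bounding the discrepancy, so I would lean on that lemma (or its proof technique via Stirling) to control the size of the correction terms.
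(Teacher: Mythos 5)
Your high-level intuition — relate $\prod_i K_i$ to powers of $Q$ and feed in the hypothesis $\pi^t Q^n\to\mu^t$ — is in the right direction, but as written the proposal has two concrete gaps that would derail it.

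First, the subsequential-limit argument is invalid as stated. You write that $(\mu^{n+1}-\mu^n)^t=(\mu^n)^tG/(n+1)\to0$ entrywise ``while $(\mu^n)^tG=(n+1)(\mu^{n+1}-\mu^n)^t$; combined with boundedness this forces $\lambda^tG=0$.'' But this does not follow: the factor $(n+1)$ exactly cancels the $1/(n+1)$, so knowing $\mu^{n+1}-\mu^n\to 0$ tells you nothing about $(\mu^n)^tG$ beyond the trivial bound $O(1)$; the subsequential limit $\lambda$ could perfectly well have $\lambda^t G\neq 0$. Turning this into a genuine stochastic-approximation/ODE argument (that the Euler scheme with steps $1/n$ tracks $\dot x=x^tG$ and hence must settle on the center manifold $xG=0$) is a real piece of work, not a one-liner, and is not what the paper does. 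Second, the ``Gronwall/telescoping bound on $\prod(I+G/i)$ versus $Q^k$'' is not a workable matrix-norm comparison: $\prod(I+G/i)$ and $Q^k$ are both stochastic, and their difference can stay bounded away from zero in operator norm even as $n,k\to\infty$ unless one imposes a spectral gap (which the lemma does not). Any success of this comparison already secretly uses the mixture decomposition below.

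The paper's proof gets around both issues cleanly. Since $I+G/j=(1-\theta/j)I+(\theta/j)Q$, the truncated product is a nonnegative convex combination of powers of $Q$,
\begin{equation*}
\prod_{j=m+1}^n K_j \ = \ \sum_{i\ge 0} [Q^i]_n\, Q^i,\qquad [Q^i]_n\ge 0,\qquad \sum_i [Q^i]_n = 1,
\end{equation*}
and Lemma \ref{f1} gives $[Q^0]_n=f_m^n(\theta)\sim n^{-\theta}\to 0$, while for each fixed $i\ge 1$ one bounds $[Q^i]_n\le C\,n^{-\theta}(\log n)^i\to 0$. Thus the weight in this mixture escapes to infinity. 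Splitting at a fixed level $R$,
\begin{equation*}
|\mu^n_l-\mu_l|\ \le\ \sum_{i=0}^R [Q^i]_n\ +\ \max_{r>R}\big|(\mu^m-\mu)^t Q^r e_l\big|,
\end{equation*}
sending $n\to\infty$ then $R\to\infty$, with the hypothesis $\pi^t Q^n\to\mu^t$ driving the second term to zero, closes part (a) with no spectral input and no selection-of-equilibrium issue. This is the precise version of the idea you are circling; I would encourage you to restate your comparison in this ``mixture over powers'' form. For part (b), your suggestion is essentially fine: entrywise convergence of probability vectors plus $\sum_i\mu^n_i=\sum_i\mu_i=1$ gives $\ell^1$ convergence by Scheff\'e, and then $|(\mu^n)^tQe_j-\mu^tQe_j|\le\|\mu^n-\mu\|_1$. (The paper instead uses Fatou together with the stochasticity of $(\mu^n)^tQ$ to rule out any $\limsup$ exceeding $\mu_j$; both are correct, and your remark about ``rows of $Q$ summing to $1$'' is not actually what is used — the control comes from the $\ell^1$ convergence of $\mu^n$, not from the row sums.)
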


\begin{proof}  We separate into four steps.

 \vskip .1cm

{\it Step 1.} Fix an integer $m\geq \max(M,\theta)$ and write the stochastic matrix,
\begin{align*}
\prod_{j=m+1}^nK_j &=\prod_{j=m+1}^n\left[\left(1-\frac\theta j\right)I+\frac\theta jQ\right]\\
&=\left[\prod_{j=m+1}^n\left(1-\frac\theta j\right)\right]\left(I+\sum_{i=1}^{n-m}Q^i\sum_{m<j_1<\cdots <j_i\leq n}\prod_{l=1}^i\frac\theta{j_l-\theta}\right),
\end{align*}
as a polynomial in $Q$ with positive coefficients. 

\vskip .1cm
{\it Step 2.} We now show that any fixed degree coefficient of the polynomial vanishes as $n\rightarrow\infty$.   For each $i$, denote the $n$th coefficient of $Q^i$ by $[Q^i]_n$.
By Lemma \ref{f1}, $[Q^0]_n=f_m^n(\theta)\rightarrow0$ as $n\rightarrow\infty$.  Also, as $f^n_m(\theta)\sim n^{-\theta}$ by Lemma \ref{f1}, we have for $i\geq 1$ that
\begin{align*}
[Q^i]_n= & \left[\prod_{j=m+1}^n\left(1-\frac\theta j\right)\right]\sum_{m<j_1<...<j_i\leq n}\prod_{l=1}^i\frac\theta{j_l-\theta}\\
= & \ \theta^if_m^n(\theta)\sum_{j_1=m+1}^{n-i+1}\frac1{j_1-\theta}\sum_{j_2=j_1+1}^{n-i+2}\frac1{j_2-\theta}\cdots\sum_{j_i=j_{i-1}+1}^n\frac1{j_i-\theta}\\
\leq & \theta^if_m^n(\theta)\left[\ln\left(\frac{n-\theta}{m+1-\theta}\right)+\frac1{m+1-\theta}\right]^i\\
\leq & \ C(\theta, m)n^{-\theta}\left[\ln\left(\frac{n-\theta}{m+1-\theta}\right)\right]^i \xrightarrow{n\rightarrow\infty} 0.
\end{align*}

\vskip .1cm
{\it Step 3.} For each $x\in \X$, let $e_x$ denote the vector in $\mathbb{R}^\X$ with a $1$ in the entry corresponding to state $x$ and $0$'s elsewhere.
Since $Q$ is a stochastic kernel, observe for each $x\in \X$ and $n\geq m$ that
$$1= \sum_{z\in \X}e_x^t\left[\prod_{j=m+1}^n K_j\right] e_z = \sum_{i=0}^{n-m}[Q^i]_n \sum_{z\in \X}e_x^tQ^i e_z = \sum_{i=0}^{n-m}[Q^i]_n.$$

Also, as $\mu$ is a stationary eigenvector of $Q$, note that $\mu$ is also a stationary eigenvector of $\{K_n\}_{n\geq 1}$.  Recall that $\left(\pi-\mu\right)^tQ^n\rightarrow0$ as $n\rightarrow \infty$ entry-wise, and $\mu^m = \pi^t \prod_{i=1}^mK_i$.  Hence, $\left(\mu^m-\mu\right)^tQ^n\rightarrow0$ as $\prod_{i=1}^mK_i$ is 
a polynomial in $Q$. 

 With these observations, for each $x\in \X$ and positive integers $n$ and $R<n-m$, we may bound
\begin{align*}
\left|\mu^n_l-\mu_l\right| & =\left|(\mu^m-\mu)^t\left[\prod_{j=m+1}^nK_j\right]e_l\right|\\
 & =\Big|\sum_{i=0}^{n-m}[Q^i]_n(\mu^m-\mu)^tQ^ie_l\Big|\ \leq \ 
 \sum_{i=0}^R[Q^i]_n + \Big| \sum_{i=R+1}^{n-m} [Q^i]_n (\mu^m-\mu)^tQ^ie_l\Big|\\
&\leq \sum_{i=0}^R[Q^i]_n + \max_{r>R}\left|(\mu^m-\mu)^tQ^re_l\right|.
\end{align*}
 The last display converges by the calculation in Step 2 to
 $\max_{r>R}\big |\left(\mu^m-\mu\right)^TQ^re_l\big|$, as $n\rightarrow\infty$, and in turn
vanishes as $R\rightarrow\infty$.  Hence, the first limit follows.

\vskip .1cm
{\it Step 4.}  Finally, by Fatou's lemma, the proved first limit (a), and that $\mu$ is a stationary vector of $Q$, we have for each $j\in \X$ that
\begin{equation}
\label{Fatou}
\liminf_{n\rightarrow\infty} \big(\mu^n\big)^tQe_j = \liminf_{n\rightarrow\infty} \sum_{i\in \X} \mu^n_i Q_{i,j} \geq \sum_{i\in \X}\mu_i Q_{i,j} = \mu_j.
\end{equation}

Now, suppose for a particular $k\in \X$ that $\limsup_{n\rightarrow\infty} \big(\mu^n\big)^t Qe_k = L> \mu_k$.  Then, as $\big(\mu^n\big)^tQ$ is a stochastic vector, we would have for each $n\geq 1$ that
\begin{equation*}
\label{Fatou1}
1=\limsup_{n\rightarrow\infty}\sum_{l\in \X} \big(\mu^n\big)^tQe_l \geq L + \liminf_{n\rightarrow\infty}\sum_{l\neq k}\big(\mu^n\big)^tQe_l.
\end{equation*}
But, as $\mu$ is a stochastic vector and noting \eqref{Fatou}, we have by Fatou's lemma again that the last display is larger than $L+\sum_{l\neq k}\mu_l>1$, a contradiction, and the second limit (b) holds.
\end{proof}

\subsubsection{Completion of the proof of Theorem \ref{inhom to MCcGEM}}
\label{completion}
We will argue in a few steps.

\vskip .1cm
{\it Step 1.}
Recall the definition of kernel $G'$ (cf. \eqref{G'_eqn}). We now argue that $G'$ is a generator matrix:  As $\mu$ is a stationary vector of $Q$ and $G = \theta(Q-I)$, we have $\mu^t G = 0$ is the zero vector.   Since $G$ is a generator matrix, we have $G'_{i,j} = (\mu_j/\mu_i)G_{j,i}1(\mu_i\neq 0)\geq 0$ for $i\neq j$, and $\sum_{j}G'_{i,j} = \frac{1(\mu_i\neq 0)}{\mu_i}\sum_{j} \mu_jG_{j,i} = 0$.  Moreover,
$$\sup_i |G'_{i,i}| \ =\ \sup_i |G_{i,i}1(\mu_i\neq 0)|
\ \leq\ \sup_i |G_{i,i}|\ <\ \infty.$$

\vskip .1cm

{\it Step 2.}  Recall the Markov chain ${\bf T}$, with transition kernels $\{K_n = I+\frac{G}{n}1(n>M)\}_{n\geq 1}$ (cf. \eqref{K_n_eqn}), starting from $\pi$.  Recall the associated variable $N_n$ and sequence ${\bf P}_n$.

Now, for $i\geq N_n>j\geq 1$ define
\begin{equation}
\label{step2_star_star}
X_{n,j}=P_{n,j}/\left(1-\sum_{i=1}^{j-1}P_{n,i}\right)\hspace{1cm}\text{and}\hspace{1cm}X_{n,i}=1.
\end{equation}
  The variables ${\bf X}_n = \{X_{n,i}\}_{i\geq 1}$ are the associated fractions to the distribution ${\bf P}_n$ on $\N$ and, by Proposition \ref{oneidentity_prop}, for $j\geq 1$,
\begin{equation}
P_{n,j}=X_{n,j}\prod_{i=1}^{j-1}(1-X_{n,i}) \ \ {\rm and \ \ } 1-\sum_{i=1}^{j-1}P_{n,j} = \prod_{i=1}^{j-1}\left(1-X_{n,i}\right).
\label{step2_equation}
\end{equation}

For $j\geq 0$, also define
\begin{equation}
S_j=n\left(1-\sum_{i=1}^j P_{n,j}\right) = n\prod_{i=1}^{j}\left( 1-X_{n,i}\right).
\label{S_def}
\end{equation}
In terms of the switching times ${\bf V}$, and the first time $N_n$ that the chain ${\bf T}$ switches after time $n$, we have $S_0 = n$, $S_j = V_{N_n -j}-1$ for $1\leq j\leq N_n-1$, and $S_j=0$ for $j\geq N_n$.  Recall also that $\tau_{n,j}= nP_{n,j}$ for $j\geq 1$.  In words, $\{S_j\}$ are the times before time $n$ at which the chain switches states when considered in reverse order, and $\{\tau_{n,j}\}$ are the lengths of the associated sojourns in the figure below.

\begin{center}
\begin{tikzpicture}
\draw[->] (0,0) -- (10,0);
\node at (3.5,.3){...};
\node at (3.5,-.7){...};
\draw (0,-2pt) -- (0,2pt)node[anchor=south] {1};
\draw (4.5,-2pt) -- (4.5,2pt) node[anchor=south]{$S_3$};
\draw (6,-2pt) -- (6,2pt) node[anchor=south]{$S_2$};
\draw (7.5,-2pt) -- (7.5,2pt) node[anchor=south]{$S_1$};
\draw (9,-2pt) -- (9,2pt) node[anchor=south]{$S_0=n$};
\draw [decorate,decoration={brace,amplitude=10pt}]
(9,0) -- (7.5,0) node [midway,yshift=-0.7cm]{$\tau_{n,1}$};
\draw [decorate,decoration={brace,amplitude=10pt}]
(7.5,0) -- (6,0) node [midway,yshift=-0.7cm]{$\tau_{n,2}$};
\draw [decorate,decoration={brace,amplitude=10pt}]
(6,0) -- (4.5,0) node [midway,yshift=-0.7cm]{$\tau_{n,3}$};
\end{tikzpicture}
\end{center}

\vskip .1cm
{\it Step 3.}  Recall the sequence ${\bf Y}_n$ given in \eqref{Y_eqn}, where $Y_{n,j} = T_{V_{N_n-j}}$ for $1\leq j\leq N_n -1$ and $Y_{n,i}= T_1$ for $i\geq N_n$.
We now aim to compute the finite dimensional distributions of $({\bf P}_n, {\bf Y}_n)$ or equivalently of $({\bf X}_n,{\bf Y}_n)$.  
To this end, fix the integer $r\geq 1$, and consider numbers $\{\beta_j\}_{j=1}^r\in(0,1)^r$ such that $s_j:=n\prod_{i=1}^j(1-\beta_i)\in\mathbb{N}$, for $1\leq j\leq r$, are all integers. Set also $s_0=n$ and recall $S_0=n$.  

Note from \eqref{step2_equation} and \eqref{S_def} that 
\begin{eqnarray*}
X_{n,j}=\beta_j \ {\rm for \ } 1\leq j\leq r &\Longleftrightarrow& S_j=s_j=n\prod_{i=1}^j(1-\beta_i) \ {\rm  for \ } 1\leq j\leq r\\
&\Longleftrightarrow& \tau_{n,j}=nP_{n,j}= s_{j-1}-s_j \ {\rm for \ }1\leq j\leq r.
\end{eqnarray*}

Then, with respect to a possible sequence ${\bf y}$,
we have 
\begin{align}
\label{step3_eqn}
 & \text{P}\left(X_{n,j}=\beta_j, Y_{n,j}=y_{j}: 1\leq j\leq r\right)\\
= & P\left(\tau_{n,j}=s_{j-1}-s_j,\ Y_{n,j}=y_j: 1\leq j\leq r\right)\nonumber\\
= & \sum_{\stackrel{y_{r+1}\in \mathcal{X}:}{ y_{r+1}\neq y_r}}P(T_{s_r}=y_{r+1})\prod_{j=1}^rP\left(T_{s_{j-1}}=\cdots= T_{s_j+1}=y_j\bigr\vert  T_{s_j}=y_{j+1}\right).\nonumber
\end{align}

Note the computation for $M\leq l<n$ and $z\neq y$,
\begin{align*}
&P(T_n=\cdots = T_{l+1}=y| T_l=z)\\
&\ \ \ \ \ \ \ \ \ \ \ =\frac{G_{z,y}}{l}\prod_{j=l+1}^{n-1}\left(1+\frac{G_{y,y}}j\right)=\frac{G_{z,y}}{l}f_l^{n-1}\left(-G_{y,y}\right).
\end{align*}
Recall also that $\mu^s_y= P\left(T_s = y\right)$.  Since $G = \theta(Q-I)$, we observe
\begin{align*}
\sum_{\stackrel{y\in \mathcal{X}:}{ y\neq z}} \mu^{s}_{y} G_{y,z}
&= \theta \sum_{\stackrel{y\in \mathcal{X}:}{ y\neq z}} \mu^{s}_{y} \left(Q-I\right)_{y,z}\\
&= \theta \sum_{\stackrel{y\in \mathcal{X}:}{ y\neq z}} \mu^{s}_{y} Q_{y,z}  = 
 \theta \left[ \big(\mu^{s}\big)^t Qe_{z} - \mu_z^{s}Q_{z,z}\right].
 \end{align*}

Then, \eqref{step3_eqn} equals
\begin{align}
\label{step3_last}
 & \sum_{\stackrel{y_{r+1}\in \mathcal{X}:}{y_{r+1}\neq y_r}}\mu^{s_r}_{y_{r+1}}\prod_{j=1}^r\frac{G_{y_{j+1},y_j}}{s_j}f_{s_j}^{s_{j-1}-1}\left(-G_{y_j,y_j}\right)\\
= & \left[(\mu^{s_r})^tQe_{y_r}-\mu^{s_r}_{y_r}Q_{y_r,y_r}\right]\frac\theta{s_r}f_{s_r}^{s_{r-1}-1}\left(-G_{y_r,y_r}\right)\prod_{j=1}^{r-1}\frac{G_{y_{j+1},y_j}}{s_j}f_{s_j}^{s_{j-1}-1}\left(-G_{y_j,y_j}\right)\nonumber
\end{align}

\vskip .1cm

{\it Step 4.}
We now sum the display \eqref{step3_last} over all appropriate values of $\{s_j\}_{j=1}^r$ such that $0<X_{n,j}\leq\beta_j$ for $1\leq j\leq r< N_n$, where we recall $N_n$ is the time the chain switches after time $n$.  Then, we have from \eqref{S_def} 
that
\begin{equation}
\label{step4_eqn}
1\leq \tau_{n,j}=nP_{n,j}=S_{j-1}-S_j = X_{n,j}S_{j-1}.
\end{equation}
Moreover, also from \eqref{S_def},
we have $s_r\geq n\prod_{j=1}^r(1-\beta_j)$ diverges to infinity as $n\rightarrow\infty$. 

Recall $s_0=n$ and $\lim_{n\rightarrow\infty}N_n=\infty$ a.s. Then, with equation \eqref{step4_eqn} in hand,
\begin{align*}
& P\left(0<X_{n,j}\leq\beta_j,\ Y_{n,j}=y_j: 1\leq j\leq r\right)\nonumber \\
&=P\left(1\leq \tau_{n,j} =S_{j-1}-S_j\leq S_{j-1}\beta_j,\ Y_{n,j}=y_j: 1\leq j\leq r\right) \nonumber \\
&=P\left(S_{j-1}(1-\beta_j)\leq S_j\leq S_{j-1} -1,\ Y_{n,j}=y_j: 1\leq j\leq r\right) \nonumber \\
&=\sum_{s_1=\lceil s_0(1-\beta_1)\rceil}^{s_0-1}\cdots\sum_{s_r=\lceil s_{r-1}(1-\beta_r)\rceil}^{s_{r-1}-1} \left[(\mu^{s_r})^tQe_{y_r}-\mu^{s_r}_{y_r}Q_{y_r,y_r}\right]\nonumber \\
&\ \ \ \ \ \ \ \times \frac\theta{s_r}f_{s_r}^{s_{r-1}-1}\left(-G_{y_r,y_r}\right)\prod_{j=1}^{r-1}\frac{G_{y_{j+1},y_j}}{s_j}f_{s_j}^{s_{j-1}-1}\left(-G_{y_j,y_j}\right).
\end{align*}

\vskip .1cm
{\it Step 5.}
From \eqref{S_def},
the sum index $s_r\geq n\prod_{j=1}^r(1-\beta_j)$ diverges to infinity as $n\rightarrow\infty$.  Also, by Lemma \ref{lem_weak}, we have $\lim_{s\rightarrow\infty}\mu^s_y = \mu_y$ and $\lim_{s\rightarrow\infty}\big(\mu^s\big)^tQe_y = \mu_y$ for each $y\in \X$.  Therefore, as $n\rightarrow\infty$, we have
$$\theta\left[\big(\mu^{s_r}\big)^tQe_{y_r} - \mu^{s_r}_{y_r}Q_{y_r, y_r}\right] \rightarrow \theta \mu_{y_r}(1-Q_{y_r, y_r}) = \mu_{y_r}\big(-G_{y_r, y_r}\big).$$

Note that $-G_{i,i}>0$ for each $i\in \X$ since by assumption $G$ has no zero rows. Thus, by Proposition \ref{f2}, we have
\begin{align}
\label{step4_limit}
 & \lim_{n\rightarrow\infty}P\left(0<X_{n,j}\leq\beta_j,\ Y_{n,j}=y_j: 1\leq j\leq r\right) \\
= & \mu_{y_r}(-G_{y_r,y_r})\prod_{j=1}^{r-1}G_{y_{j+1},y_j}\nonumber \\
 & \ \ \ \ \ \ \times \lim_{n\rightarrow\infty}\sum_{s_1=\lceil s_0(1-\beta_1)\rceil}^{s_0-1}\cdots\sum_{s_r=\lceil s_{r-1}(1-\beta_r)\rceil}^{s_{r-1}-1}\prod_{j=1}^r{s_j}^{-1}f_{s_j}^{s_{j-1}-1}\left(-G_{y_j,y_j}\right)\nonumber\\
= & \mu_{y_r}(-G_{y_r,y_r})\left[\prod_{j=1}^{r-1}G_{y_{j+1},y_j}\right]\left[\prod_{j=1}^r\left(-G_{y_j,y_j}\right)^{-1}\left(1-(1-\beta_j)^{-G_{y_j,y_j}}\right)\right].\nonumber
\end{align}

Hence, if $\mu_{y_k}=0$ for some $1\leq k\leq r$, by bounding say $P(0<X_{n,j}\leq \beta_j, Y_{n,j} = y_j: 1\leq j\leq r) \leq P(0<X_{n,j}\leq \beta_j, Y_{n,j}=y_j: 1\leq j\leq k)$, the limit \eqref{step4_limit} vanishes.  Now, suppose that $\{y_j\}_{j=1}^r$ is such that $\mu_{y_j}>0$ for each $1\leq j\leq r$.  We may write the limit \eqref{step4_limit} as
\begin{align*}
& \mu_{y_1}\left[\prod_{j=1}^{r-1}\frac{\mu_{y_{j+1}}}{\mu_{y_j}}\frac{G_{y_{j+1},y_j}}{-G_{y_j,y_j}}\right]\left[\prod_{j=1}^r\left(1-(1-\beta_j)^{-G_{y_j,y_j}}\right)\right]\\
= & \mu_{y_1}\left[\prod_{j=1}^{r-1}\frac{G'_{y_j,y_{j+1}}}{-G'_{y_j,y_j}}\right]\left[\prod_{j=1}^r\left(1-(1-\beta_j)^{-G'_{y_j,y_j}}\right)\right],
\end{align*}
decomposed as a product of (a) the transition probability of the chain ${\bf Y}'$, with kernel $K_{G'}$ (cf. \eqref{MCcGEM_kernel}) and initial distribution $\mu$, running through states $\{y_j\}_{j=1}^r$, and of (b) the distribution functions of independent Beta$(1,-G'_{y_j, y_j})$ random variables for $1\leq j\leq r$.  Hence, the finite dimensional distributional convergence of $({\bf P}_n, {\bf Y}_n)$ as $n\rightarrow\infty$ is established. \qed

\subsection{Proof of Theorem \ref{occ cor}:  Occupation laws to MCcGEM and stick-breaking measures}
\label{proof_occ_cor1}
Consider the pairs $\{({\bf P}_n, {\bf Y}_n)\}_{n\geq 1}$, $({\bf P}', {\bf Y}')$ and $({\bf P}^+, {\bf T}')$ in the setting of Theorems \ref{inhom to MCcGEM} and \ref{occ cor}.  These objects belong to $[0,1]^\N\times \mathcal{X}^\N$.  We now discuss the topology on this space and its relatives, before going to the proof of \eqref{occ_cor_eqn} in Subsection \ref{proof_occ_cor}.
\medskip

\subsubsection{Topology} We endow the space $[0,1]^\mathbb{N}$ with a standard product metric $\rho^1$ and $\sigma$-field, generated in terms of this metric, which yields the usual product $\sigma$-field built from the Borel $\sigma$-fields on copies of $[0,1]$:
For 
$p,p'\in[0,1]^\mathbb{N}$,
$$\rho^1(p,p')=\sum_{n=1}^\infty2^{-n}|p_j-p'_j|.$$

Consider now the metric $\rho$ on 
$[0,1]^\mathbb{N}\times \mathcal{X}^\mathbb{N}$ defined as follows:
For $(p,y), (p',y')\in [0,1]^\N\times \mathcal{X}^\N$,
$$\rho((p,y),(p',y'))=\sum_{n=1}^\infty2^{-n}\left[|p_j-p'_j|+|y_j-y'_j|\right].$$
The corresponding $\sigma$-field on $[0,1]^\mathbb{N}\times \mathcal{X}^\mathbb{N}$, generated by $\rho$, is the usual product $\sigma$-field formed from the Borel $\sigma$-fields on copies of $[0,1]$ and $\mathcal{X}$.  Importantly, weak convergence of probability measures on $[0,1]^\N\times \X^\N$ translates to finite dimensional convergence of these laws.  Moreover, $([0,1]^\N\times \X^\N, \rho)$ is a complete, separable metric space.

Recall that $\Delta_\infty$ is the collection of all probabilities on $\N$:
$$\Delta_\infty=\left\{p\in[0,1]^{\mathbb{N}}:\sum_{j=1}^\infty p_j=1\right\}.$$
Since
$$\Delta_\infty=\bigcap_{n=1}^\infty\bigcap_{M=1}^\infty\bigcup_{m=M}^\infty\left\{p\in[0,1]^{\mathbb{N}}:1-\frac1n\leq\sum_{j=1}^mp_j\leq1+\frac1n\right\},$$
$\Delta_\infty\times \mathcal{X}^{\mathbb{N}}$ is a measurable set in $[0,1]^{\mathbb{N}}\times \mathcal{X}^{\mathbb{N}}$. 
We may endow $\Delta_\infty\times \mathcal{X}^\mathbb{N}$ with the restriction of the metric $\rho$ and the $\sigma$-field generated from the associated metric topology.  

For a fixed point $(p',y')\in\Delta_\infty\times \mathcal{X}^{\mathbb{N}}$, the projection map
$f:[0,1]^{\mathbb{N}}\times \mathcal{X}^{\mathbb{N}}\rightarrow\Delta_\infty\times \mathcal{X}^{\mathbb{N}}$, given by
$$f(p,y)=\left\{\begin{array}{cc}(p,y) & :(p,y)\in\Delta_\infty\times \mathcal{X}^{\mathbb{N}}\\ (p',y') & :(p,y)\notin\Delta_\infty\times \mathcal{X}^{\mathbb{N}}\end{array}\right.,$$
is measurable, and also continuous on the subset $\Delta_\infty\times \mathcal{X}^{\mathbb{N}}$. 

Now, denote the collection of probabilities on $\mathcal{X}$,
$$\Delta_\X=\left\{p\in[0,1]^\X:\sum_{l\in \X}p_l=1\right\},$$
and endow it with the metric $\rho^2(p,p') = \sum_{n\geq 1} 2^{-n} |p_n - p'_n|$,  and the associated Borel $\sigma$-field.
Define
$g:\Delta_\infty\times \X^{\mathbb{N}}\rightarrow\Delta_\X$ by
$$g((p,y))=\left\langle\sum_{j=1}^\infty p_j\mathbbm{1}_l(y_j): l\in \X\right\rangle.$$
Then, $g$ is a continuous and therefore measurable function on $\Delta_\infty\times \X^\N$:  Indeed, if $\{(p^n, y^n)\}_{n\geq 1}$ and $(p,y)$ belong to $\Delta_\X\times \X^\N$, and the finite dimensional convergence $(p^n,y^n)\rightarrow (p,y)$ holds, for each $l\in \X$, we have $\sum_{j\geq A} p^n_j\mathbbm{1}_l(y^n_j) \leq \sum_{j\geq A}p^n_j = 1-\sum_{j<A}p^n_j \stackrel{n\rightarrow\infty}{\longrightarrow} 1-\sum_{j<A}p_j$.  The claim now follows since (1) $\sum_{j<A}p^n_j \mathbbm{1}_l(y^n_j) \stackrel{n\rightarrow\infty}{\longrightarrow} \sum_{j<A}p_j\mathbbm{1}_l(y_j) \stackrel{A\rightarrow\infty}{\longrightarrow} g((p,y))$, and (2) $\sum_{j\geq A}p_j \stackrel{A\rightarrow\infty}{\longrightarrow} 1$.

\subsubsection{Proof of \eqref{occ_cor_eqn}}  
\label{proof_occ_cor}
First, we verify that the pairs $\{({\bf P}_n,{\bf Y}_n)\}_{n\geq 1}$,  $({\bf P}', {\bf Y}')$ and $({\bf P}^+, {\bf T}')$ belong almost surely to $\Delta_\infty\times \X^N$.  Clearly, $\{({\bf P}_n,{\bf Y}_n)\}_{n\geq 1}$ surely lives in 
$\Delta_\infty\times\X^{\mathbb{N}}$ by construction.  Also,  $({\bf P}', {\bf Y}')$ and $({\bf P}^+, {\bf T}')$ lie almost surely in $\Delta_\infty\times\X^{\mathbb{N}}$ since, by Theorem \ref{inhom to MCcGEM} and the assumptions of Theorem \ref{occ cor}, we have that ${\bf P}'$ and ${\bf P}^+$ are RAMs, and so
$\sum_{j=1}^\infty P'_j\stackrel d=\sum_{j=1}^\infty \hat P^+_j\stackrel{a.s.}=1$.

Now, from the finite dimensional or in other words weak convergence of $({\bf P}_n, {\bf Y}_n)$ to $({\bf P}', {\bf Y}')$ in Theorem \ref{inhom to MCcGEM}, we have $\nu_n=g\big(({\bf P}_n, {\bf Y}_n)\big)= g\circ f\big(({\bf P}_n, {\bf Y}_n)\big)$ converges weakly to $\nu = g\circ f\big(({\bf P}', {\bf Y}')\big)$ by the continuous mapping theorem, and so the left equality in \eqref{occ_cor_eqn} holds.  

On the other hand, with respect to $({\bf P}^+, {\bf T}')$, define ${\bf P^{+,V}}$ and ${\bf Y^+}$ as in the setting of Theorem \ref{Gem to MCcGEM}.  Recall that ${\bf T}'$ is a Markov chain with kernel $Q' = I + G'/\theta$ and initial stationary distribution $\mu$.  Then, by Theorem \ref{Gem to MCcGEM}, noting that $G' = \theta(Q'-I)$, we have that $({\bf P^{+,V}},  {\bf Y^+})$ has a MCcGEM$(G')$ distribution.  Hence, $({\bf P^{+,V}},  {\bf Y^+}) \stackrel d= ({\bf P}', {\bf Y}')$.  Since almost surely, by `unclumping',
$$g\circ f\big(({\bf P^{+,V}},  {\bf Y^+})\big) = g\circ f\big(({\bf P}^+, {\bf T}')\big) = \sum_{j\geq 1}P^+_j1_j(T'_j),$$
 we have
$g\circ f\big(({\bf P}', {\bf Y}')\big) \stackrel d= g\circ f\big(({\bf P}^+, {\bf T}')\big)$, and the right equality of \eqref{occ_cor_eqn} holds.
\qed

\subsection{Proof of Theorem \ref{occ cor2}:  Stick-breaking measures to Occupation laws}

The claim follows from Theorem \ref{occ cor} once we verify that a homogeneous Markov chain with kernel $\tilde Q$ and a homogeneous Markov chain with kernel $(\tilde Q')'=I+(\tilde G')'/\theta$, each with initial distribution $\mu$, are equivalent in distribution.

To this end, for any generator matrix $\tilde G=\theta(\tilde Q-I)$ and associated stationary distribution $\mu$, we observe that $(\tilde G')'_{ij}=\tilde G_{ij}$ when $\mu_i$ and $\mu_j$ are both positive:
$$(\tilde G')'_{ij}=\frac{\mu_j}{\mu_i}\tilde G'_{ji}\mathbbm{1}(\mu_i\neq 0)=\frac{\mu_j}{\mu_i}\frac{\mu_i}{\mu_j}\tilde G_{ij}\mathbbm{1}(\mu_i\neq 0)\mathbbm{1}(\mu_j\neq 0)=\tilde G_{ij}\mathbbm{1}(\mu_i\neq 0)\mathbbm{1}(\mu_j\neq 0).$$
Since $\tilde Q = I + \tilde G/\theta$ and $(\tilde Q')' = I + (\tilde G')'/\theta$, we conclude that $\tilde Q_{ij}=(\tilde Q')'_{ij}$ when $\mu_i$ and $\mu_j$ are both positive.

Finally, as $\mu$ is a stationary distribution, $\mu$ is only positive on positive recurrent states and for each recurrence class of $\tilde Q$, $\mu$ either assigns $0$ weight to each state in that class or strictly positive weights to each state in that class.  Hence, homogeneous Markov chains with kernels $\tilde Q$ and $(\tilde Q')'$, starting from $\mu$, are equal in distribution. \qed

\subsection{Proof of Theorem \ref{selfsimgem}: Type of self-similarity}
\label{self_similar}
We first give a proof of Lemma \ref{unique}, before going to the main argument in Subsection \ref{self_sim_comp}

\subsubsection{Proof of Lemma \ref{unique}}
Let $\{(\eta_j,X_j)\}_{j\geq 1}$ be i.i.d. copies of $(\eta,X)$, independent of $(\eta,X)$, all on a common probability space.

\vskip .1cm
{\it Existence:} Let $\chi(\cdot)=\sum_{j=1}^\infty \eta_j(\cdot)X_j\prod_{i=1}^{j-1}(1-X_i)$. Since $\P(X=0)<1$, we have $\prod_{j\geq 1}(1-X_j) =0$ a.s., and so $\big\langle X_j\prod_{i=1}^{j-1}(1-X_i):{j\geq 1}\big\rangle$ is a RAM.  Hence, $\chi$ is a random probability measure on ${\mathcal A}$ as $\chi({\mathcal A})=\sum_{j=1}^\infty X_j\prod_{i=1}^{j-1}(1-X_i)\stackrel{a.s.}=1$.  Moreover, \eqref{self_eqn} holds straightfowardly:
$$\chi=X_1\eta_1+(1-X_1)\left[\sum_{j=2}^\infty \eta_j(\cdot)X_j\prod_{i=2}^{j-1}(1-X_i)\right]\stackrel d=X_1\eta_1+(1-X_1) \tilde \chi,$$
where $\tilde \chi$ has the same law as $\chi$ and is independent of $(X_1,\eta_1)$.
\vskip .1cm

{\it Uniqueness:} Suppose $\chi^a$ and $\chi^b$ both satisfy the self-similarity equation \eqref{self_eqn}.  On a probability space, where $\{(\eta_j, X_j)\}_{j\geq 1}$, $\chi^a$ and $\chi^b$ are independent, define a sequence of measures:  $\chi^a_1=\chi^a$, $\chi^b_1=\chi^b$ and, for $j\geq 1$,
$$\chi^a_{j+1}=X_j\eta_j+(1-X_j)\chi^a_j \ \ {\rm and \ \ } \chi^b_{j+1}=X_j\eta_j+(1-X_j)\chi^b_j.$$
By construction, $\{\chi^a_j\}_{j\geq 1}$ and $\{\chi^b_j\}_{j\geq 1}$ are two sequences of identically distributed random measures distributed as $\chi^a$ and $\chi^b$ respectively.

We note again that $\prod_{j\geq 1}(1-X_j)=0$ a.s. as $\P(X=0)<1$.  Then, in terms of the variational norm $\|\cdot\|$,
\begin{align*}
&\left\| \chi^a_{j+1}- \chi^b_{j+1}\right\|  =\left|1-X_j\right|\left\| \chi^a_j-  \chi^b_j\right\|\\
 &\ \ \ \  =\left[\prod_{i=1}^j\left|1-X_i\right|\right]\left\|\chi^a_1- \chi^b_1\right\|
  \leq \prod_{i=1}^j\left|1-X_i\right|,
\end{align*}
which vanishes a.s. as $j\rightarrow\infty$.  Hence, 
$\chi^a\stackrel d=\chi^b$.
\qed

\medskip
\subsubsection{Completion of the proof of Theorem \ref{selfsimgem}}
\label{self_sim_comp}
Recall our conventions at the beginning of Section \ref{results_sect} and that ${\bf X}=\{X_j\}_{j\geq 1}$ is a collection of 
iid variables, and ${\bf T}$ is the homogeneous Markov chain with kernel $Q$ and initial distribution $\mu$ supported on recurrent states.  Let ${\bf P}= \langle P_j: j\geq 1\rangle$ be the RAM constructed from ${\bf X}$.  For each 
recurrent 
state $i$ of $Q$, let ${\bf T}^{i}={\bf T}\bigr|T_1=i$ be the Markov chain with transition kernel $Q$ and initial value $T^{i}_1=i$.
Recall the a.s. finite time 
$W^i=\inf\{l>1:T_l^i=i\}$, and variable 
\begin{align}
\label{ss_1}
X^i  &=\sum_{l=1}^{W^i-1}X_l\prod_{n=1}^{l-1}(1- X_n) = \sum_{l=1}^{W^i-1}P_l = 1-\prod_{l=1}^{W^i-1}(1-X_l).
\end{align}

Recall also
$
\eta^i  =\left(X^i\right)^{-1}\sum_{l=1}^{W^i-1}\left[ X_l\prod_{n=1}^{l-1}(1- X_n)\right]\delta_{T_l^i}$.
  
We now rewrite the measure 
$\nu^i = \nu\bigr|T_1=i$ as follows:
\begin{align}
\label{self_eqn1}
\nu^i &= \sum_{l\geq 1}P_l\delta_{T^i_l}\nonumber\\
&= \sum_{l=1}^{W^i-1}P_l\delta_{T^i_l} + \sum_{l\geq W^i} P_l \delta_{T^i_l} \nonumber\\ 
& = X^i\eta^i + \big(1-X^i\big)\sum_{l\geq W^i} \frac{P_l}{1-X^i}\delta_{T^i_l}.
\end{align}

Then, by \eqref{ss_1} and Proposition \ref{oneidentity_prop} for $j\geq 1$ we have
\begin{align*}
\frac{P_{j-1+W^i}}{1-X^i} &= \frac{X_{j-1+W^i}\prod_{l=1}^{j-1+W^i-1}(1-X_l)}{\prod_{l=1}^{W^i-1}(1-X_l)}\\
&= X_{j-1+W^i}\prod_{l=W^i}^{j-1+W^i-1}(1-X_l)\ = \ 
 X_{j-1+W^i}\prod_{l=1}^{j-1}(1-X_{l-1 + W^i}).
\end{align*}
Hence, as ${\bf X}$ is composed of iid variables, independent of ${\bf T}^i$ and therefore $W^i$, we see that
$$\big\langle \frac{P_{j-1+W^i}}{1-X^i} = X_{j-1+W^i}\prod_{l=1}^{j-1}(1-X_{l-1+W^i}): j\geq 1\big\rangle \stackrel{d}{=} \big\langle X_j\prod_{l=1}^{j-1}(1-X_l): j\geq 1\big\rangle = {\bf P}.$$
 
Clearly, as the chain starts over again at location $i$, $\{T^i_l\}_{l\geq W^i}\stackrel {d}{=} {\bf T}^i$.

Moreover, by conditioning on the value of $W^i$ and noting that ${\bf X}$ and ${\bf T}^i$ are independent, the sequences $\big\langle \frac{P_{j-1+W^i}}{1-X^i}: j\geq 1\big\rangle$ and $\{T^i_l\}_{l\geq W^i}$ are independent.  Similarly, we see that the sum $\sum_{l\geq W^i} \frac{P_l}{1-X^i}\delta_{T^i_l}$, which depends only on variables $\{X_k\}_{k\geq W^i}$ and $\{T^i_k\}_{k\geq W^i}$ indexed beyond the first cycle, is independent of the pair $(X^i, \eta^i)$.  In particular, the sum $\tilde\nu^i:=\sum_{l\geq W^i} \frac{P_l}{1-X^i}\delta_{T^i_l}\stackrel{d}{=} \nu^i$.

Hence, from these observations, \eqref{self_eqn1} represents the sought after self-similarity equation \eqref{self_similarity equation}.

Finally, a distribution $\nu^i$ satisfying \eqref{self_similarity equation} is unique by Lemma \ref{unique} since $X^i_1\in (0,1]$ a.s.  Also, by assumption, $T_1\sim \mu$
where $\mu$ is supported only on recurrent states.  Therefore, as $T_1$ necessarily is a recurrent state, the distribution of the pair $(\nu, T_1)$ is also unique. \qed

\subsection{Proof of Theorems \ref{momentchain}, \ref{moments_thm} and Corollary \ref{marginalmom}: Recasting moments I, II, and marginals}

We prove these results in succession.

\subsubsection{Proof of Theorem \ref{momentchain}}

First, since $G$ is a $k\times k$ generator matrix with bounded entries and for large enough $j\in\mathbb{N}$
$$
\widetilde K_j = \left(I -\frac{G}{j}\right)^{-1} 
= \sum_{n=0}^\infty \frac{G^n}{j^n},
$$
we verify that $\widetilde K_j = K_j + O(j^{-2})$

Next, to show \eqref{newocc}, we relate the occupation law of the Markov chain $\mathbf{Z}$, with transition kernels $\{\widetilde K_n\}$, to the occupation law $\nu$ of the Markov chain $\mathbf{T}$, with kernels $\{K_n\}$, through a Borel-Cantelli argument.  In passing, we note this could be also accomplished via an analytic argument.

Define $A_j:=\tilde K_j-K_j$, for $j\geq 1$, and note $A_j=O(j^{-2})$ has constant row sums of $0$. Since $G$ does not have $0$ entries and $K_j=I+\frac{G}{j}{\bf 1}(j>M)$, there exists an $a$ such that $R_j:=K_j+\frac{j^2}a A_j$ is a non-negative matrix, and hence stochastic. Note
\begin{equation}
\label{K j eq}
\tilde K_j=\left(1-\frac a{j^2}\right)K_j+\frac a{j^2}R_j.
\end{equation}

Consider now an auxilliary sequence of independent Bernoulli$(aj^{-2})$ variables ${\bf B}= \{B_j\}_{j\geq 1}$ by possibly enlargening the probability space.  Define a process ${\bf Z'}\bigr |{\bf B}$ with $Z'_1\bigr|{\bf B} \sim \mu$ and
\begin{align*}
 & \P\big(Z'_{j+1}=z_{j+1}\big |Z'_l=z_l:1\leq l\leq j, {\bf B}\big)\\
&\  =  \P\big(Z'_{j+1}=z_{j+1}\big |Z'_j=z_j, B_j\big) =  (1-B_j)K_j(z_j,z_{j+1})+B_jR_j(z_j,z_{j+1}).
\end{align*}
Then, noting \eqref{K j eq},
marginally, ${\bf Z}'$
is a Markov chain with initial distribution $\mu$ and transition kernel
\begin{align*}
&\P\big(Z'_{j+1}=z_{j+1}\big | Z'_j=z_j\big)\\
&\ \ \  = K_j(z_j, z_{j+1})\P(B_j = 0) + R_j(z_j, z_{j+1})\P(B_j=1) \\
&\ \ \  =  \tilde K_j.
\end{align*}

Now, by Borel Cantelli lemma, $\P(B_j=1\text{ i.o.})=0$ and so $L:=\max\{j:B_j=1\}<\infty$ a.s.  Conditional on the event that $\{L=r\}$, the chain $\{Z'_j\}_{j>r}$ is a Markov chain with transition kernels $\{K_j\}_{j>r}$.    
Also, since $G$ is irreducible in the setting of \cite{DS}, the initial distribution does not matter in the calculation of the occupation law $\nu$ (cf. Remark 3 in Subsection \ref{remarks subsection}).  Hence, the occupation law with respect to ${\bf Z}$ is also $\nu$ and \eqref{newocc} holds:  Indeed, for $l\in \X$ and interval $A=(a,b)$ for $0<a<b<1$, we have
\begin{align*}
&\lim_{n\rightarrow\infty}\P\Big( \frac{1}{n}\sum_{j=1}^n \mathbbm{1}(Z_j = l) \in A\Big)\\
&\ \  = \lim_{n\rightarrow\infty}\P\Big( \frac{1}{n}\sum_{j=1}^n \mathbbm{1}(Z'_j = l) \in A\Big)\\
&\ \ =\lim_{R\rightarrow\infty}\lim_{n\rightarrow\infty}\P\Big( \frac{1}{n}\sum_{j=1}^n \mathbbm{1}(Z'_j = l) \in A \ {\rm and \ } L<R\Big) + o(1)_R\\
&\ \ = \lim_{n\rightarrow\infty}\P\Big( \frac{1}{n}\sum_{j=1}^n \mathbbm{1}(T_j = l) \in A \Big) \ = \ \nu(l),
\end{align*}
where $o(1)_R$ is an expression which vanishes uniformly in $n$ as $R\rightarrow\infty$.

Finally, \eqref{newmoments} follows straightforwardly by gathering together terms.
\qed

\subsubsection{Proof of Theorem \ref{moments_thm}}
We break the argument into steps.
\vskip .1cm

\textit{Step 1.} 
First, we show that $p_j(\lambda)$, $q(j)$, and their quotients are all well-defined. 
A generator matrix $G$ can always be written as $G=\theta(Q-I)$ for some $\theta>0$ and a stochastic matrix $Q$.  The eigenvalues $\lambda$ of $Q$ correspond with the eigenvalues $\theta(\lambda-1)$ of $G$.  Additionally, since $G$ has no zero entries, $Q$ is irreducible.  Therefore, the algebraic multiplicity of the eigenvalue $0$ of $G$ is $1$. Thus, with respect to the minimal polynomial of $G$, $p_{min}(\lambda)$, there exists a polynomial $q$ such that $p_{min}(\lambda)=\lambda q(\lambda)$ and $q(0)\neq 0$.

Define 
$$\theta^G=\min\left\{\theta\in\mathbb{R}^+:I+G\theta^{-1}\text{ is non-negative}\right\}.
$$
  Since the eigenvalues of the stochastic matrix $I+G/\theta^G$ are bounded by $1$, the (complex) eigenvalues $\tilde\lambda$ of $G$ satisfy $\left|1+ \tilde\lambda/ \theta^G\right|\leq1$.  Hence, the eigenvalues of $G$ have non-positive real part.  Since $p_{min}(\lambda)=\lambda q(\lambda)$ and $q(0)\neq 0$, we obtain that $j\in \N$ is not an eigenvalue of $G$ and so $q(j)\neq 0$ for $j\geq 0$. Thus, $p_j(\lambda)/q(j)$ is well-defined for $j\geq 0$.
\vskip .1cm

\textit{Step 2.} We now verify for $j>0$ that 
$$\widetilde K_j = (I-G/j)^{-1}=\frac{p_j(G)}{q(j)}.$$
Write
\begin{align*}
(j-\lambda)p_j(\lambda) & =\sum_{i=0}^n\lambda^i\sum\limits_{l=i}^n a_l j^{l+1-i}-\sum_{i=0}^n\lambda^{i+1}\sum\limits_{l=i}^n a_l j^{l-i}\\
 & =\sum_{i=0}^n\lambda^i\sum\limits_{l=i}^n a_l j^{l+1-i}-\sum_{r=1}^{n+1}\lambda^r\sum\limits_{l=r-1}^n a_l j^{l+1-r}\\
 & =\sum_{l=0}^na_lj^{l+1}-\sum_{r=1}^{n+1}\lambda^r a_{r-1}\ =\ j q(j)-\lambda q(\lambda).
\end{align*}
In particular, as $Gq(G)=p_{min}(G)=0$, we have
$$I = \frac{jq(j)I-Gq(G)}{jq(j)} = \frac{(jI-G)p_j(G)}{jq(j)} = \left(I-\frac Gj\right)\frac{p_j(G)}{q(j)},$$
from which the desired identity follows.

\vskip .1cm
\textit{Step 3:} We now show that $p_0(G)/q(0)$ is the constant matrix with rows $\mu$.   Note that $p_0(\lambda)/q(0)=q(\lambda)/q(0)$ is well-defined in \eqref{q equation}.  Since row sums of $G^k$ vanish for $k\geq 1$, we see that $p_j(G)/q(j)$ has constant row sums of $p_j(0)/q(j)=1$.  Now, necessarily, $p_0(G)G=0$ as $q(\lambda)\lambda = p_{min}(\lambda)$ is the minimal polynomial of $G$.  Since $G$ is irreducible, we can conclude that $p_0(G)$ is a matrix with rows given by multiples of the unique stochastic eigenvector $\mu$ associated to $G$ and eigenvalue $0$.  However, since $p_0(G)/q(0)$ has row sums equal to $1$, the claim follows.

Moreover, noting that $[p_0(G)/q(0)]_{i,j} = \mu_j$ for any $i,j\in \X$, the moment identity \eqref{alg_mom} is now a direct consequence of these calculations. \qed

\subsubsection{Proof of Corollary \ref{marginalmom}}
Recall $q(\lambda) = \sum_{i=0}^n a_i \lambda^i$ is a degree $n\leq k-1$ polynomial where $a_n\neq 0$.  Then, noting \eqref{q equation}, we see that $p_j(\lambda)$ is also degree $n$ polynomial in $j$ with $\lambda$-free leading coefficient $a_n$. In particular, $[p_j(G)]_{i,i}$ is a degree $n$ polynomial in $j$ with leading coefficient $a_n I_{i,i}=a_n$ for each $i\in\mathcal{X}$. 

Now, fix $i$, and denote by $\{\gamma_{i,l}\}_{l=1}^n$ and $\{\lambda_l\}_{l=1}^n$ the roots of $[p_j(G)]_{ii}$ and $q(j)$ respectively when considered as functions of $j$. In the formula \eqref{alg_mom}, to calculate $\E[\nu_i^N]$, there is only one list in ${\mathbb S}(N)$, namely one composed of $N$ $1$'s.  Then,
\begin{align*}
&\E\left[\nu_i^N\right]  =\prod_{j=0}^{N-1}\left[\frac{p_j(G)}{q(j)}\right]_{i,i}=\prod_{j=0}^{N-1}\frac{\left[p_j(G)\right]_{i,i}}{q(j)}\\
&\ \ =\frac{\prod_{j=0}^{N-1}\prod_{l=1}^n(j-\gamma_{i,l})}{\prod_{j=0}^{N-1}\prod_{l=1}^n(j-\lambda_l)}=\frac{\prod_{l=1}^n\prod_{j=0}^{N-1}(-\gamma_{i,l}+j)}{\prod_{l=1}^n\prod_{j=0}^{N-1}(-\lambda_l+j)}  = \prod_{l=1}^n\frac{\left(-\gamma_{i,l}\right)_N}{\left(-\lambda_l\right)_N},
\end{align*}
as desired. \qed

\medskip
{\bf Acknowledgement.} 
We thank J. Sethuraman for enjoyable conversations on Dirichlet processes.
Part of this research was supported by ARO W911NF-14-1-0179, and a Simons Foundation Sabbatical grant.

\end{document}